\numberwithin{equation}{section}
\theoremstyle{plain}
\newtheorem{thm}{Theorem}[section]
\newtheorem{lem}[thm]{Lemma}
\newtheorem{prop}[thm]{Proposition}
\newtheorem*{main}{Main Theorem}
\theoremstyle{definition}
\newtheorem{defn}[thm]{Definition}
\newtheorem{rem}[thm]{Remark}
\def\Aut{\operatorname{Aut}}
\def\Hom{\operatorname{Hom}}
\def\Ind{\operatorname{Ind}}
\def\Re{\operatorname{Re}}
\def\Tr{\operatorname{Tr}}
\def\beq{\begin{equation}}
\def\eeq{\end{equation}}
\def\beqn{\begin{equation*}}
\def\eeqn{\end{equation*}}
\def\beqna{\begin{eqnarray}}
\def\eeqna{\end{eqnarray}}
\def\beqnan{\begin{eqnarray*}}
\def\eeqnan{\end{eqnarray*}}
\def\vi{\varphi}
\def\mc{\mathcal}
\def\mf{\mathfrak}
\def\bs{\backslash}
\def\As{\mathrm{As}}
\def\A{\mathbb{A}}
\def\As{\mathscr{A}}
\def\CC{\mathbb{C}}
\def\RR{\mathbb{R}}
\def\col{\coloneqq}
\def\co{\colon}
\def\ss{\subseteq}
\def\S{\mathcal{S}}
\def\aa{\mathfrak{a}}
\def\la{\langle}
\def\ra{\rangle}
\def\hra{\hookrightarrow}
\def\bs{\backslash}
\def\1{\Eins}
\def\varddots{\mathinner{\mkern1mu
    \raise\p@\hbox{.}\mkern2mu\raise4\p@\hbox{.}\mkern2mu
    \raise7\p@\vbox{\kern7\p@\hbox{.}}\mkern1mu}}
\newcommand{\BIGOP}[1]{\mathop{\mathchoice%
{\raise-0.22em\hbox{\huge $#1$}}
{\raise-0.05em\hbox{\Large $#1$}}{\hbox{\large $#1$}}{#1}}}
\newcommand{\BIGboxplus}{\mathop{\mathchoice%
{\raise-0.35em\hbox{\huge $\boxplus$}}%
{\raise-0.15em\hbox{\Large $\boxplus$}}{\hbox{\large $\boxplus$}}{\boxplus}}}
\title{Regularized Periods and the global Gan--Gross--Prasad conjecture : The case of $U(n+2r) \times U(n)$}
\author{Jaeho Haan}
\address{Department of Mathematics
Yonsei University
50 Yonsei-Ro, Seodaemun-Gu, Seoul 03722, Republic of Korea}
\email{jaehohaan@gmail.com}
\keywords{Gan-Gross-Prasad conjecture,  Arthur truncation, regularized periods, Rankin-Selberg $L$-function, central $L$-values}
\date{\today}
\begin{document}

\begin{abstract}In this paper, we introduce regularized trilinear periods on certain non-reductive groups. It has two direct applications. Firstly, it enables us to define the regularized Bessel periods and the regularized Fourier-Jacobi periods for all classical and metaplectic groups. Secondly, by using the properties of the regularized Fourier-Jacobi periods, we can prove one direction of the global Gan--Gross--Prasad conjecture on skew-hermitian unitary groups for tempered cases.
\end{abstract}

\maketitle

\setcounter{tocdepth}{1}
\tableofcontents

\section{\textbf{Introduction}}
\subsection{General introduction}
The study of special $L$-values is a pivotal domain in modern number theory because these values encode a considerable amount of arithmetic information. Among all other special values, central $L$-values are of particular interest due to the implications of the Birch and Swinnerton-Dyer conjecture and its generalizations.

In 1992, Gross and Prasad \cite{GP} proposed a fascinating conjecture regarding the relationship between automorphic period integrals and central $L$-values of certain tensor product $L$-functions on special orthogonal groups. Later, in 2012, they, together with Gan \cite{Gan2}, extended this conjecture to all classical groups and metaplectic groups. This extended conjecture is now referred to as the global \emph{Gan--Gross--Prasad} (GGP) conjecture.

The global GGP conjecture involves two types of periods: the Bessel periods and the Fourier--Jacobi periods. The Bessel periods pertain to automorphic forms on orthogonal groups or hermitian unitary groups, while the Fourier--Jacobi periods relate to automorphic forms on metaplectic-symplectic groups or skew hermitian unitary groups. We will now briefly recapitulate the definitions of both periods, focusing at least on (skew) hermitian unitary groups.

Let $E$ and $F$ be quadratic extensions of number fields with adele rings $\A_E$ and $\A_F$, respectively. Consider the nontrivial quadratic character $\omega$ of $F^{\times} \backslash \A_F^{\times}$ associated with $E/F$ through global class field theory. Choose a unitary character $\mu$ of $E^{\times} \bs \A_E^{\times}$ such that its restriction to $\A_F^{\times}$ is $\omega$. Occasionally, we regard $\mu$ as a character of $GL_n(\A_E)$, in which case it refers to $\mu \circ \det$. Fix a nontrivial continuous character $\psi$ of $F \bs \A_F$. For $\epsilon \in \{\pm\}$, let $W_m \subset W_n$ be non-degenerate $m$ and $n$-dimensional $\epsilon$-Hermitian spaces over $E$ satisfying:
\begin{itemize}
\item $\epsilon \cdot (-1)^{\text{dim}W_m^{\perp}}=-1$;
\item $W_m^{\perp}$ in $W_n$ is a split space.
\end{itemize}

Define $G_n$ and $G_m$ as the isometry groups of $W_n$ and $W_m$ respectively. Consider $G_m$ as a subgroup of $G_n$ that acts trivially on the orthogonal complement of $W_m$ within $W_n$. Let $X$ be a maximal isotropic subspace of $W_m^{\perp}$, and let $P$ be the parabolic subgroup of $G_n$ defined as the stabilizer of a complete flag of subspaces within $X$. Write $n-m=2r+1$ if $n-m$ is odd, and $n-m=2r$ if $n-m$ is even. Denote by $N_{n,r}$ the unipotent radical of $P$, on which $G_m$ acts through conjugation. Let $H=N_{n,r} \rtimes G_m$. 
In the case where $n-m=2r+1$, there exists an essentially unique generic character $\chi':N_{n,r}(\A_F) \bs N_{n,r}(\A_F) \to \CC^{\times}$ stabilized by $G_m(\A_F)$. This character can be extended to a character of $H(F) \bs H(\A_F)$.

When $n-m=2r$, there exists a global generic Weil representation denoted as $\nu_{\psi^{-1},\mu^{-1},W_m}$ of $H(\A_F)$. This representation is realized on the Schwartz space $\mc{S} = \mc{S}(Y(\A_F))$, where $Y$ is a Lagrangian $F$-space of $\mathrm{Res}_{E/F}W_m$.

For each $f \in  \mc{S}$, the associated  theta series is defined by 
\[
\Theta_{\psi^{-1},\mu^{-1},W_m}(h ,f)=\sum_{x\in Y(F)} \left( \nu_{\psi^{-1},\mu^{-1},W_{m}}(h)f \right) (x), 
\quad
h \in H(\A_F).
\]
The space of theta series provides an automorphic realization of $\nu_{\psi^{-1},\mu^{-1},W_m}$ that is invariant under the action of $H(F)$. Moving forward, we will denote $G(F) \bs G(\mathbb{A}_F)$ as $[G]$ for any algebraic group $G$ defined over $F$.

Consider two irreducible cuspidal automorphic representations, $\pi_1$ and $\pi_2$, of $G_n(\A_F)$ and $G_m(\A_F)$ respectively. We view $H$ as a subgroup of $G_n$ through the mapping $(n,g) \mapsto ng$. Depending on whether $n-m$ is odd or even, we define the \emph{Bessel periods} and the \emph{Fourier--Jacobi periods} for $\pi_1,\pi_2$ as the following integrals:
\begin{itemize}

\item If $n-m$ is odd, for $\vi_1 \in \pi_1, \vi_2 \in \pi_2$, the Bessel period for $\vi_1,\vi_2$ is defined by
\[\mathcal{B}_{\chi'}(\vi_1,\vi_2) \col \int_{ [ N_{n,r} \rtimes G_m]}\vi_1(ng)\vi_2(g)\chi'^{-1}((n,g))dndg.
\]

\item If $n-m$ is even, for $\vi_1 \in \pi_1, \vi_2 \in \pi_2, f\in \nu_{\psi^{-1},\mu^{-1},W_m}$, the Fourier--Jacobi period $\vi_1,\vi_2$ is defined by 
\[
\mathcal{FJ}_{\psi,\mu}(\vi_1,\vi_2,f) \col \int_{ [ N_{n,r} \rtimes G_m]}\vi_1(ng)\vi_2(g)\Theta_{\psi^{-1},\mu^{-1},W_m}\left((n,g),f\right)dndg.
\]
\end{itemize}
\par

The GGP conjecture \cite[Conjecture 24.1]{Gan2} predicts that for an irreducible tempered cuspidal representation $\pi_1 \boxtimes \pi_2$ of $G_n \times G_m$, the following two conditions are equivalent in each case.
In what follows, $BC$ stands for the standard base change and the $L$-functions are the Rankin-Selberg $L$-functions.

\begin{itemize}
\item \textbf{Bessel case}
\begin{enumerate}
\item $\mathcal{B}_{\chi'}$ is non-vanishing,

\item $\text{Hom}_{H(F_v)}(\pi_{1,v} \boxtimes \pi_{2,v},\CC)\ne0$ for all places $v$ and $L(\frac{1}{2},BC(\pi_1) \times BC(\pi_2))\ne0.$
\end{enumerate}
\par
\end{itemize}
\par

\begin{itemize}
\item \textbf{Fourier-Jacobi case}
\begin{enumerate}
\item $\mathcal{FJ}_{\psi,\mu}$ is non-vanishing,

\item $\text{Hom}_{H(F_v)}(\pi_{1,v} \boxtimes \pi_{2,v} \boxtimes \nu_{\psi^{-1}_{v},\mu^{-1}_{v},W_m},\CC)\ne0$ for all places $v$ and $L(\frac{1}{2},BC(\pi_1) \times BC(\pi_2) \otimes \mu^{-1})\ne0$.
\end{enumerate}
\par
\end{itemize}

\par
In their influential paper \cite{Ich}, Ichino--Ikeda formulated a refined GGP conjecture expressing the Bessel periods explicitly
in terms of special values for orthogonal groups of co-rank 1, i.e., the $SO(n+1) \times SO(n)$ case. Thereafter, Liu \cite{L}  extended their conjecture to higher co-rank cases for both orthogonal groups and hermitian unitary groups and Xue \cite{Xue} formulated the corresponding refined conjecture for metaplectic-symplectic groups. The refined GGP conjecture implies the (i) $\to$ (ii) direction of the original GGP conjecture.

After the formulation of both the original and refined GGP conjectures, there has been huge progress towards the low co-rank (i.e., co-rank $\le 1$) GGP conjectures until recently. Wei Zhang was the first who made a breakthrough toward the low co-rank GGP conjectures. Using the relative trace formula of Jacquet and Rallis \cite{JR}, he proved the original and refined GGP conjectures for the co-rank $1$ hermitian unitary groups, i.e., $U(n+1) \times U(n)$ under two local assumptions (\cite{W1,W2}). Shortly thereafter, following a similar idea of W. Zhang, H. Xue \cite{Xue1, Xue2} proved the original and refined GGP conjectures for the co-rank $0$ skew-hermitian unitary groups, i.e., $U(n) \times U(n)$, under the similar local assumptions. Those two local assumptions arise when one applies a simple version of the Jacquet--Rallis relative trace formula. The first condition has now been completely dropped in a recent stunning work of R. Beuzart-Plessis, Y. Liu, W. Zhang and X. Zhu \cite{BLZZ}, which proves the original and refined GGP conjectures for $U(n+1) \times U(n)$ in the stable case. However, to remove the second condition, which forces $BC(\pi_1) \boxtimes BC(\pi_2)$ to be cuspidal, was regarded very hard for a long time because it would require the fine spectral expansion of the  Jacquet--Rallis relative trace formula. Subsequently, in a brilliant contribution \cite{BPCZ}, R. Beuzart-Plessis, Pierre-Henri  Chaudouard and M. Zydor settled the original GGP conjecture and refined GGP conjectures for $U(n+1) \times U(n)$ across all endoscopic cases circumventing the fine spectral expansion of the  Jacquet--Rallis relative trace formula. Including the powerful tools developed in \cite{BLZZ}, their proof combines many results concerning  the  Jacquet--Rallis relative trace formula in a very clever way. As a consequence, the original and refined GGP conjectures for $U(n+1) \times U(n)$ have been completely settled. However, it is not clear whether such an approach using the Jacquet--Rallis's relative trace formula is available for other classical groups other than unitary groups.
\par

On the other hand, there has been a different approach towards the low co-rank GGP conjectures. Inspired by the construction of the regularized period integral of Jacquet--Lapid--Rogawski \cite{JLR, LR}, Ichino--Yamana \cite{IY} and Yamana \cite{Y} invented regularized period integrals for co-rank 1 hermitian unitary groups, i.e. $U(n+1) \times U(n)$ and for co-rank 0 metaplectic--symplectic groups, i.e. $Mp(2n) \times Sp(2n)$, respectively. By computing the regularized period  involving certain residual Eisenstein series, they proved the (i) $\to$ (ii) direction of the original GGP conjecture in such low co-rank cases without any local assumptions. It is also worthwhile to mention that the Ichino--Yamana's mixed truncation for $GL(n+1) \times GL(n)$ (resp. $U(n+1) \times U(n)$) invented in \cite{IY1} (resp. \cite{IY}) was also used in \cite{BPCZ}.

\par For the higher co-rank (i.e., co-rank $> 1$) GGP conjecture, there has been a significant advancement in the Bessel case recently. In their seminal paper, under some hypothesis, Jiang and Zhang \cite[Theorem 5.7]{JZ} proved the (i) $\to$ (ii) direction of the original full GGP conjecture in all cases and the (ii) $\to$ (i) direction in some special cases for both orthogonal groups and hermitian unitary groups. Their method crucially uses the \emph{reciprocal non-vanishing of Bessel periods} \cite[Theorem 5.3]{JZ} to establish the global non-vanishing of the twisted automorphic descent which extends the automorphic descent developed by Ginzburg--Rallis--Soudry \cite{gjr}. Since the proof of \cite[Theorem 5.3]{JZ} heavily depends on the unproven general calculation \cite[Theorem 4.8]{JZ} of the unramified local zeta integrals defined using the local Bessel functionals, their proof of the (i) $\to$ (ii) direction of the GGP conjecture is also conditional on such hypothetical computation. Though some special cases of \cite[Theorem 4.8]{JZ} were carried out in several papers, for example, \cite{GPSR}, \cite{JZ3}, \cite{S-ICM}, \cite{S-I}, \cite{S-II}, the general computation, which is required to prove \cite[Theorem 5.7]{JZ}, does not seem to be easy. (As written in \cite{JZ}, since 2018, Jiang, Zhang and Soudry have been preparing a paper \cite{JSZ}, which contains the computation in full generality for quasi-split classical groups. Their preprint, however, has not appeared yet and so \cite[Theorem 5.7]{JZ} is still conditional.) We also mention  that Furusawa--Morimoto \cite{MF1},\cite{MF2} recently proved the original and refined GGP conjecture for certain special irreducible tempered representations of $SO(2n+1) \times SO(2)$.

In the Fourier--Jacobi case, on the other hand,  the most well-known result for the higher co-rank GGP conjecture is the one by Ginzburg--Jiang--Rallis on metaplectic-symplectic groups \cite{GJR} which has been recently supplemented with \cite{GJBR}. In that paper, by combining the Arthur truncation method and the Rankin-Selberg method, they proved the (i) $\to$ (ii) direction of the original GGP conjecture for arbitrary even co-rank metapletic-symplectic groups, i.e., $Mp(2n+2r) \times Sp(2n)$ under a stability assumption. The assumption forces the Langlands functorial transfer of $\pi_1 \boxtimes \pi_2$ to their corresponding general linear groups to be cuspidal. Since this assumption is critical in their argument, it appears hard to remove it while adopting their approach. (See the remarks after \cite[Theorem 5.7]{JZ}.) On the other hand, it is plausible to apply the similar approach used in \cite{JZ} to the Fourier--Jacobi case. Indeed, as it is written in \cite{JZ1}, Jiang and Zhang have been preparing a paper \cite{JZ2} concerning the Fourier--Jacobi case for metapeltic-symplectic groups since 2015.  However no preprint seems available yet, and in any case, the results would be  conditional on the non-existent results in \cite{JSZ} if they use the same method as in \cite{JZ}.

In contrast to the metaplectic-symplectic case, very little is known for non-equal rank skew-hermitian unitary groups. To the best of the author's knowledge, Gelbart--Rogawski's paper \cite{GR} which studied the Fourier-Jacobi period of $U(3) \times U(1)$ is the only work that deals with non-equal rank skew-hermitian groups. \par

The main goal of this paper is to prove the (i) $\to$ (ii) direction of the original GGP conjecture for skew-hermitian unitary groups, i.e., $U(n+2r) \times U(n)$ without any hypotheses. To do this, we first define regularized trilinear periods which involve an integration over a unipotent subgroup. 
This enables us to construct the regularized Bessel and Fourier-Jacobi periods for all classical and metaplectic groups. In particular, when $r=0$, these are exactly the same regularized periods introduced in \cite{IY} and \cite{Y}, respectively. By exploiting several properties of regularized Fourier--Jacobi periods, we prove the following:

\begin{main} \label{main}
Let $n-m=2r$ for some non-negative integer $r$ and $\pi_1, \pi_2$ be irreducible cuspidal automorphic representations 
of $G_n(\A_F)$ and $G_m(\A_F)$ with generic $A$-parameters, respectively. 
If there are $\vi_1\in \pi_1, \vi_2 \in \pi_2$ and $f\in  \nu_{W_{m}}$ 
such that \[\mathcal{FJ}_{\psi,\mu}(\vi_1,\vi_2,f)\ne 0,\] 
then $L(\frac{1}{2},BC(\pi_1) \times BC(\pi_2)\otimes \mu^{-1})\ne 0$.
\end{main}

We remark on the proof of the main theorem.

\begin{rem}\label{ec} Based on the works of Arthur (\cite{Ae}), Mok (\cite{Mok}) and Kaletha--Minguez--Shin--White (\cite{KMSW}) on the endoscopic classification for unitary groups, the irreducible cuspidal representations $\pi_1,\pi_2$ with generic $A$-parameters have the following properties.

\begin{enumerate}
\item The weak base changes $BC(\pi_1)$ and $BC(\pi_2)$ exist
\item $BC(\pi_2)$ is decomposed as the isobaric sum $\sigma_1 \boxplus \cdots \boxplus \sigma_t$, where $\sigma_i$'s are irreducible unitary cuspidal automorphic representations of general linear groups such that the (twisted) Asai $L$-function $L(s,\sigma_i,As^{(-1)^{m-1}})$ has a pole at $s=1$ for all $1\le i\le t$.
\item For each place $v$ of $F$ and $1\le i \le t$, the local normalized intertwining operator $N_v(z)$ for $\sigma_{i,v} \times \pi_{1,v}$ is holomorphic and nonzero for all $z$ with $\Re(z) \ge \frac{1}{2}$.
\item Let $\pi$ be either $\pi_1$ or $\pi_2$. Then for some place $v$ of $F$, $\pi_v$ is the unique unramified subquotient of some principal series representations 
$\Ind_{{B_k'}}^{G_k}
(\chi_{1} \boxtimes \cdots  \boxtimes \chi_{[\frac{k}{2}]} )$ such that  $\chi_i=\chi_i'\cdot |\cdot|^{s_i}$, where $\chi_i'$'s are unramified unitary characters and $0\le s_i<\frac{1}{2}$.
\end{enumerate}

In (iii), the local normalized intertwining operator $N_v(z)$ is constructed by multiplying the local intertwining operator $M_v(z)$ for the representation $\sigma_{i,v} \times \pi_{1,v}$ of $GL_a(F_v) \times G_n(F_v)$ (see \cite[Sect 4.1]{Sha0}) with the following 
\[\frac{L_v(s+1,\sigma_{i,v} \times \pi_{1,v})\cdot L_v(2s+1,\sigma_{i,v}, As^{(-1)^{n-1}})\cdot \epsilon_v(s,\sigma_{i,v} \times \pi_{1,v},\psi_v)\cdot \epsilon_v(2s,\sigma_{i,v},As^{(-1)^{n-1}},\psi_v) }{L_v(s,\sigma_{i,v} \times \pi_{1,v})\cdot L_v(2s,\sigma_{i,v},As^{(-1)^{n-1}})},\]
where the local $L$-factors and $\epsilon$-factors are defined through the localization of the global $A$-parameter of $\pi_{1}$ as described in \cite{KMSW} and \cite{Mok}. Then the property (iii) mentioned above follows from \cite[Theorem~B.2]{JZ}.

It is known that globally generic cuspidal representations of unitary groups also satisfy the above properties (see \cite{KM1}, \cite{KM2}.) Therefore, our main theorem would also hold for such globally generic cuspidal representations (possibly non-tempered because of the absence of the generalized Ramanujan conjecture.)
In the course of proving our main theorem, we use the above properties in a crucial manner. 

\begin{rem}The proof of our main theorem, to some extent, is in the same vein with \cite{GJR} in that it uses the relation of non-vanishing of central $L$-values with the existence of the residual Eisenstein series. However, comparing with the proof of \cite{GJR}, there are two main innovations in our approach. First, to compute the Fourier--Jacobi period integrals involving residual Eisenstein series, Ginzburg, Jiang and Rallis \cite{GJR} used the Arthur truncation method to render meaning to a divergent integral involving Eisenstein series. In their approach using the Arthur truncation, the stability assumption on $\pi_1, \pi_2$ becomes indispensable. However, if we use the mixed truncation method to compute the regularized Fourier--Jacobi period involving residual Eisenstein series, we don't need such a stability assumption on $\pi_1,\pi_2$. 

Secondly, though we took a different truncation from G--J--R's, in order to relate the Fourier--Jacobi periods of residual Eisenstein series with the original Fourier--Jacobi periods, we reach the almost same place in G--J--R's, that is, \cite[Proposition 5.3]{GJR}. (Compare with Lemma \ref{l3}). To prove the direction (i) $\to$ (ii) of the GGP conjecture, G--J--R employed the isobaric sum of $FL(\pi_1)$ (the Langlands functorial transfer of $\pi_1$). However, this choice of the isobaric sum makes the proof of \cite[Proposition 5.3]{GJR} very difficult. Around 2013, some experts found a serious gap in the original proof of \cite[Proposition 5.3]{GJR}. Thereafter, in 2019, G--J--R, together with B. Liu, uploaded their corrected proof of \cite[Proposition 5.3]{GJR} to arXiv (\cite{GJBR}.)
But their corrected proof is rather involved and roundabout. In this paper, we overcome such difficulty by using the isobaric sum of $BC(\pi_2)$ instead of $BC(\pi_1)$. With this choice of the isobaric sum, we can avoid the difficulty arising in the proof of \cite[Proposition 5.3]{GJR} and streamline the discussion substantially. (See Remark \ref{ir} for a more detailed explanation.)
\end{rem}

\begin{rem}Theorem \ref{rthm} bears a resemblance to \cite[Theorem 5.3]{JZ}, which pertains to the reciprocal non-vanishing of Bessel periods. However, it is worth noting that while the proof of \cite[Theorem 5.3]{JZ} indispensably relies on the assumption regarding the unramified calculation of specific local zeta integrals, we establish Theorem \ref{rthm} without recourse to such an assumption. Furthermore, it's noteworthy that Theorem \ref{rthm} harbors potential utility in establishing the direction (ii) to (i) within the GGP conjecture, as demonstrated in \cite[Cor. 6.11]{JZ}.
\end{rem}
\end{rem}



This paper is organized as follows. In Section 2, we establish some general notations and in Section 3, we give a definition of automorphic forms on certain non-reductive groups modeled after Jacobi forms. In Section 4, we explain the GGP triplets to which our mixed truncation is designed and in Section 5, we define the mixed truncation operator which maps an automorphic form on a certain non-reductive group to a rapidly decreasing automorphic form on a reductive group. Using this mixed truncation, we can define the regularized trilinear period integrals and the regularized Bessel and Fourier--Jacobi periods in Section 6. In Section 7, after introducing the Jacquet module corresponding to the Fourier--Jacobi character, we prove a lemma which is required to prove a vanishing property of regularized periods involving certain residual Eisenstein series. In Section 8, we explain the definition of (residual) Eisenstein series and review \cite[Proposition 5.3]{IY} which manifests the correlation of analytic properties of Eisenstein series with the behavior of the associated Rankin-Selberg $L$-functions. In Section 9, we prove various lemmas required to establish Theorem \ref{rthm}, which claims the reciprocal non-vanishing of the Fourier--Jacobi periods. Guided by Theorem \ref{rthm}, the culmination of our endeavor is the proof of our main theorem, presented in Section 10.

We stress that separate approaches need to be taken to prove Lemma \ref{Jac} and for Lemma \ref{l1} according to whether $r=0$ or $r>0$ because the relevant twisted Jacquet modules are totally different. Threrefore, we cannot reduce the proof of our main theorem to a single one for all ranks. This is in contrast to the case of the local GGP conjecture, where one can reduce the local GGP for $r>0$ to that of $r=0$ (cf. \cite[Theorem 19.1]{Gan2}).

Upon the completion of this paper, we became aware that Gan, Gross and Prasad extended their original conjecture to non-tempered cases in their recent paper. In \cite[Conjecture 9.1]{Gan1}, they formulated a comprehensive global (resp. local) GGP conjecture for automorphic representations within global (resp. local) $A$-packets. However, automorphic representations in non-tempered $A$-packet need not necessarily be cuspidal. Therefore, for the formulation of the all-encompassing GGP conjecture including non-tempered representations, one needs to regularize both $\mathcal{B}_{\chi'}$ and $\mathcal{FJ}_{\psi,\mu}$. For this reason, they formulated \cite[Conjecture 9.1]{Gan1} assuming the existence of certain regularized period integrals in each case. With the introduction of our regularized Bessel and Fourier--Jacobi period integrals as outlined in Definition \ref{rdef}, we are now equipped to present their overarching conjecture in a more explicit manner (as outlined in Remark \ref{well}). We anticipate that the properties of the established regularized periods expounded in this paper will play a pivotal role in substantiating the general GGP conjecture.

\subsection{Some related problems} The methods used in this paper may be applicable to other problems. We discuss some of them here.

\begin{enumerate}

\item 
In \cite{H3}, following a method similar to that of the current paper, we were able to establish the (i) $\to$ (ii) direction of the original GGP conjecture for higher co-rank metaplectic-symplectic groups without relying on the stability assumption found in \cite{GJR}. Given that we've introduced the concept of regularized Bessel periods in this paper, it's plausible that a similar approach could be employed to address the Bessel cases as well. Notably, our approach avoids the necessity of calculating unramified local zeta integrals, thereby potentially enhancing the outcome achieved by D. Jiang and L. Zhang in \cite[Theorem 5.7]{JZ}. Further details on this topic can be found in our subsequent work \cite{H1}.

\item In \cite{BPCZ}, R. Beuzart-Plessis, P-H. Chaudouard and M. Zydor calculated a certain term appearing in the coarse spectral expansion of the Jacquet--Rallis trace formula for linear groups in terms of the Rankin-Selberg periods of Eisenstein series and the Flicker--Rallis intertwining periods. In their computation, the Ichino--Yamana's mixed trunction for ($GL(n+1) \times GL(n)$) in \cite{IY} is crucially used to define certain truncated integrals involving a relevant kernel function and Eisenstein series. Since this computation is one of the key innovation of their paper, to extend their result to attack the higher co-rank GGP conjectue for unitary groups $(U(n+2r+1) \times U(n))$, the mixed truncation for ($GL(n+2r+1) \times GL(n)$) involving some unipotent subgroup seems to be necessary. Though such a mixed truncation for higher co-rank linear groups can also be defined similarly, its very definition and proof are slightly different from those of classical groups. For this reason, we deal with the construction of the mixed truncation for linear groups separately in our future work \cite{H4}.

\item We anticipate that our approach could also be applied to certain significant non-tempered instances of the GGP conjecture. In an upcoming work, \cite{H2}, we will delve into the investigation of regularized Bessel periods incorporating specific Eisenstein series. Our aim will be to establish the direction (i) $\to$ (ii) of the non-tempered GGP conjecture within this context.

\end{enumerate}
\subsection*{Acknowledgements} The author wishes to extend heartfelt gratitude to Professor Hiraku Atobe for suggesting this intriguing problem and for engaging in numerous beneficial discussions. Without his guidance, this work would not have come to fruition. The concept of automorphic forms on non-reductive groups in Section 3 originated from him. Furthermore, he also found critical errors in the proof of Lemma \ref{Jac} and Lemma \ref{l1} in the initial draft. It is with immense appreciation that we acknowledge his significant contributions to this paper.

A deep debt of gratitude is owed to Professor Shunsuke Yamana. The profound influence of his paper \cite{Y} on this work is evident. Aside from being the author of the inspiring paper that served as the foundation for this research, he generously answered numerous questions posed by the author regarding his work. We also wish to express our sincere thanks to Professor Erez Lapid for clarifying certain aspects of his proof concerning \cite[Lemma 8.2.1(1)]{LR}. To Professors Dipendra Prasad and Sug Woo Shin, we extend our thanks for their insightful comments on our draft.

Our appreciation extends to Professors Yeansu Kim and Sanghoon Kwon, with whom the author conducted a five-month reading seminar on the "Arthur trace formula." This seminar facilitated familiarity with the Arthur truncation and generated ideas on how to prove various lemmas in Section 5. We also acknowledge the valuable discussions with Dr. Wan Lee and Dr. Chulhee Lee.

We wish to extend our thanks to Professors Dongho Byeon, Youn-Seo Choi, Youngju Choie, Wee Teck Gan, Haseo Ki and Sug Woo Shin. Their patience, guidance and unwavering supports have been instrumental in the completion of this project.

Last but not least, we sincerely express our gratitude to Prof. Wei Zhang for his interest in the results of this paper, and to the referee for their thorough review and invaluable comments, which undoubtedly enhanced the clarity of the paper's presentation.

Most part of this paper was written when the author was a postdoc at KIAS. We extend our gratitude to KIAS and Yonsei University for providing an excellent working environment and generous support. The author was supported by KIAS Individual Grant(SP037002) at Korea Institute for Advanced Study, the POSCO Science Fellowship of POSCO TJ Park Foundation and the National Research Foundation of Korea (NRF) grant funded by the Korea government (MSIT) (No. 2020R1F1A1A01048645).

\section{\textbf{General notation}}
In this section, we establish and adhere to a set of general notations and conventions that will be used throughout the paper.

Let $F$ be a number field with adele ring $\A_F$. For a place $v$ of $F$, denote by $F_v$ the localization of $F$ at $v$. 
Let $G$ be a connected reductive algebraic group over $F$ and fix a minimal $F$-parabolic subgroup $P_0$ of $G$ with a Levi decomposition $P_0=M_0U_0$. We call any $F$-parabolic subgroup $P=MU$ of $G$ which contains $P_0$ \emph{standard}. 
Fix a maximal compact subgroup $K=\prod_v K_v$ of $G(\A_F)$ which satisfies $G(\A_F)=P_0(\A_F)K$ and for every standard parabolic subgroup $P=MU$ of $G$, 
\[ P(\A_F)\cap K=(M(\A_F) \cap K)(U(\A_F)\cap K)\]
and $M(\A_F) \cap K$ is still maximal compact in $M(\A_F)$ (see \cite[I.1.4]{Mo}). 

We give the Haar measure on $U(\A_F)$ so that the volume of $[U]$ is 1 and give the Haar measure on $K$ so that the total volume of $K$  is $1$. 
We also choose Haar measures on $M(\A_F)$ for all Levi subgroups $M$ of $G$ compatibly 
with respect to the Iwasawa decomposition.

Let $\textbf{G}$ be a topological group that is a finite central covering of $G(\A_F)$, i.e., there exists a surjective morphism $pr_G:\textbf{G} \to G(\A_F)$, whose kernel is a finite subgroup of the center of $\textbf{G}$ and $pr_G$ a topological covering. 
It is known that $U_0(\A_F)$ lifts canonically into $\textbf{G}$  (see \cite[Appendix I]{Mo}) and a result of Weil \cite{We} says that $G(F)$ lifts uniquely into $\textbf{G}$. 
For the unipotent subgroup $U$ of any standard parabolic subgroup of $G$ (or a subgroup $S$ of $G(F)$), we use the same notation $U(\A_F)$ (or $S$) to denote their image of such lifting. For a subgroup $J$ of $G(\A_F)$, we use the boldface $\textbf{J}$ to denote $pr_G^{-1}(J)$. Note that parabolic subgroups of $\textbf{G}$ are $\textbf{P}=\textbf{M}U(\A_F)$. Sometimes, we view a function defined on $J$ as a function on $\textbf{J}$ by composing them with $pr_G$.  

 Let $\As_P(\textbf{G})$ be the space of automorphic forms on $U(\A_F) P(F) \backslash \textbf{G}(\A_F)$, 
i.e., smooth, $\textbf{K}$-finite and $\mathfrak{z}$-finite functions on $U(\A_F) P(F) \backslash \textbf{G}(\A_F)$ 
of moderate growth, 
where $\mathfrak{z}$ is the center of the universal enveloping algebra of the complexified Lie algebra 
of $G(F\otimes_{\mathbb{Q}}\mathbb{R})$.
When $P=G$, we simply write $\As(\textbf{G})$ for $\As_G(\textbf{G})$.
For a cuspidal automorphic representation $\rho$ of $\textbf{M}$, 
write $\As_P^{\rho}(\textbf{G})$ for the subspace of functions $\phi \in \As_P(\textbf{G})$ 
such that for all $k \in \textbf{K}$, 
the function $m \to \delta_P(m) ^{-\frac{1}{2}} \cdot  \phi(mk)$ belongs to the space of $\rho$. 
(Here, $\delta_P$ is the modulus function of $\textbf{P}$ (see \cite[I.2.17]{Mo}).)
\par

\section{\textbf{Automorphic forms on $N \rtimes \textbf{G}'$}}
In this section, we extend the definition of automorphic forms from reductive groups to certain non-reductive groups. 
Let $G' $ be a connected reductive group defined over $F$. We fix a minimal $F$-parabolic subgroup $P_0'$ of $G'$ and a maximal compact subgroup $K'=\prod_v K_v'$ of $G'(\A_F)$. We will adopt similar notations for subgroups of $G'$ by appending a prime $^{\prime}$ to those of $G$. Let $N$ be a unipotent $F$-group which admits a $G'$-action 
in the category of algebraic groups over $F$. 
We denote this action by $\sigma \colon G' \to \Aut(N)$. Using $\sigma$, we form the semi-direct product $N(\A_F) \rtimes \textbf{G}'$ and introduce automorphic forms on $N(\A_F) \rtimes \textbf{G}'$, generalizing the properties of Jacobi forms as presented in \cite[Proposition 4.1.2]{Sch}).

Let $| \cdot |$ denote a norm function on $G'(\A_F)$, as described in \cite[Section I.2.2]{Mo}. We will adopt the same notation $| \cdot |$ to represent the norm function on $\textbf{G}'$ obtained through the projection $pr_{G'}$. 
\par

\begin{defn} Let $P'=M'U'$ be a parabilic subgroup of $G'$. For a function $\varphi \colon N(\A_F) \rtimes \textbf{G}' \to \mathbb{C}$, we say that $\varphi$ is 
an \textit{$P'$-automorphic form} 
if 

\begin{itemize}

\item 
$\varphi \left( (\delta,\gamma) \cdot (n,ug) \right) = \varphi(n,g)$ for $u \in U'(\A_F)$ and $(\delta,\gamma)\in N(F) \rtimes P'(F)$;
\item 
$\varphi$ is smooth;
\item 
$\varphi$ is of moderate growth;
\item 
$\varphi$ is right $\textbf{K}'$-finite; 

\item 
$\varphi$ is $\mathfrak{z}'$-finite.
\end{itemize}
Here, $\varphi$ is said to be \textit{of moderate growth} 
if there is a positive integer $m$ and a constant $C$ such that 
\[ |\varphi(n,g)|< C \|g\| ^{m}
\]
for all $(n,g)  \in N(\A_F) \rtimes \textbf{G}'$.

\end{defn}
We denote by $\As_{P'}(N \rtimes \textbf{G}')$ 
the space of automorphic forms on $N (\A_F) \rtimes \textbf{G}'$ satisfying the above properties. When $P'=G'$, we simply write $\As(N \rtimes \textbf{G}')$ for $\As_{G'}(N \rtimes \textbf{G}')$. Note that if $N$ is the trivial group $\bf{1}$, $\As_{P'}(\bf{1}$ $ \rtimes \ \textbf{G}')$ is just $\As_{P'}(\textbf{G}')$. 

For $\varphi \in \As(N \rtimes \textbf{G}')$, define $\varphi_{P'} \co N (\A_F) \rtimes \textbf{G}'\to \CC$ by 
\[ 
\varphi_{P'}(n,g) \col \int_{[U']} \varphi (n,ug)du \ \  \text{for} \  (n,g)\in N (\A_F) \rtimes \textbf{G}'.
\] It is obvious that $\varphi_{P'} \in \As_{P'}(N \rtimes \textbf{G}')$. If we regard $\varphi \in \As(\mathbf{1} \rtimes \textbf{G}')$ as $\varphi \in \As(\textbf{G}')$, then $ \varphi_{P'}$ is the usual constant term of $\varphi \in \As(\textbf{G}')$ along $P'$ defined by
\[
\varphi_{P'}(g)=\int_{[U']} \varphi(ug) du.
\] 
Like automorphic forms on $\textbf{G}'$, we can decompose automorphic forms on $N(\A_F) \rtimes \textbf{G}'$. To describe it precisely, we introduce some notations.

Write $X(G')$ for the group of $F$-rational characters of $G'$. 
Let $(\aa_0')^{*}$ be the $\mathbb{R}$-vector space spanned by the lattice $X(M_0')$ 
and $\aa_0'=\Hom((\aa_0')^{*},\RR)$ its dual space. 
The canonical pairing on $(\aa_0')^* \times \aa_0'$ is denoted by $\la \ , \ \ra$.
Let $\Delta_0'$ and $(\Delta_0')^{\vee}=\{ \alpha^{\vee} \co \alpha \in \Delta_0' \}$ 
be the sets of simple roots and simple coroots in $(\aa_0')^{*}$ and $\aa_0'$, respectively. 
Write $(\hat{\Delta}_0')^{\vee}$ and $\hat{\Delta}_0'$ for 
the dual bases of $\Delta_0'$ and $(\Delta_0')^{\vee}$, respectively. 
(In other words, $(\hat{\Delta}_0')^{\vee}$ and $\hat{\Delta}_0'$ are 
the set of coweights and weights, respectively.)
For a standard parabolic subgroup $P'=M_{P'}U_{P'}$ of $G'$, 
write $A_{M_{P'}}$ for the maximal split torus in the center of $M_{P'}$ 
and $\mathfrak{a}^{*}_{P'}=X(M_{P'}) \otimes_{\mathbb{Z}} \mathbb{R}$ and $\aa_{P'}$ for its dual space.
\par

Let $Q' \subset P'$ be a pair of standard parabolic subgroups of $G'$.
There is a canonical injection $\aa_{P'} \hookrightarrow \aa_{Q'}$ 
and a canonical surjection $\aa_{Q'} \twoheadrightarrow \aa_{P'}$ 
induced by two inclusion maps $A_{M_{P'}} \hra A_{M_{Q'}}$ and $M_{Q'} \hra M_{P'}$. 
So we have a canonical decomposition
\[
\aa_{Q'}=\aa_{Q'}^{P'} \oplus \aa_{P'}, \quad \quad \aa_{Q'}^{*}=(\aa_{Q'}^{P'})^* \oplus \aa_{P'}^{*}.
\]
In particular, if we take $Q'=P_0'$, we have a decomposition
\[
\aa_0'=(\aa_0')^{P'} \oplus \aa_{P'}, \quad \quad (\aa_0')^{*}=(\aa_0'^{P'})^* \oplus \aa_{P'}^{*}.
\]

Let $\Delta_{P'}$ be the subset of $\Delta_0'$ consisting of simple roots whose restriction to $\aa_{P'}^{*}$ is non-trivial. We denote by $\Delta_{Q'}^{P'}$ the subset of $ \Delta_{Q'}$ appearing in the root decomposition 
of the Lie algebra of the unipotent radical $U_{Q'} \cap M_{P'}$. 
Then for $H \in \aa_{P'}$, $\la \alpha ,H \ra =0$ for all $\alpha \in \Delta_{Q'}^{P'}$ 
and so $\Delta_{Q'}^{P'} \subset (\aa_{Q'}^{P'})^*$. 
Note that $\Delta_{P'}^{G'}=\Delta_{P'}$.
For any $\alpha \in \Delta_{Q'}^{P'}$, 
there is an $\tilde{\alpha}\in \Delta_0'$ whose restriction to $(\aa_{Q'}^{P'})^*$ is $\alpha$. 
Write $\alpha^{\vee}$ for the projection of $\tilde{\alpha}^{\vee}$ to $\aa_{Q'}^{P'}$.
Define 
\[
({\Delta}_{Q'}^{P'})^{\vee}=\{ \alpha^{\vee} |  \ \alpha \in \Delta_{Q'}^{P'} \}.
\] 
Define $\hat{\Delta}_{Q'}^{P'} \subset (\aa_{Q'}^{P'})^*$ and $(\hat{\Delta}^{\vee})_{Q'}^{P'} \subset \aa_{Q'}^{P'}$ 
to be the dual bases of $(\Delta_{Q'}^{P'})^{\vee}$ and $\Delta_{Q'}^{P'}$,  respectively. Thus, $\hat{\Delta}_{Q'}^{P'}$ is the set of simple weights and $(\hat{\Delta}^{\vee})_{Q'}^{P'}$  is the set of coweights.
We simply write $\hat{\Delta}_{P'}^{\vee}$ for $(\hat{\Delta}^{\vee})_{P'}^{G'}$ 
and $\hat{\Delta}_{P'}$ for $\hat{\Delta}_{P'}^{G'} $, respectively.
\par

Let $\tau_{Q'}^{P'}$ be the characteristic function of the subset  
\[ 
\{H \in \aa_0' : \la \alpha,H \ra >0 \text{ for all } \alpha \in \Delta_{Q'}^{P'}\} \subset \aa_0'
\]
and let $\hat{\tau}_{Q'}^{P'}$ be the characteristic function of the subset 
\[ 
\{H \in \aa_0' : \la \varpi,H \ra >0 \text{ for all } \varpi \in \hat{\Delta}_{Q'}^{P'}\} \subset \aa_0'.
\]
Since these two functions depend only on the projection of $\aa_0'$ to $\aa_{Q'}^{P'}$, 
we also regard them as functions on $\aa_{Q'}^{P'}$. 
For simplicity, write $\aa_{P'}$  for $\aa_{P'}^{G'}$ and $\tau_{P'}$, 
$\hat{\tau}_{P'}$ for $\tau_{P'}^{G'}$, $\hat{\tau}_{P'}^{G'}$, respectively.
\par

For each parabolic subgroup $P'=M'U'$ of $G'$, we have a height map
\[
H_{P'} : G'(\A_F) \to \aa_{P'}
\]
characterized by the following properties (see \cite[page 917]{A1}):
\begin{itemize}
\item 
$|\chi|(m)=e^{\la \chi, H_{P'}(m)\ra}$ for all $\chi \in X(M')$ and $m\in M'(\A_F)$;
\item 
$H_{P'}(umk)=H_{P'}(m)$ for all $u \in U'(\A_F), m\in M'(\A_F), k \in K'$.
\end{itemize}
\par

The restriction of $H_{P'}$ on $M'(\A_F)$ is a surjective homomorphism. 
For a pair of parabolic subgroups $Q' \ss P'$ of $G'$, 
it is easy to check that the projection of $H_{Q'}(g) \in \aa_{Q'}$ to $\aa_{P'}$ is $H_{P'}(g)$ for all $g \in G'(\A_F)$.
\par

Denote the kernel of $H_{P'} |_{M'(\A_F)}$ by $M'(\A_F)^1$ 
and the connected component  of $1$ in $A_{M'}(\RR)$ by $A_{M'}(\RR)^0$. 
Write $A_0'$ for $A_{M_0'}(\RR)^0$. 
Then $M'(\A_F)$ is the direct product of the normal subgroup $M'(\A_F)^1$ with $A_{M'}(\RR)^0$, 
and $H_{P'}$ gives an isomorphism between $A_{M'}(\RR)^0$ and $\aa_{P'}$. 
Denote  the inverse of this map by $X \to e^X$. We fix a Haar measure on $\aa_{P'}$.
Write $\rho_0'$ for the half sum of positive roots in $(\aa_{0}')^{*}$ 
and denote by $\rho_{P'}, \rho_{Q'}^{P'}$ the projection of $\rho_0'$ to $(\mathfrak{a}_{P'})^{*}, (\mathfrak{a}_{Q'}^{P'})^{*}$, 
respectively. Recall that $e^{2\la \rho_{P'} ,H_{P'}(p)\ra}=\delta_{P'}(p)$ for $p\in P'(\A_F)$. 
We use the similar notation for $G$ by removing the prime $'$.

Write $\textbf{M}'^1=pr_{G'}^{-1}(M'(\A_F))$. The following is a generalization of \cite[Lemma I.3.2]{Mo} and can be proved in the same way.
\begin{prop}

An automorphic form $\phi \in \As_{P'}(N \rtimes \textbf{G}')$ admits a finite decomposition 
\beq \label{dec}
\phi((n,ue^Xmk))=\sum_i Q_i(X) \phi_i((n,mk)) e^{\la \lambda_i+\rho_{P'},X \ra}
\eeq
for $u \in U'(\A_F), X\in \mathfrak{a}_{P'}, m \in \textbf{M}'^1$ and $k\in \textbf{K}'$, 
where $\lambda_i\in \mathfrak{a}_{P'}^* \otimes_{\RR} \CC$, $Q_i \in \CC[\mathfrak{a}_{P'}]$ 
and $\phi_i \in \As_{P'}(N \rtimes\textbf{G}')$ satisfies $\phi_i(n,e^Xg)=\phi_i(n,g)$ for $X \in \mathfrak{a}_{P'}$ and $g \in \textbf{G}'$.
\end{prop}
We denote the finite set of exponents $\lambda_i$ appearing in the above decomposition of $\phi$ by $\mathcal{E}_{P'}(\phi)$. When $N=\textbf{1}$, the above proposition is nothing but \cite[Lemma I.3.2]{Mo}.


As we shall see in the next section, we are mainly interested in the situation $N \rtimes G' \ss G$. In this case, we will consider the restriction of the constant terms of functions in $\As(\textbf{G}) \otimes \As(N \rtimes \textbf{G}')$ to $N(\A_F) \rtimes \textbf{G}'$. Therefore, we introduce the following notation to denote it.

For $\phi \in \As(\textbf{G}), \ \phi' \in \As(N \rtimes \textbf{G}')$ and a parabolic subgroup $\tilde{S}=S_1 \times S_2'$ of $G \times G'$,  a function $(\phi \otimes \phi')_{\tilde{S}}: N(\A_F) \rtimes \textbf{G}' \to \CC$ is defined by
\[ 
(\phi \otimes \phi')_{\tilde{S}}(n,g) =  \phi_{S_1}(ng) \phi'_{S_2'}(n, g).
\]

\section{\textbf{The GGP triplets}}
From now on, we focus on special triplets of algebraic groups $(G,G',N)$ which appear in the setting of the GGP conjecture. 

Let $E$ be either $F$ or a quadratic extension of $F$ with adele ring $\A_E$. For a place $v$ of $F$, denote by $E_v$ the localization of $E$ at $v$.  Fix a non-trivial additive character $\psi$ of $F \bs \A_F$ and the quadratic character $\omega$ of $\A_F^{\times}$ associated to $E/F$ by the global class field theory. (When $E=F$, $\omega$ is just the trivial character.) Fix a character $\mu$ of $E^{\times} \bs \A_E^{\times}$ such that $\mu|_{\A_F^{\times}}=\omega$. Write $|\cdot|_{E_v}$ for the normalized absolute valuation on $E_v$ and for $x\in \A_E^{\times}$, put $|x|_E=\prod_v |x|_{E_v}$. When $E=F$, we simply write $|\cdot|_{v}$ for $|\cdot|_{F_v}$ and $|\cdot|$ for $|\cdot|_F$. 
Occasionally, we regard $\psi$ as the character of $E \bs \A_E$ by composition with the trace map $\mathrm{Tr}_{E/F}$. We also view $\mu$ and $|\cdot |_E$ as the characters of $GL_n(\A_E)$ by composition with the determinant map. Sometimes, we denote $\mu \circ \det_{GL_n}$ by $\mu(n)$.

Fix $\epsilon \in \{\pm\}$. Let $( W_{n},(\cdot , \cdot)_n )$ be a non-degenerate $n$-dimensional $\epsilon$-Hermitian space over $E$ and $W_m$ its a $m$-dimensional subspace. Let $W_m^{\perp}$ be the orthogonal complement of $W_m$ in $W_n$ and $(\cdot , \cdot)_m$ be the restriction of $(\cdot , \cdot)_n$ to $W_m$. We further assume that $(\cdot , \cdot)_m$ is also non-degenerate and 

\begin{itemize}
\item $\epsilon \cdot (-1)^{\text{dim}W_m^{\perp}}=-1$;
\item $W_m^{\perp}$ in $W_n$ is a split space.
\end{itemize}

When $n-m=2r$,  the second condition means that there exist $r$-dimensional isotropic subspaces $X,X^{*}$ of $W_m^{\perp}$ such that $W_m^{\perp}=X  \oplus X^{*}$. When $n-m=2r+1$, it means that there exist $r$-dimensional isotropic subspaces $X,X^{*}$ and a non-isotropic hermitian line $E$ in $W_m^{\perp}$ such that $E$ is orthogonal to $X \oplus X^{*}$.

Let $k,k'$ be Witt indices of $W_n, W_m$, respectively. When $n-m=2r$ (resp. $n-m=2r+1$), it is easy to check $k = k'+r$ (resp. $k'+r\le k \le k'+r+1$). Let $G_n$ be the isometry group of $( W_{n},(\cdot , \cdot)_n )$ and $G_m$ that of $( W_{m}, (\cdot , \cdot)_m )$. We identify $G_m$ as the subgroup of $G_n$ which acts trivially on $W_m^{\perp}$. We fix a norm $\| \cdot \|_{G_n}$ on $G_n(\A_F)$ and define a norm $\| \cdot \|_{G_m}$ on $G_m(\A_F)$ by 
\[\| g' \|_{G_m} \col \| g' \|_{G_n}, \quad \quad g' \in G_m(\A_F). 
\]
Until section 6, we fix $n,m$ and use the notation $G$ and $G'$ to denote $G_n$ and $G_m$, respectively. 

We fix maximal totally isotropic subspaces $X_{k'}',Y'_{k'}$ of $W_m$ which are in duality with respect to $(\cdot , \cdot)_m$. We fix also their complete flags 
\begin{align*}
& 0=X_0' \subset X_1' \subset \cdots \subset X_{k'}',
\\&0=Y_0' \subset Y_1' \subset \cdots \subset Y_{k'}',
\end{align*}
in order that for all $1\le i \le k'$, $X_i'$ and $Y_i' $ are in duality with respect to $(\cdot , \cdot)_m$. Let $P_0'=M_0'U_0'$ be the parabolic subgroup of $G'$ defined as the stabilizer of the above complete flag of subspaces within $X_{k'}'$ with the Levi subgroup $M_0'$, which stabilizes the complete flag of subspaces within $Y_{k'}'$. Let $A_{M_0'}$ be the maximal split torus in $M_0'$ and call any $F$-parabolic subgroup $P'$ of $G'$ which contains $P_0'$ \emph{standard}. We also fix a maximal compact subgroup $K'=\prod_v K_v'$ of $G'(\A_F)$ which satisfies $G'(\A_F)=P_0'(\A_F)K'$ and for every standard parabolic subgroup $P'=M'U'$ of $G'$, 
\[ P'(\A_F)\cap K'=(M'(\A_F) \cap K')(U'(\A_F)\cap K')\]
and $M'(\A_F) \cap K'$ is still maximal compact in $M'(\A_F)$ (see \cite[I.1.4]{Mo}). 

Let $P(\overline{X_{k'}'})$ be the parabolic subgroup of $G$ defined as the stabilizer of the same complete flag of subspaces within $X_{k'}'$. Fix a complete flag $\overline{X}$ of subspaces within $X$ and let $N_{n,r}$ be the unipotent radical of the parabolic subgroup of $G$ defined as the stabilizer of $\overline{X}$. Let $P_0=M_0U_0$ be a minimal $F$-parabolic subgroup of $G$ contained in $P(\overline{X'_{k'}})$ and stabilizes $\overline{X}$. Note that $N_{n,r} \ss U_0$. We also fix a maximal compact subgroup $K$ of $G(\A_F)$ containing $K'$ as we choose $K'$ in $G'(\A_F)$.
Note that the map $P \to P \cap G'$ gives a bijection between the set of $F$-parabolic subgroups of $G$ that contain $P(\overline{X'_{k'}})$ and the set of standard parabolic subgroups of $G'$. Throughout the rest of the paper, a parabolic subgroup of $G$ (resp. a parabolic subgroup of $G'$) always means a standard parabolic subgroup of $G$ containing $P(\overline{X'_{k'}})$ (resp. a standard parabolic subgroup of $G'$). When $P$ is a parabolic subgroup of $G$, we use the letter $P'$ to denote $P \cap G'$ and vice versa. In particular, $\left(P(\overline{X'_{k'}})\right)'=P_0'$.

 Since $\aa_{0}'$ (resp. $\aa_{0}$) is isomorphic to $\RR^{k'}$ (resp. $\RR^{k}$), we regard functions on $\aa_{0}'$ as those of $\aa_{0}$ by composing them with the projection. Additionally, since both $P$ and $P'$ contain $P(\overline{X_{k'}'})$ and $P_0'$, respectively, $\aa_P$ and $\aa_{P'}$ are isomorphic. Therefore, we will identify them and establish fixed Euclidean norms $|\cdot |_{P}$ on $\aa_P$ and $|\cdot |_{P'}$ on $\aa_{P'}$ so that they become the same through the identification of $\aa_{P}$ and $\aa_{P'}$.

Let $I$ be either $G$ or $G'$. Let $I(\A_F)^1=\cap_{\chi \in X(I)} \ker (|\chi|)$ and $I(F \otimes_{\mathbb{Q}}\mathbb{R})^1=I(\A_F)^1 \cap I(F \otimes_{\mathbb{Q}}\mathbb{R})$, where $X(I)$ is the additive group of homomorphisms from $I$ to $GL(1)$ defined over $F$.
Then $I(F) \subset I(\A_F)^1$. When $E=F$ and $\epsilon=-1$, $I$ is the symplectic group. Then it has a non-trivial double cover, which is called the metaplectic group. We define the topological group $\textbf{I}$ and the surjective morphism $pr: \textbf{I} \to I(\A_F)$  according to $(E,\epsilon)$ as follows:
\begin{align*}
& \textbf{I}=\begin{cases} \text{a metaplectic double cover of $I(\A_F)$},  \ \ \quad \quad \quad \quad  \quad \quad \quad \quad E=F, \ \epsilon=-1; \\ I(\A_F),  \quad \quad \quad \quad \quad \quad \quad \quad \quad \quad \quad \quad \quad \quad \quad \quad \quad \quad \quad \quad \  \ \ \ \ \  \text{otherwise}, \end{cases}
\\&
pr=\begin{cases} \text{a double covering map whose kernel is } \{\pm 1\},   \ \quad \quad \quad \quad E=F, \ \epsilon=-1; \\ \text{identity map}, \quad \quad  \quad \quad \quad \quad \quad \quad \quad \quad \quad \quad   \ \quad \quad \quad \quad \quad \quad \text{otherwise.}
\end{cases}
\end{align*}

The letters $P,Q$ (resp. $P',Q'$) are reserved for parabolic subgroups of $G$ (resp. $G'$) and $M,L$ (resp. $M',L'$) for their Levi subgroups and $U,V$ (resp. $U',V'$) for their unipotent radicals. 
We fix the minimal $F$-parabolic subgroup $P_0 \times P_0'$ of $G \times G'$ and call $P \times Q'$ 
a parabolic subgroup of $G \times G'$ if $P$ and $Q'$ are parabolic subgroup of $G$ and $G'$ respectively. 
To distinguish the notation of parabolic subgroups of $G \times G'$ with those of $G$, we use the letter $\tilde{S}$ for parabolic subgroups of  $G \times G'$. 
Let $\theta$ be the involution on the set of parabolic subgroups of $G \times G'$ sending $P \times Q'$ to $Q \times P'$. 
Then the map $P \to P \times P'$ gives a bijection between parabolic subgroups of $G$ 
and $\theta$-invariant parabolic subgroups of $G \times G'$. We use the letter $\bar{P}$ to denote $P \times P'$.
Let $N$ be a unipotent $F$-subgroup of $G$ which admits a $G'$-action $\sigma:G' \to \text{Aut}(N)$ such that $N \rtimes G'$ is a $F$-subgroup of $G$.

For $\phi \in \As(\textbf{G}), \ \phi' \in \As(N \rtimes \textbf{G}')$, we define a function $(\phi \otimes \phi')_{P,N} \co  \textbf{G}' \to \CC$ as 
\[ 
(\phi \otimes \phi')_{P,N} (g) \col \int_{[N]} (\phi \otimes \phi')_{\overline{P}} (n,g) dn. 
\] 
\begin{rem}\label{con}
When $N$ is the trivial group $\mathbf{1}$, for every $\phi \in \As(\textbf{G})$ and $\phi' \in \As(\mathbf{1} \rtimes \textbf{G}')$, $(\phi \otimes \phi')_{P,\mathbf{1}}=\phi_P \cdot \phi'_{P'}.$ 
\end{rem}

\begin{prop} 
For $\phi \in \As(\textbf{G}), \phi' \in \As(N \rtimes \textbf{G}')$, 
$(\phi \otimes \phi')_{P,N}$ is a $U'(\A_F)P'(F)$-invariant smooth function on $\textbf{G}'$ for any parabolic subgroup $P$ of $G$.
\end{prop}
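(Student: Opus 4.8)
The plan is to verify the five defining properties of an element of $\As_P(\textbf{G}')$ for the function $(\phi \otimes \phi')_{P,N}$: that it is left-invariant under $U'(\A_F)P'(F)$, smooth, $\textbf{K}'$-finite, $\mathfrak z'$-finite, and of moderate growth, and moreover that for each $k \in \textbf{K}'$ the function $m \mapsto \delta_{P'}(m)^{-1}(\phi\otimes\phi')_{P,N}(mk)$ lies in an appropriate automorphic space on the Levi — though since the statement only asserts membership in $\As_P(\textbf{G}')$ (not $\As_{P'}^\rho$), it suffices to check the former list. First I would unwind the definition: $(\phi\otimes\phi')_{P,N}(g) = \int_{[N]} \phi_P(ng)\,\phi'_{P'}(n,g)\,dn$, where $\phi_P$ is the constant term of $\phi$ along $P$ and $\phi'_{P'}$ is the partial constant term of $\phi'$ along $P'$ defined in the excerpt. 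The key structural fact to exploit is that $N \rtimes G' \subseteq G$ is compatible with the parabolics in the precise sense set up in Section 2, namely the bijection $P \leftrightarrow P' = P \cap G'$ and the relation $\overline P = P \times P'$; this is what makes the integrand behave coherently under the relevant groups.

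The left-invariance under $P'(F)$ is the first substantive point: for $\gamma \in G'(F)$ we change variables $n \mapsto \sigma(\gamma)^{-1}(n)$ in the compact integral over $[N]$ (the Haar measure is $G'(F)$-invariant since $[N]$ has volume $1$ and $\sigma(\gamma)$ is an $F$-automorphism of the unipotent group $N$), use the automorphy of $\phi$ on $\textbf G$ and the automorphy of $\phi'$ on $N \rtimes \textbf G'$ recorded in the Definition, and note that $(\gamma n, \gamma g)$ corresponds under $N \rtimes G' \hookrightarrow G$ to an element of $G(F) \cdot (\text{stuff})$ so that $\phi_P$ is unaffected. Left-invariance under $U'(\A_F)$ requires showing that integrating $\phi'_{P'}(n, u g)$ over $[U']$ is already built in — indeed $\phi'_{P'}$ is $U'(\A_F)$-left-invariant by construction — and that $\phi_P(n u g) = \phi_P(ng)$ modulo the $[N]$-integration; here one uses that $U' \subseteq P'$ normalizes... more precisely that $u \in U'(\A_F)$ and the conjugation relation between $U'$ and $N$ inside $U \subseteq P$ combine so that $\phi_P$, being $U(\A_F)$-left-invariant and $N \subseteq U_0$ with the semidirect structure, absorbs $u$ after the change of variables $n \mapsto \sigma(u)(n)$ in $[N]$. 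Smoothness, $\textbf K'$-finiteness and $\mathfrak z'$-finiteness pass through the integral over the compact $[N]$ routinely: $\textbf K'$-finiteness of $\phi_P(ng)$ in $g$ follows from $\textbf K$-finiteness of $\phi$ together with $\textbf K' \subseteq \textbf K$ (as arranged in Section 2), and $\mathfrak z'$-finiteness follows since $\mathfrak z'$ maps into $\mathfrak z$ up to the center and both $\phi$ and $\phi'$ are $\mathfrak z$- resp. $\mathfrak z'$-finite, the finite-dimensionality surviving the convolution with the fixed compact integration.

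The main obstacle I expect is the moderate growth estimate. One must bound $\|g\|^{-m} \big| \int_{[N]} \phi_P(ng)\phi'_{P'}(n,g)\,dn \big|$ uniformly for $g \in \textbf G'(\A_F)$. The natural route is: $\phi$ has moderate growth on $\textbf G(\A_F)$, hence so does its constant term $\phi_P$ (with a possibly worse exponent), giving $|\phi_P(ng)| \le C \|ng\|^{M}$; one then needs to control $\|ng\|$ in terms of $\|g\|$ and the "size" of $n \in [N]$ — but $n$ ranges over the compact quotient $[N]$, so one can choose a bounded fundamental domain and absorb the contribution of $n$ into a constant depending only on $N$, yielding $|\phi_P(ng)| \le C' \|g\|^{M}$ uniformly in $n \in [N]$; meanwhile the definition of moderate growth for $\phi' \in \As(N\rtimes\textbf G')$ is stated exactly as a bound on $\sup_g \|g\|^{-m}\int_{[N]}|X\phi'(n,g)|\,dn$, and the partial constant term $\phi'_{P'}$ inherits such a bound by interchanging the (compact) $[U']$ and $[N]$ integrations. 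Combining a pointwise bound on $\phi_P(ng)$ with an $L^1([N])$-bound on $\phi'_{P'}(n,g)$ gives the desired estimate; the analogous argument applied to $X(\phi\otimes\phi')_{P,N}$ for left-invariant differential operators $X$ on $\textbf G'(F\otimes_\QQ\RR)$ follows by differentiating under the integral sign and using that $X$ acting in the $g$-variable produces finitely many terms of the same shape (the $N$-variable differentiations being controlled since $[N]$ is compact). The delicate bookkeeping is making sure the growth exponents for constant terms and the interchange of integrations are legitimate, for which I would cite the standard facts on constant terms of automorphic forms from \cite{Mo}.
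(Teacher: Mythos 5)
Your overall plan — check the defining properties of $\As_P(\textbf{G}')$ one by one — matches the paper's, and your treatment of the left $U'(\A_F)P'(F)$-invariance is essentially the paper's computation (change of variables in $[U]$, $[U']$, $[N]$; the key Jacobian $\delta_\sigma(p^{-1})=|\det(d\sigma(p^{-1});\mathrm{Lie}(N))|=1$ for $p\in G'(F)\subset G'(\A_F)^1$, which you recover via the product formula). But the proportions are off: you devote most of your effort to moderate growth, which the paper explicitly treats as one of the properties that ``readily follow from those of $\phi$ and $\phi'$,'' while dispatching $\mathfrak{z}'$-finiteness in one sentence. That is exactly the point where the paper does real work, and your sentence contains a genuine gap.

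Specifically, you write that ``$\mathfrak z'$-finiteness follows since $\mathfrak z'$ maps into $\mathfrak z$ up to the center and both $\phi$ and $\phi'$ are $\mathfrak z$- resp.\ $\mathfrak z'$-finite.'' There are two independent problems here. First, $\phi_P$ is an automorphic form on $\textbf{G}$, and what appears in $(\phi\otimes\phi')_{P,N}$ is its restriction $g\mapsto\phi_P(ng)$ to (a translate of) $\textbf{G}'$. Passing from $\mathfrak z$-finiteness on $\textbf{G}$ to $\mathfrak z'$-finiteness of the restriction to $\textbf{G}'$ is not automatic: the paper invokes the coherence property of Okounkov--Olshanski (\cite[Theorem 10.1]{OO1}, \cite[Theorem 3.1]{OO2}), which produces a monomorphism $i\colon\mathfrak z'\hookrightarrow\mathfrak z$ with $Y\cdot(f|_{\textbf{G}'})=(i(Y)\cdot f)|_{\textbf{G}'}$; this is a substantive ingredient, not a generality about centers. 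Second, even granting $\mathfrak z'$-finiteness of each factor, $\mathfrak z'$-finiteness of the pointwise product $\varphi_{n,P}|_{\textbf{G}'}\cdot\varphi'_{n,P'}$ is not free: elements of $\mathfrak z'$ are not first-order and there is no naive Leibniz rule, so one cannot simply say ``finite-dimensionality survives.'' The paper constructs the ideal $i^{-1}(\mathfrak I)\cap\mathfrak I'$ (with $\mathfrak I$, $\mathfrak I'$ the annihilators of $\phi_P$ and $\phi'_{P'}$) of finite codimension in $\mathfrak z'$ and argues that it kills the product, then interchanges the $[N]$-integral with $Z\in\mathfrak z'$ by dominated convergence on the compact $[N]$. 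Any proof of this proposition has to supply these two ingredients (or an equivalent), and your sketch currently does not. The rest of your proposal — the change-of-variables invariance argument, the routine smoothness and $K'$-finiteness, and the moderate-growth estimate via a compact fundamental domain for $[N]$ and submultiplicativity of the height — is consistent with what the paper either proves or declares routine.
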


\begin{proof} The smoothness readily follows from those of $\phi$ and $\phi'$. To prove the left $U'(\A_F)P'(F)$-invariance,
choose an arbitrary $v \in U'(\A_F)$ and $p\in P'(F)$. 
Then $(\phi \otimes \phi')_{P,N}(apg)$ equals 
\[ 
\int_{[N]} \int_{[U']} \int_{[U]} \phi(nu_1vpg) \phi'(n,u_2vpg) du_1du_2dn 
= \int_{[N]} \int_{[U']}  \int_{[U]} \phi(nu_1pg) \phi'(n,u_2pg)  du_1du_2dn. 
\] 
(We used the change of variables $u_1 \to u_1v^{-1}$, $u_2 \to u_2v^{-1}$.)
For each $i=1,2$, write $u_i'=p^{-1}u_ip$. 
Since $p \in P'(F)\ss P'(\A_F)^1 \ss P(\A_F)^1$, we have $du_i'=du_i$ because $\delta_{P'}(p^{-1})=\delta_P(p^{-1})=1$.
We also write $n'=\sigma(p^{-1})n$. 
Then $dn'=\delta_{\sigma} (p^{-1}) dn$, where 
\[
\delta_\sigma(p^{-1}) = |\det(d\sigma(p^{-1}); \mathrm{Lie}(N))|
\]
with $d\sigma(p^{-1}) \colon \mathrm{Lie}(N) \to \mathrm{Lie}(N)$ being the differential of $\sigma(p^{-1})$.
Since $p \in G'(F) \subset G'(\A_F)^1$, we have $\delta_{\sigma} (p^{-1}) =1$. 
Thus 
\[
\int_{[N]} \int_{[U']} \int_{[U]} \phi(nu_1pg) \phi'(n,u_2pg)  du_1du_2dn
\]
equals 
\[ 
\int_{[N]} \int_{[U']}\int_{[U]}   \phi(p n'u_1'g)) \phi'((1,p) \cdot (n',u_2'g)) du_1' du_2'dn' 
\] 
and since $\varphi$ is $N(F) \rtimes G'(F)$-invariant, the last integral equals
\[ 
\int_{[N]} \int_{[U']}\int_{[U]}  \phi (n'u_1'g) \phi (n',u_2'g) du_1' du_2'  dn' = (\phi \otimes \phi')_{P,N} (g). 
\] 
This shows that $(\phi \otimes \phi')_{P,N} $ is a left $U'(\A_F)P'(F)$-invariant function. 
\end{proof}

\section{\textbf{Mixed truncation}}
In this section, we introduce the mixed truncation which is an essential ingredient to define regularized periods. 

\noindent 
Let $T' \in \aa_0'$. 
For $\phi_1 \in \As(\textbf{G}), \phi_2 \in \As(N \rtimes \textbf{G}')$, we define their mixed  truncation by
\[
\Lambda_m^{T'}(\phi_1 \otimes \phi_2)(g)
=\sum_{P'} (-1)^{\dim \mathfrak{a}_{P'}}\sum_{\gamma \in {P'}(F)\bs {G'(F)}} 
(\phi_1 \otimes \phi_2)_{P,N}(\gamma g)  \hat{\tau}_{P'}(H_{P'}(\gamma g)-T') \  , \ g\in \textbf{G}'.
\]
More generally, we define a partial mixed truncation by
\[ 
\Lambda_m^{T',P'}(\phi_1 \otimes \phi_2 )(g)
=\sum_{Q'\subset P'} (-1)^{\dim \mathfrak{a}_{Q'}^{P'}}\sum_{\delta \in {Q'}(F)\bs {P'}(F)}  
(\phi_1 \otimes \phi_2)_{Q,N}(\delta g) \hat{\tau}^{P'}_{Q'}(H_{Q'}(\delta g)-T').
\]
Then $\Lambda_m^{T',P'}(\phi_1 \otimes \phi_2 )$ is a function on $P'(F) \bs \textbf{G}'$.
\par

For any two parabolic groups $Q' \ss P'$, Langlands' combinatorial lemma asserts that 
\begin{equation} \label{ll0}
\sum_{Q' \ss R' \ss P'} (-1)^{\dim\aa_{Q'}^{R'}} \hat{\tau}_{Q'}^{R'} \tau_{R'}^{P'}
=
\begin{cases} 
1 \quad \text{ if } P'=Q' \\ 
0 \quad \text{ otherwise.} 
\end{cases}
\end{equation}
\par
We put
\beqn 
\Gamma_{Q'}^{P'}(H',X)
= \sum_{Q' \ss R' \ss P'} (-1)^{\dim\aa_{R'}^{P'}} \cdot \tau_{Q'}^{R'}(H')\hat{\tau}_{R'}^{P'}(H'-X), \quad H',X \in \aa_0'.
\eeqn
Note that this function depends only on the projection of $H'$ and $X$ onto $\aa_{Q'}^{P'}$ and for a fixed $X$, it is compactly supported function of $H' \in \aa_{Q'}^{P'}$.

The following lemma is a consequence of  Langlands' combinatorial lemma.
\begin{lem} For $H',X \in \aa_0',$
\begin{align}
\label{ll1}
\hat{\tau}_{Q'}^{P'}(H'-X)
&=\sum_{Q' \ss R' \ss P'}(-1)^{\dim\aa_{R'}^{P'}} \cdot \hat{\tau}_{Q'}^{R'}(H) \Gamma_{R'}^{P'}(H',X),
\\
\label{ll2}
\tau_{Q'}^{P'}(H'-X)
&=\sum_{Q' \ss R' \ss P'} \Gamma_{Q'}^{R'}(H'-X,-X) \cdot \tau_{R'}^{P'}(H').
\end{align}

\end{lem}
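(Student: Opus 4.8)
Both identities are formal consequences of the definition of $\Gamma_{Q'}^{P'}$ together with Langlands' combinatorial lemma \eqref{ll0}; all sums below are finite (over parabolic subgroups in a fixed interval), and since each of $\tau$, $\hat\tau$, $\Gamma$ depends only on the relevant projection, the manipulations take place in $\aa_{Q'}^{P'}$ and are well defined. The plan is to substitute the definition of $\Gamma$ into the right-hand side of each identity, interchange the order of the resulting double sum over intermediate parabolic subgroups, split the sign via $\dim\aa_{R'}^{P'}=\dim\aa_{R'}^{S'}+\dim\aa_{S'}^{P'}$, and recognize the inner sum as an instance of \eqref{ll0}.

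For \eqref{ll1}: unfolding $\Gamma_{R'}^{P'}(H',X)=\sum_{R'\ss S'\ss P'}(-1)^{\dim\aa_{S'}^{P'}}\tau_{R'}^{S'}(H')\hat\tau_{S'}^{P'}(H'-X)$ and reindexing, the right-hand side of \eqref{ll1} becomes
\[
\sum_{Q'\ss S'\ss P'}(-1)^{\dim\aa_{S'}^{P'}}\hat\tau_{S'}^{P'}(H'-X)\sum_{Q'\ss R'\ss S'}(-1)^{\dim\aa_{R'}^{P'}}\hat\tau_{Q'}^{R'}(H')\tau_{R'}^{S'}(H').
\]
Writing $(-1)^{\dim\aa_{R'}^{P'}}=(-1)^{\dim\aa_{R'}^{S'}}(-1)^{\dim\aa_{S'}^{P'}}$, the two powers of $(-1)^{\dim\aa_{S'}^{P'}}$ cancel and the inner sum is $\sum_{Q'\ss R'\ss S'}(-1)^{\dim\aa_{R'}^{S'}}\hat\tau_{Q'}^{R'}(H')\tau_{R'}^{S'}(H')$. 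This equals $\delta_{Q',S'}$: indeed, pulling out $(-1)^{\dim\aa_{Q'}^{S'}}$ via $(-1)^{\dim\aa_{R'}^{S'}}=(-1)^{\dim\aa_{Q'}^{S'}}(-1)^{\dim\aa_{Q'}^{R'}}$ turns it into exactly the left-hand side of \eqref{ll0}. Hence the whole expression collapses to $\hat\tau_{Q'}^{P'}(H'-X)$.

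For \eqref{ll2}: from the definition, $\Gamma_{Q'}^{R'}(H'-X,-X)=\sum_{Q'\ss S'\ss R'}(-1)^{\dim\aa_{S'}^{R'}}\tau_{Q'}^{S'}(H'-X)\hat\tau_{S'}^{R'}(H')$ (the arguments of $\hat\tau$ combine to $H'$). Substituting and reindexing, the right-hand side of \eqref{ll2} becomes
\[
\sum_{Q'\ss S'\ss P'}\tau_{Q'}^{S'}(H'-X)\sum_{S'\ss R'\ss P'}(-1)^{\dim\aa_{S'}^{R'}}\hat\tau_{S'}^{R'}(H')\tau_{R'}^{P'}(H'),
\]
and now the inner sum is literally the left-hand side of \eqref{ll0} with $Q'$ replaced by $S'$, hence equals $\delta_{S',P'}$. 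The outer sum then collapses to $\tau_{Q'}^{P'}(H'-X)$.

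The only genuinely delicate point is the sign bookkeeping — in particular noticing that \eqref{ll1} requires the "dual-sign" form $\sum(-1)^{\dim\aa_{R'}^{S'}}\hat\tau_{Q'}^{R'}\tau_{R'}^{S'}=\delta_{Q',S'}$, which is equivalent to \eqref{ll0} by the constancy of $\dim\aa_{Q'}^{S'}$ on the interval, whereas \eqref{ll2} uses \eqref{ll0} verbatim. Everything else is a routine interchange of finite summations.
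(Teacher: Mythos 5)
Your proof is correct and follows essentially the same route as the paper's: substitute the definition of $\Gamma$, interchange the double sum over the interval $Q'\ss R'\ss S'\ss P'$, and recognize the inner sum as an instance of \eqref{ll0} after factoring the sign. The only cosmetic difference is the order in which you reindex and combine exponents; the paper combines $(-1)^{\dim\aa_{R'}^{P'}+\dim\aa_{S'}^{P'}}=(-1)^{\dim\aa_{R'}^{S'}}$ before reindexing, while you reindex first and then cancel the two copies of $(-1)^{\dim\aa_{S'}^{P'}}$—either way the same computation.
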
 

\begin{proof}Note that 
\begin{align*}&\sum_{Q' \ss R' \ss P'}(-1)^{\dim\aa_{R'}^{P'}} \cdot \hat{\tau}_{Q'}^{R'}(H') \Gamma_{R'}^{P'}(H',X)=\sum_{Q' \ss R' \ss P'}\sum_{R' \ss S' \ss P'}(-1)^{\dim\aa_{R'}^{S'}} \cdot \hat{\tau}_{Q'}^{R'}(H') \tau_{R'}^{S'}(H') \hat{\tau}_{S'}^{P'}(H'-X) 
\\&=\sum_{Q' \ss S' \ss P'}(-1)^{\dim\aa_{Q'}^{S'}}\cdot \hat{\tau}_{S'}^{P'}(H'-X)\cdot \left( \sum_{Q' \ss R' \ss S'}(-1)^{\dim\aa_{Q'}^{R'}} \cdot \hat{\tau}_{Q'}^{R'}(H') \tau_{R'}^{S'}(H') \right).
\end{align*}
By (\ref{ll0}), it equals to  $\hat{\tau}_{Q'}^{P'}(H'-X)$. This proves  (\ref{ll1}).

On the other hand, 
\begin{align*}&\sum_{Q' \ss R' \ss P'} \Gamma_{Q'}^{R'}(H'-X,-X) \cdot \tau_{R'}^{P'}(H')
\\&=\sum_{Q' \ss R' \ss P'} \sum_{Q' \ss S' \ss R'} (-1)^{\dim\aa_{S'}^{R'}} \cdot \tau_{Q'}^{S'}(H'-X)\cdot \hat{\tau}_{S'}^{R'}(H') \tau_{R'}^{P'}(H')
\\&=\sum_{Q' \ss S' \ss P'} \tau_{Q'}^{S'}(H'-X) \sum_{S' \ss R' \ss P'} (-1)^{\dim\aa_{S'}^{R'}} \cdot \hat{\tau}_{S'}^{R'}(H') \tau_{R'}^{P'}(H').
\end{align*}
By (\ref{ll0}), it equals to  $\tau_{Q'}^{P'}(H'-X)$. This proves  (\ref{ll2}).

\end{proof}
The mixed truncation has the following property.

\begin{lem} \label{mt} 
Let $\phi \in \As(\textbf{G}), \phi' \in \As(N \rtimes \textbf{G}')$. Then for all $g \in \textbf{G}'$,
\[ 
\Lambda_m^{T'+S',P'}(\phi \otimes \phi')(g)
=\sum_{Q'\ss P'} \sum_{\delta \in Q'(F) \bs P'(F)}
\Lambda_m^{T',Q'}(\phi \otimes \phi')(\delta g) \cdot \Gamma_{Q'}^{P'}(H_Q(\delta g)-T',S').
\]
\end{lem}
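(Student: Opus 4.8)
The plan is to mimic the standard descent argument for Arthur's truncation operator (see \cite[Lemma 1.4]{A1}) and adapt it to the mixed truncation, using Lemma \ref{ll1} as the combinatorial input in place of the classical identity. First I would unfold the left-hand side directly from the definition of the partial mixed truncation: writing $R'$ for the summation index,
\[
\Lambda_m^{T'+S',P'}(\phi\otimes\phi')(g)
=\sum_{R'\ss P'}(-1)^{\dim\aa_{R'}^{P'}}\sum_{\delta\in R'(F)\bs P'(F)}
(\phi\otimes\phi')_{R,N}(\delta g)\,\hat\tau_{R'}^{P'}\big(H_{R'}(\delta g)-T'-S'\big).
\]
Now apply Lemma \ref{ll1} with $H'=H_{R'}(\delta g)-T'$ and $X=S'$, so that $\hat\tau_{R'}^{P'}(H_{R'}(\delta g)-T'-S')$ is rewritten as $\sum_{R'\ss Q'\ss P'}(-1)^{\dim\aa_{Q'}^{P'}}\hat\tau_{R'}^{Q'}(H_{R'}(\delta g)-T')\,\Gamma_{Q'}^{P'}(H_{R'}(\delta g)-T',S')$. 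Here one uses that $\Gamma_{Q'}^{P'}$ and $\hat\tau_{R'}^{Q'}$ depend only on the projection of their argument onto $\aa_{R'}^{Q'}$, respectively $\aa_{Q'}^{P'}$; in particular $\Gamma_{Q'}^{P'}(H_{R'}(\delta g)-T',S')$ depends only on the image of $H_{R'}(\delta g)$ in $\aa_{Q'}$, which equals $H_{Q'}(\delta g)$ for $R'\ss Q'$, and this image in turn depends only on the coset $Q'(F)\delta$. After substituting, the signs combine to $(-1)^{\dim\aa_{R'}^{P'}}(-1)^{\dim\aa_{Q'}^{P'}}=(-1)^{\dim\aa_{R'}^{Q'}}$ modulo a harmless global sign that I will track carefully.

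Next I would reorganize the triple sum. Fix $Q'$ with $R'\ss Q'\ss P'$ and break the coset space $R'(F)\bs P'(F)$ along the intermediate group: every $\delta\in R'(F)\bs P'(F)$ factors (non-uniquely at the level of representatives, but the sum is well-defined) as $\delta=\eta\delta_1$ with $\eta\in R'(F)\bs Q'(F)$ and $\delta_1\in Q'(F)\bs P'(F)$. Using the left $U'(\A_F)Q'(F)$-invariance from the Proposition — applied to the function $(\phi\otimes\phi')_{R,N}\in\As_{R'}(\textbf G')$ — together with the observation that $H_{R'}(\eta\delta_1 g)$ and $H_{R'}(\delta_1 g)$ differ by an element of $\aa_{Q'}$ (hence give the same value of $\hat\tau_{R'}^{Q'}(\,\cdot\,-T')$, which only sees the $\aa_{R'}^{Q'}$-component), the inner sum over $\eta\in R'(F)\bs Q'(F)$ reassembles precisely into $\Lambda_m^{T',Q'}(\phi\otimes\phi')(\delta_1 g)$. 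What remains is
\[
\sum_{Q'\ss P'}\sum_{\delta_1\in Q'(F)\bs P'(F)}
\Lambda_m^{T',Q'}(\phi\otimes\phi')(\delta_1 g)\;\Gamma_{Q'}^{P'}(H_{Q'}(\delta_1 g)-T',S'),
\]
and since $\Gamma_{Q'}^{P'}$ is translation-equivariant in the way that turns $H_{Q'}(\delta_1 g)-T'$ into the stated $H_{Q'}(\delta_1 g)-T'$ with the $\Gamma$ of the lemma statement (note the statement writes $\Gamma_{Q'}^{P'}(H_{Q'}(\delta g)-T',S')$, matching exactly), this is the desired right-hand side, with the index $Q'$ renamed.

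The main obstacle I anticipate is bookkeeping rather than conceptual: making sure that (a) all the ``depends only on the projection onto $\aa_{Q'}^{P'}$'' claims are invoked correctly so that $\Gamma_{Q'}^{P'}$ and the various $\hat\tau$'s are genuinely functions of the cosets one is summing over, and that $H_{R'}(\delta g)$ can legitimately be replaced by $H_{Q'}(\delta g)$ inside $\Gamma_{Q'}^{P'}$ (this uses the compatibility of height maps under $R'\ss Q'$ recorded just before the definition of the mixed truncation); (b) the interchange of the order of summation over $R'\ss Q'\ss P'$ is valid — here finiteness of all sums is automatic since there are finitely many parabolics and the $\Gamma$-factor is compactly supported, so there is no convergence issue, only the need to justify splitting $R'(F)\bs P'(F)$ through $Q'$ compatibly with the truncation and constant-term operators; and (c) tracking the sign $(-1)^{\dim\aa_{R'}^{Q'}}$ through the reindexing so that it is exactly the sign appearing in the definition of $\Lambda_m^{T',Q'}$. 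None of these is hard, but each must be done with care, and the cleanest route is to first prove the analogous statement for the constant-term data $(\phi\otimes\phi')_{R,N}$ as an abstract element of $\As_{R'}(\textbf G')$, so that the argument becomes literally Arthur's, applied to the reductive group $\textbf G'$.
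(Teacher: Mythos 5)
Your proposal is correct and follows essentially the same route as the paper: unfold the definition, substitute the identity from the lemma proved from Langlands' combinatorial lemma (the paper's \eqref{ll1}), then split the cosets through the intermediate parabolic and reassemble the inner sum into the partial truncation. The only cosmetic difference is that you swap the roles of the letters $Q'$ and $R'$, and your cautious note about a "harmless global sign" is unnecessary since $(-1)^{\dim\aa_{R'}^{P'}}(-1)^{\dim\aa_{Q'}^{P'}}=(-1)^{\dim\aa_{R'}^{Q'}}$ exactly, with no extra factor.
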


\begin{proof} 
Using (\ref{ll1}), 
$\Lambda_m^{T'+S',P'}(\phi \otimes \phi')(g)$ equals
\begin{align*}
&\sum_{Q'\subset P'} (-1)^{\dim \mathfrak{a}_{Q'}^{P'}} \cdot \sum_{\delta \in Q'(F)\bs P'(F)}  
(\phi_1 \otimes \phi_2)_{Q,N}(\delta g)  
\\\notag
&\times
\left( \sum_{Q' \ss R' \ss P'}(-1)^{\dim\aa_{R'}^{P'}} 
\cdot \hat{\tau}_{Q'}^{R'}(H_{Q'}(\delta g)-T') \Gamma_{R'}^{P'}(H_{R'}(\delta g)-T',S') \right).
\end{align*}
Write $\delta=\delta_1 \delta_2,$ where $\delta_1 \in Q'(F)\bs R'(F), \delta_2 \in R'(F) \bs P'(F)$. 
Then $H_{R'}(\delta g)=H_{R'}(\delta_2g)$ and thus 
\begin{align*}
&\Lambda_m^{T'+S',P}(\phi \otimes \phi')(g)
\\&= 
\sum_{R'\subset P'} \sum_{\delta_2 \in R'(F)\bs P'(F)} 
\left(  \sum_{Q' \ss R' } \sum_{\delta_1 \in Q'(F)\bs R'(F)} (-1)^{\dim\aa_{Q'}^{R'}}\cdot 
(\phi_1 \otimes \phi_2)_{Q,N}(\delta_1\delta_2 g)  
\cdot \hat{\tau}_{Q'}^{R'}(H_{Q'}(\delta_1\delta_2g)-T') \right) 
\\&\times
 \Gamma_{R'}^{P'}(H_{R'}(\delta_2 g)-T',S')
\\&=
\sum_{R'\subset P'} \sum_{\delta_2 \in R'(F)\bs P'(F)} \Lambda_m^{T',R'}(\phi \otimes \phi')(\delta_2 g) \cdot \Gamma_{R'}^{P'}(H_{R'}(\delta_2 g)-T',S').
\end{align*}
This completes the proof.
\end{proof}

For any two parabolic subgroups $Q'\ss P'$, put
\[
\sigma_{Q'}^{P'}=\sum_{Q' \ss R'} (-1)^{\dim\aa_{Q'}^{R'}} \cdot \tau_{P'}^{R'} \hat{\tau}_{R'}.
\]
Using the binomial theorem, it is easy to check that for any parabolic subgroup $P_1' \supseteq P'$,
\beq 
\label{bin} \tau_{P'}^{P_1'} \hat{\tau}_{P_1'}=\sum_{P_1' \ss R'} \sigma_{P'}^{R'}.
\eeq
\par
For a parabolic subgroup $P=UM$ (resp. $P'=U'M'$) of $G$ (resp. $G'$), write $\textbf{M}^1=pr^{-1}(M^1(\A_F
))$ (resp. $\textbf{M}'^1=pr^{-1}(M'^1(\A_F
))$). 
Let $w$ be a compact subset of $U_0(\A_F) \textbf{M}_{0}^1$ and $t_0 \in \aa_0$. Note that $\textbf{M}_0$ is the direct product of the normal subgroup $\textbf{M}_0^1$ with $A_{0}$. Put
\[A_0(t_0)=\{ a \in A_0 \mid \la \beta, H_{P_0}(a)-t_0 \ra >0\ \ \text{ for all } \beta \in \Delta_0^{P} \}.
\]
For any parabolic subgroup $P$  of $G$ and $T \in \aa_0$, define a Siegel set and a truncated Siegel set relative to $P$ by 
\begin{align*}
& \S^{P}=S^{P}(t_0)=\{x=pak \ | \ p \in w, a \in A_0(t_0), k \in \textbf{K}\}, 
\\&  \S^{P}(T)=\S^{P}(t_0,T)
=\{x \in \S^{P}(t_0) \ | \ \la \alpha,H_{P_0}(x)-T\ra \le 0 \ \ \text{ for all }  \alpha \in \hat{\Delta}_0^{P} \},
\end{align*}
respectively. 
When $P=G$, we simply write $\S= \S^{G}$ and $\S(T)= \S^{G}(T)$. Let $F^{P}(g,T)$ be the characteristic function on $P(F)\S^{P}(T)$ in $\textbf{G}$. We take $w', \S^{P'},\S^{P'}(T')$ and $F^{P'}(g,T')$ similarly. We choose $w,w',t_0,t_0'$ so as to $\textbf{G}=P(F)\S^P$ and $\textbf{G}'=P'(F)\S^{P'}$.
\par
 The following lemma, which is a generalization of \cite{IY}{,  Lemma 2.1], is crucial to prove the rapidly decreasing property of $\Lambda_m^{T',P'}$.
\begin{lem}\label{kl} For any $w',t_0'$, we can choose $w$ and $t_0$ so that $\S^{P'} \subset \S^P$.

\end{lem}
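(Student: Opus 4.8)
\textbf{Proof proposal for Lemma \ref{kl}.}

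The plan is to reduce the statement to a comparison of Siegel sets for the two reductive groups $G$ and $G'$, using the fact that $G'$ is embedded in $G$ as the isometry group of the subspace $W_m \subset W_n$, and that the parabolic $P'= P\cap G'$ arises from $P\supseteq P(\overline{X'_{k'}})$ by intersection. The key geometric input is that the maximal split torus $A_{M_0'}$ of $M_0'$ sits inside the maximal split torus $A_{M_0}$ of $M_0$, and the flag $\overline{X'_{k'}}$ used to define $P_0'$ is part of the flag defining $P_0$; hence there is a natural surjection $\aa_0 \twoheadrightarrow \aa_{0'}$ (the projection already fixed in Section 3) compatible with the height maps, i.e. the restriction of $H_{P_0}$ to $G'(\A_F)$ projects onto $H_{P_0'}$. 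I would first record this compatibility precisely: for $g\in G'(\A_F)$, the component of $H_{P_0}(g)$ in $\aa_{0'}$ equals $H_{P_0'}(g)$, and the simple roots $\Delta_0^{P'}$ of $G'$ are restrictions (or a subset, after the identification) of the simple roots $\Delta_0^{P}$ of $G$.

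Next I would unwind the definitions. An element $x\in \S^{P'}(t_0')$ has an Iwasawa decomposition $x = p'a'k'$ with $p'\in w'$, $a'\in A_0'(t_0')$, $k'\in \mathbf{K}'$. To place $x$ inside $\S^{P}(t_0)$, I must produce an Iwasawa decomposition of $x$ \emph{with respect to $G$}: write $k'\in \mathbf{K}' \subset \mathbf{K}$, so the compact part is already fine; write $a'\in A_0' \subset A_0$, and check that $a'$ lies in $A_0(t_0)$ provided $t_0$ is chosen suitably negative — this is where one uses that the roots in $\Delta_0^{P}\setminus \Delta_0^{P'}$ pair with $H_{P_0}(a')$ in a way controlled only by the $\aa_{0'}$-component (since $a'\in A_0'$, its $\aa_0^{0'}$-component vanishes in the relevant directions, or is bounded), so $\langle \beta, H_{P_0}(a') - t_0\rangle > 0$ holds once $t_0$ is pushed far enough in the negative chamber. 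Finally, the $w'$-part $p'$: since $w'$ is a fixed compact subset of $U_0'(\A_F)\mathbf{M}_0'(\A_F)^1$, and $U_0' \subset U_0$, $\mathbf{M}_0'(\A_F)^1 \subset \mathbf{M}_0(\A_F)^1$, the set $w'$ is already a compact subset of $U_0(\A_F)\mathbf{M}_0(\A_F)^1$; enlarging $w$ to contain $w'$ (and, if necessary, absorbing the bounded discrepancy between the $G'$- and $G$-Iwasawa coordinates of a point of $w'$) makes $x\in \S^{P}(t_0)$. One must also verify the truncating inequalities $\langle \alpha, H_{P_0}(x) - T\rangle \le 0$ for $\alpha \in \hat\Delta_0^{P}$ are inherited; but since the lemma only claims $\S^{P'}\subset \S^{P}$ (the \emph{untruncated} Siegel sets), I only need the Siegel-set containment $\S^{P'}(t_0')\subset \S^{P}(t_0)$, so the truncation inequalities are not at issue here — I would make sure the statement is read that way, matching \cite[Lemma 2.1]{IY}.

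The main obstacle I anticipate is the bookkeeping around the two different Iwasawa decompositions: a point written as $p'a'k'$ relative to $(P_0', K')$ need not have $a'$ as its $A_0$-part relative to $(P_0, K)$, because the embedding $G'\hookrightarrow G$ need not send $K'$-cosets to $K$-cosets compatibly. The resolution is that the discrepancy is bounded — the image of a fixed compact set stays in a fixed compact set — so it can be absorbed by enlarging $w$ and shifting $t_0$; quantifying this uniformly is the one place where a genuine (if routine) argument is needed, essentially reproducing the reduction-theory estimate behind \cite[Lemma 2.1]{IY} in the relative setting. I would carry it out by fixing, once and for all, a compact fundamental domain-type set and invoking that $H_{P_0} - H_{P_0'}\circ(\text{projection})$ is bounded on compacta, then choosing $t_0$ below $t_0'$ by at least this bound in each coordinate.
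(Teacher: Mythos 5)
There is a genuine gap, centered on your claim that $\textbf{M}_0'(\A_F)^1 \subset \textbf{M}_0(\A_F)^1$. This inclusion fails precisely when $k > k'+r$, which is the only non-trivial case (the paper dispatches $k = k'+r$ immediately because there $P_0' = P_0 \cap G'$). When $k > k'+r$, the Levi $M_0'$ is not contained in $M_0$: its anisotropic factor $G'_{k'}$ (the isometry group of the anisotropic kernel of $W_m$) sits inside the \emph{larger} group $G_{k'}$ (the isometry group of $W_{n,k'}$), which is no longer anisotropic and so meets $P_0$ in a proper parabolic. Your main argument therefore only treats the trivial case. You do flag the difficulty at the end, but the proposed cure — a generic "the discrepancy between the two Iwasawa decompositions is bounded on compacta, so absorb it into $w$ and shift $t_0$" — is not sound without more structure. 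In general, right-translating an unbounded torus element $a'$ by a compact element $k_0$ distorts the Iwasawa $A$-part in an unbounded way: in $SL_2(\RR)$, for $k_0 = \bigl(\begin{smallmatrix} 0 & -1 \\ 1 & 0 \end{smallmatrix}\bigr)$ and $a' = \mathrm{diag}(t, t^{-1})$, the Iwasawa $A$-part of $k_0 a'$ is $\mathrm{diag}(t^{-1}, t)$, so the discrepancy $H(k_0 a') - H(a')$ diverges. So "boundedness on compacta" does not follow from compactness alone.

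What actually makes the estimate work — and what the paper supplies — is a structural fact you never invoke: after writing $(M_0')^1 \simeq (A_{M_0'})^1 \times G'_{k'}$ and absorbing the $(A_{M_0'})^1$-part directly into $\textbf{M}_0(\A_F)^1$, one Iwasawa-decomposes the $G'_{k'}$-component $m' = u_0 m_0 k_0$ \emph{inside $G_{k'}$} (relative to $G_{k'}\cap P_0$), not inside $G$. This forces $k_0 \in \textbf{K}\cap \textbf{G}_{k'}$, and since $G_{k'}$ acts trivially on $X'_{k'}+Y'_{k'}$ it \emph{commutes} with $A_0'$. That commutativity is exactly what lets $k_0$ slide past $a'$ without distortion: $u' u_0 m_0 k_0 a' k' = u' u_0 m_0 a' (k_0 k')$, and now the compact piece $u' u_0 m_0$ goes into $w$, $a' \in A_0'(t_0') \subset A_0(t_0)$, and $k_0 k' \in \textbf{K}$. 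Without identifying this commutation (and choosing to Iwasawa-decompose inside $G_{k'}$ rather than $G$), the boundedness you want is simply unavailable. So the gap is not merely presentational: the central idea of the proof is missing.
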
 

\begin{proof}
When $k = k'+r$, the lemma is obvious because $P_0'=P_0 \cap G'$. Suppose $k > k'+r$ and we fix $w',t_0'$. Since $A_0' \ss A_0$, we can choose $t_0 \in \aa_0$ such that $\ A_0'(t_0') \ss A_0(t_0)$. We shall find a compact subset $w \ss U_0(\A_F) \textbf{M}_{0}^1$ such that $w'A_0'(t_0')\textbf{K}' \ss wA_0(t_0)\textbf{K}.$\\ Choose arbitrary $g \in w'A_0'(t_0')\textbf{K}'$ and decompose it $g=u'm'a'k'$, where $u'\in U_0',m' \in \textbf{M}_0'^1,a' \in A_0'(t_0'),k\in \textbf{K}'$.
Note that $U_0' \ss U_0, \ A_0'(t_0') \ss A_0(t_0)$ and $\textbf{K}' \ss \textbf{K}$, though $P_0'$ is not contained in $P_0$. Let $W_{m,k'}$ be the orthogonal complement of $X_{k'}'+Y_{k'}'$ in $W_m$ and $G_{k'}'$ its isometry group. Since $(M_0')^1=(A_{M_0'})^1 \times G_{k'}'$ and $A_{M_0'} \ss A_{M_0}$, we may assume $m' \in \textbf{G}_{k'}'(\A_F).$ Let $W_{n,k'}$ be the orthogonal complement of $X_{k'}'+Y_{k'}'$ in $W_n$ and put $G_{k'}$ its isometry group. Since $k>k'+r$, $G_{k'} \cap P_0$ is a proper parabolic subgroup of $G_{k'}$. Since $G_{k'}' \ss G_{k'}$, using the Iwasawa decomposition of $G_{k'}$ with respect to $G_{k'} \cap P_{0}$, we can decompose $m'$ in $\textbf{G}_{k'}$ as $u_0m_0k_0$, where $u_0\in \textbf{G}_{k'} \cap U_{0}, m_0 \in \textbf{G}_{k'} \cap \textbf{M}_0,k_0 \in \textbf{G}_{k'} \cap \textbf{K}$. Since $\textbf{G}_{k'}$ and $A_0'$ are commutative, $u'm'a'k'=u'u_0m_0k_0a'k'=u'u_0m_0a'k_0k$. This shows that we can find $w$ as desired.
\end{proof}

Now, we prove the rapidly decreasing property of $\Lambda_m^{T',P'}$.

\begin{lem} \label{rd}
Let $\phi \in \As(\textbf{G}), \phi' \in \As(N \rtimes \textbf{G}')$.
Then for every positive integer $N_1$ and $k \in \textbf{K}'$,
\[
\sup_{m \in \S^{P'} \cap \textbf{M}'^1} |\Lambda^{T',P'}_m(\phi \otimes \phi')(mk)| \cdot \|m\|_{G'}^{N_1}< \infty. 
\] 
\end{lem}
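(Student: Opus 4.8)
The plan is to mimic the classical proof of rapid decay of Arthur's truncation operator (as in \cite[Lemma~1.4]{A1}), adapted to the mixed-truncation setting, and then to reduce the mixed case to the ordinary case via the factorization structure of $(\phi_1\otimes\phi_2)_{Q,N}$. First I would fix $k\in\textbf{K}'$, absorb it into the argument (the estimate is uniform in $k$ over the compact set $\textbf{K}'$), and work with $g=mk$ for $m$ ranging over $\S^{P'}\cap\textbf{M}'(\A_F)^1$. Writing $m=u'a'\eta k'$ via the Siegel decomposition relative to $P'$, the point is to show that for $a'$ deep in the positive chamber the alternating sum defining $\Lambda_m^{T',P'}$ telescopes against the negativity constraint coming from $F^{P'}(\cdot,T')$, so that all the ``large'' contributions cancel.

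The key steps, in order, are: (1) Insert a partition of unity: following Arthur, one writes $1=\sum_{Q'\ss P'}\sum_{\delta\in Q'(F)\bs P'(F)}F^{Q'}(\delta g, T')\,\tau_{Q'}^{P'}(H_{Q'}(\delta g)-T')$ valid on $\S^{P'}$, so that $\Lambda_m^{T',P'}(\phi\otimes\phi')(g)=\sum_{Q'}\sum_\delta F^{Q'}(\delta g,T')\tau_{Q'}^{P'}(H_{Q'}(\delta g)-T')\,\Lambda_m^{T',P'}(\phi\otimes\phi')(g)$, and then use Lemma~\ref{mt} to re-express $\Lambda_m^{T',P'}$ in terms of $\Lambda_m^{T',Q'}$ composed with the compactly supported function $\Gamma_{Q'}^{P'}$. (2) On each piece one is reduced to bounding $\Lambda_m^{T',Q'}(\phi\otimes\phi')(\delta g)$ for $\delta g$ in a Siegel set relative to $Q'$, with $H_{Q'}(\delta g)-T'$ lying in the compact support of $\Gamma_{Q'}^{P'}(\cdot,S')$ — this controls the $\aa_{Q'}^{P'}$-component, leaving only the $\aa_{Q'}$-direction (equivalently the $A_{M_{Q'}}$-direction) to decay in, which is exactly the dominant direction in a Siegel set. (3) For the genuinely non-reductive input I would invoke the definition $(\phi_1\otimes\phi_2)_{Q,N}(g)=\int_{[N]}\phi_{1,Q}(ng)\phi'_{2,Q'}(n,g)\,dn$ and the moderate-growth hypotheses on $\phi_1$ (an automorphic form on $\textbf{G}$, so its constant terms along $Q$ are sums of functions of the form $e^{\la\lambda,H_Q(\cdot)\ra}$ times automorphic forms on $\textbf{M}_Q$) and on $\phi'$ (moderate growth in the integrated sense from Definition~2.1); the integral over the compact set $[N]$ preserves moderate growth, so $(\phi_1\otimes\phi_2)_{Q,N}$ is an automorphic form on $\textbf{G}'$ by the Proposition, and its constant term behaves like a finite sum of exponentials $e^{\la\lambda_i,H_{Q'}(\cdot)\ra}$ times cusp-form-valued functions. (4) The cuspidality (more precisely, the rapid decay of cusp forms, or for the full automorphic form the standard Arthur estimate on differences $\phi-\sum_Q\ldots$) combined with the alternating-sign cancellation then gives, on each Siegel piece, a bound $\|m\|^{-N_1}$ for arbitrary $N_1$; summing over the finitely many $Q'$ and using that the sums over $\delta$ are effectively finite on Siegel sets yields the claim.

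The main obstacle I expect is Step (3)--(4): making the cancellation argument work when one of the two factors, $\phi'$, is not an automorphic form on a reductive group but only satisfies the ``integrated'' moderate-growth condition of Definition~2.1, so that one cannot directly quote Arthur's estimates on $\phi'$ itself. The resolution should be to run Arthur's telescoping argument on the $\phi_1$-factor (which \emph{is} a genuine automorphic form on $\textbf{G}$, hence has the usual constant-term expansion and the associated cancellation of exponentials under $\Lambda$), while treating $\int_{[N]}|\phi'_{2,Q'}(n,\cdot)|\,dn$ merely as a factor of at-most-polynomial growth that gets dominated by the super-polynomial decay extracted from $\phi_1$. Concretely, one shows that for each $\delta$ the $\phi_1$-side of $(\phi_1\otimes\phi_2)_{Q,N}$ minus its truncation contributes only exponentials $e^{\la\lambda,H_{Q'}\ra}$ with $\lambda$ a nonnegative combination of elements of $\hat{\Delta}_{Q'}$, which on the region cut out by $F^{Q'}(\cdot,T')$ and $\tau_{Q'}^{P'}$ is forced to decay faster than any $\|m\|^{-N_1}$; one then absorbs the polynomially-bounded $\phi'$-integral and the bounded $\Gamma$-factor. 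Finally one should double-check uniformity of all constants over $k\in\textbf{K}'$ and over the finitely many parabolics, and that the identification of $\aa_P$ with $\aa_{P'}$ and of the height functions (via $G'\hra G$) is used consistently so that $H_{Q'}$ and $H_Q$ match up where Lemma~\ref{mt} requires it.
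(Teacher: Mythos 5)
Your Steps (1)–(2) follow the same general pattern as the paper's proof (partition of unity via $F^{P_1'}$-functions and a rearrangement into $\sigma$-type sums), though the paper does not use Lemma~\ref{mt} at this stage; it uses the binomial identity \eqref{bin} to reorganize the sum with $\sigma^{Q'}_{P_1'}$, then invokes the decomposition from \cite[Lemma 8.2.1(1)]{LR}. That discrepancy is cosmetic. The substantive problem is in Steps (3)–(4), and in particular in the ``resolution'' you propose to the obstacle you flagged.

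You cannot run the telescoping on the $\phi_1$-factor alone and treat $\phi'$ as a merely polynomially bounded spectator. The alternating sum $\sum_{R} (-1)^{\dim\aa_R}\,\phi_R(ng)\,\phi'_{R'}(n,g)$ couples the two factors: in the simplest rank-one picture it is $\phi\,\phi' - \phi_{P_0}\phi'_{P_0'} = (\phi-\phi_{P_0})\,\phi' + \phi_{P_0}\,(\phi'-\phi'_{P_0'})$, and while the first term is controlled by the rapid decay of the cuspidal projection of $\phi$, the second term has $\phi_{P_0}$ of polynomial growth multiplied by $(\phi'-\phi'_{P_0'})$. Unless you \emph{also} prove rapid decay of the cuspidal-projection-type difference for $\phi'$, this term is not controlled, and your argument leaves it unbounded. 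Note also that the mixed truncation is \emph{not} the ordinary Arthur truncation of $(\phi\otimes\phi')_{G,N}$ viewed as an automorphic form on $\textbf{G}'$: the mixed constant terms involve $\phi_{1,Q}$ for parabolics $Q$ of $G$, not merely constant terms of $(\phi\otimes\phi')_{G,N}$ along parabolics of $G'$, so the ``$(\phi\otimes\phi')_{G,N}$ is an automorphic form on $\textbf{G}'$, hence Arthur's estimate applies'' shortcut is also unavailable.

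The paper's way around this is precisely the piece you were hoping to avoid: the alternating sum over the diagonally embedded parabolics $\bar R$ between $\bar P_1$ and $\bar Q$ is rewritten, via \cite[Lemma 8.2.1(1)]{LR}, as a signed sum over \emph{all} parabolic subgroups $\tilde S = S_1 \times S_2'$ of $G\times G'$ between $\bar P_1$ and $\bar Q$ with $S_1 \cap S_2 = P_1$, and each term then factors as $\phi_{S_1,Q}(ng)\cdot\phi'_{S_2',Q'}(n,g)$ where \emph{both} $\phi_{S_1,Q}$ and $\phi'_{S_2',Q'}$ are Arthur-type alternating sums. The estimate from \cite[pp.~92--96]{A2} is then applied to $\textbf{G}$ and to $\textbf{G}'$ separately, giving a bound $\|g\|^{-N_2\varepsilon}$ for each factor (after integrating over $[N]$, which is where the integrated moderate-growth condition in Definition~2.1 enters), and the product of the two bounds supplies the required $\|g\|^{-2N_2\varepsilon}$. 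Lemma~\ref{kl}, which you do not mention, is what makes the Siegel-set estimate on the $\textbf{G}$-side compatible with the Siegel-set coordinates on $\textbf{G}'$ when the Witt index of $W_n$ exceeds $k'+r$. Without both the LR-type redistribution of the alternating signs and the two-sided application of Arthur's estimate, the argument does not close.
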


\begin{proof} 
It is enough to show  when $P'=G'$ and $k=1$.  Fix $T' \in \aa_0'$ and  for each parabolic subgroup $P'$ of $G'$, choose $\mathcal{S}^{P'}$ so that $\textbf{G}'=P'(F)\S^{P'}$. \cite[Lemma 6.4]{A1} says that
\beq \label{pu} 
\sum_{P_1' \ss P'} \sum_{\delta_1 \in P_1'(F) \bs P'(F)}
F^{P_1'}(\delta_1x,T') \tau_{P_1'}^{P'}(H_{P_1'}(\delta_1x)-T')=1.
\eeq
By substituting (\ref{pu}) into the definition of $\Lambda_m^{T'}(\phi_1 \otimes \phi_2)(g)$, we have
\begin{align*}
&\Lambda_m^{T'}(\phi_1 \otimes \phi_2)(g)
\\&=
\sum_{P'} (-1)^{\dim \mathfrak{a}_{P'}}\sum_{\delta \in P'(F)\bs G'(F)} 
(\phi_1 \otimes \phi_2)_{P,N}(\delta g)  \hat{\tau}_{P'}(H_{P'}(\delta g)-T') \  
\\&=
\sum_{P_1' \ss P'} (-1)^{\dim \mathfrak{a}_{P'}}\sum_{\delta \in P'(F)\bs G'(F)} 
\left( \sum_{\delta_1 \in {P_1'}(F) \bs P'(F)} F^{P_1'}(\delta_1\delta g,T') 
\tau_{P_1'}^{P'}(H_{P_1'}(\delta_1\delta g)-T') \right)
\\&\times 
(\phi_1 \otimes \phi_2)_{P,N}(\delta g)  \hat{\tau}_{P'}(H_{P'}(\delta g)-T')
\\&=
\sum_{P_1' \ss P'} (-1)^{\dim \mathfrak{a}_{P'}} \sum_{\delta \in {P_1'}(F)\bs G'(F)}
F^{P_1'}(\delta g,T')  (\phi_1 \otimes \phi_2)_{P,N}(\delta g)  
\tau_{P_1'}^{P'}(H_{P_1'}(\delta g)-T') \hat{\tau}_{P'}(H_{P'}(\delta g)-T').
\end{align*}
(In the last equality, we used the fact 
$(\phi_1 \otimes \phi_2)_{P,N}(\delta g)  \hat{\tau}_{P'}(H_{P'}(\delta g)-T')
=(\phi_1 \otimes \phi_2)_{P,N}(\delta_1\delta g)  \hat{\tau}_{P'}(H_{P'}(\delta_1 \delta g)-T')$.)
\par

Using (\ref{bin}), we have
\begin{align*}
\Lambda_m^{T'}(\phi_1 \otimes \phi_2)(g)
&=
\sum_{P_1' \ss P' \ss Q'} (-1)^{\dim {\mathfrak{a}_{P'}}}  \sum_{\delta \in P_1'(F)\bs G'(F)} 
F^{P_1'}(\delta g,T')  (\phi_1 \otimes \phi_2)_{P,N}(\delta g)  \sigma_{P_1'}^{Q'}(H_{P'}(\delta g)-T')
\\&= 
\sum_{P_1' \ss Q'} \sum_{\delta \in P_1'(F) \bs G'(F)}
F^{P_1'}(\delta g,T')\sigma_{P_1'}^{Q'}(H_{P_1'}(\delta g)-T')  \left(\int_{[N]} \phi_{P_1,Q}(n,\delta g )dn \right)
\end{align*}
where 
\[
\phi_{P_1,Q}(n,g)=\sum_{P_1 \ss R \ss Q}(-1)^{\dim\aa_R} \cdot (\phi \otimes \phi')_{\bar{R}}(n, g). 
\]
\par
By applying the similar argument in the proof of \cite[Lemma 8.2.1(1)]{LR}, one can show that 
\[  
(-1)^{\dim \aa_{P_1}} \phi_{P_1,Q}(n,g)
= \sum_{\bar{P_1} \ss \tilde{S} \ss \bar{Q}} (-1)^{\dim \aa_{\tilde{S}}} \phi_{\tilde{S},\bar{Q}}(n,g), 
\]
where $\tilde{S}=(S_1,S_2')$ runs over all parabolic subgroups of $G \times G'$ 
between $\bar{P_1}$ and $\bar{Q}$ such that $S_1 \cap S_2=P_1$
and
\[
\phi_{\tilde{S},\bar{Q}}(n,g)
=\sum_{\tilde{S} \ss \tilde{R} \ss \bar{Q}}(-1)^{\dim\aa_{\tilde{R}}} \cdot (\phi \otimes \phi')_{\tilde{R}}(n, g). 
\]
Note that 
\[ |  \phi_{\tilde{S},\bar{Q}}(n,g)  |= \left|\phi_{S_1,Q}(ng) \right| \cdot | \phi'_{S_2',Q}(n,g)  |
\]
where \[\phi_{S_1,Q}(h)=\sum_{S_1\ss R \ss Q} (-1)^{\dim\aa_{{R}}} \phi_R(h), \quad h \in \textbf{G}  \]
and 
\[ \phi'_{S_2',Q'}(n,g)=\sum_{S_2'\ss R' \ss Q'} (-1)^{\dim\aa_{{R'}}} \phi'_{R'}(n,g), \quad (n,g) \in N \rtimes \textbf{G}'.\]

Choose $\delta\in G'(F)$ and $g\in \S'$ such that 
$F^{P_1'}(\delta g,T')\sigma_{P_1'}^{Q'}(H_{P_1'}(\delta g)-T')\ne 0$. Note that $\sigma_{P_1'}^{Q'}$ is a characteristic function by \cite[Lemma 6.1]{A1}. Therefore, by Lemma $\ref{kl}$, we can take $T \in \aa_{0}$ positive enough such that the projection of $T$ to $\aa_{P_{k'}}\simeq \aa_0'$ is $T'$ and $P'(F)\S^{P'}(T') \ss P(F)\S^{P}(T)$ and $F^{P_1}(\delta g,T)\sigma_{P_1}^{Q}(H_{P_1}(\delta g)-T)\ne 0$. The property (iv) of the heights function in \cite[I.2.2]{Mo} shows that there are positive constants $ c_1,\epsilon$ such that 
\[
e^{\| H_{P_1}(a)\|_{P_1}}=e^{\| H_{P_1'}(a)\|_{P_1'}}> c_1 \|a\|^{\epsilon} \, \ \text{ for all } a\in A_{P_1'}(\RR)^0.
\]
If we apply the same arguments in \cite[page 92-96]{A2}  to $\textbf{G}'$ and $\textbf{G}$, respectively, there exists a positive constant $c_{P_1',Q'}$ such that for all $g\in \S'$ and $\delta \in  G'(F)$ with $F^{P_1'}(\delta g,T')\sigma_{P_1'}^{Q'}(H_{P_1'}(\delta g)-T')\ne 0$,
\[ 
 \| g\|_{G}^{N_2\epsilon} 
\cdot \int_{[N]} |\phi_{S_1,Q}(n\delta g)|dn <c_{P_1',Q'} \quad ,   \quad \| g\|_{G'}^{N_2\epsilon} 
\cdot \int_{[N]} |\phi'_{S_2',Q'}(n,\delta g)|dn < c_{P_1',Q'} \]
for all positive integer $N_2$.
Furthermore, there are positive constant $c_{T'}, N_3$ such that 
\[ 
\sum_{P_1' \ss Q'} \sum_{\delta \in P_1'(F) \bs G'(F)}
F^{P_1'}(\delta g,T')\sigma_{P_1'}^{Q'}(H_{P_1'}(\delta g)-T') <c_{T'} \| g\|_{G'}^{N_3}  \ , \ g\in \textbf{G}'^1.
\] 
(See \cite[page 96-97]{A2}.)
\par

Thus if we choose $N_2$ so that $2N_2 \epsilon \ge N_1+N_3$, 
it follows
\[
\sup_{g \in \S' \cap \textbf{G}'^1} |\Lambda^{T'}_m(\phi \otimes \phi')(g)| \cdot \|g\|_{G'}^{N_1}< \infty. 
\]
This completes the proof.
\end{proof}

\section{\textbf{Regularized trilinear periods}}
In this section, we shall define the regularized trilinear periods using the mixed truncation introduced in the previous section. We keep the same notations and conventions as in Section 2.

Let $\textbf{C}$ be the kernel of $pr$ and $\textbf{H}$ be either $\textbf{G}$ or $N \rtimes \textbf{G}'$ or $\textbf{G}'$. Then
\[\textbf{C}=\begin{cases}\{\pm1\}, \quad  \quad \text{ if } (E,\epsilon) = (F,-1),\\ \{1\},\quad \quad  \ \ \text{ if } (E,\epsilon) \ne (F,-1)\end{cases}.
\]
When $(E,\epsilon) = (F,-1)$, for $f\in  \As(\textbf{H})$, we call $f$ \emph{genuine} if \[f(\xi g)= \xi f(g) \quad \quad (\xi \in \textbf{C}, \ g\in \textbf{H}).\] \\ 
When $(E,\epsilon) \ne (F,-1)$, for any $f\in  \As(\textbf{H})$, we have
\[ f(\xi g)= f(g) \quad \quad \ (\xi \in \textbf{C}, \ g\in \textbf{H}),
\]
because $\textbf{C}=\{1\}$.

Put 
\[\As^{gen}(\textbf{H}):=\begin{cases}\As(\textbf{H}), \quad \  \quad \quad \quad \quad \quad \quad \quad \quad \text{ if $(E,\epsilon) \ne (F,-1)$} \\ \la f \in \As(\textbf{H}) \mid \text{$f$ is genuine} \ra, \quad \text{if } (E,\epsilon) = (F,-1)\end{cases}.\]
Then for any $f \in \As^{gen}(\textbf{H})$, there is a character $\chi_f:\textbf{C} \to \{\pm1\}$ such that 
\[ f(\xi g)= \chi_f(\xi)f(g) \quad \quad (\xi \in \textbf{C}, \ g\in \textbf{H}).
\]

\noindent Let $\mf{T}$ be the set of characters of $\textbf{C}^3$ satisfying the condition that for any $\eta=(\eta_1,\eta_2,\eta_3)\in \mf{T}$, the product of $\eta_1$, $\eta_2$, and $\eta_3$ is equal to 1, the trivial character of $\textbf{C}$.
Let $\As(\textbf{G},\textbf{G}')$ denote the space of triplets $(\phi_1,\phi_2,\phi_3) \in \As^{gen}(\textbf{G}) \times  \As^{gen}(N \rtimes \textbf{G}') \times  \As^{gen}(\textbf{G}')$ which satisfy 
\[ \phi_i(\xi g)= \eta_i(\xi) \phi_i(g) \quad \quad (\xi \in \textbf{C}, \ g\in \textbf{G}', \ i=1,2,3)
\]
for some $\eta=(\eta_1,\eta_2,\eta_3)\in \mf{T}$.

For $(\phi,\phi',\phi'') \in \As(\textbf{G},\textbf{G}')$, we consider the following integral
\begin{equation} \label{reg} 
\int_{G'(F)\bs G'(\A_F)^1} \Lambda_m^{T'}(\phi \otimes \phi')(g) \phi''(g) dg.
\end{equation}
By the assumption of $\chi$, the integrand is well defined on $G'(\A_F)$ and thanks to Lemma \ref{rd}, this integral converges absolutely.
\par

\begin{prop} \label{poly} 
The integral in $(\ref{reg})$ is a function of the form $\sum_{\lambda}p_{\lambda}(T')e^{\la \lambda,T'\ra}$, 
where $p_{\lambda}$ is a polynomial in $T'$ and $\lambda$ can be taken from the set 
\[
\bigcup_{P'} \{ \lambda_1 + \lambda_2 + \lambda_3 + \rho_{P} \ | \ 
\lambda_1 \in \mc{E}_{P}(\phi_{P}),\ \lambda_2 \in \mc{E}_{P'}(\phi_{P'}), \ \lambda_3 \in  \mc{E}_{P'}(\phi''_{P'}) \}.
\]
\end{prop}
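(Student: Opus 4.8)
\textbf{Proof strategy for Proposition \ref{poly}.}
The plan is to imitate the standard argument of Jacquet--Lapid--Rogawski for ordinary regularized periods (cf. \cite[Section 8]{LR}, \cite[Proposition 5.2]{IY}), replacing the ordinary truncation by the mixed truncation $\Lambda_m^{T'}$ and the pair $(\phi,\phi')$ by the product function $(\phi\otimes\phi')_{P,N}$. The key point is that, by Proposition 2.x, the function $(\phi\otimes\phi')_{P,N}$ lies in $\As_{P}(\textbf{G}')$ for every parabolic $P$ of $G$; in particular it admits a finite exponential-polynomial decomposition of the form \eqref{dec} along each $P'$, with exponents in $\mc{E}_{P'}((\phi\otimes\phi')_{P,N})$. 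Thus the mixed truncation behaves, as a function of $T'$, exactly like the truncation of a genuine automorphic form on $\textbf{G}'$.

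First I would use Lemma \ref{mt} (the change-of-base-point formula for $\Lambda_m^{T'}$): writing $T'$ as $T'_0+S'$ for a fixed auxiliary $T'_0$ and a variable $S'$, one gets
\[
\Lambda_m^{T'_0+S'}(\phi\otimes\phi')(g)
=\sum_{Q'}\sum_{\delta\in Q'(F)\bs G'(F)}
\Lambda_m^{T'_0,Q'}(\phi\otimes\phi')(\delta g)\,\Gamma_{Q'}^{G'}(H_{Q'}(\delta g)-T'_0,S').
\]
Substituting this into \eqref{reg} and unfolding the sum over $\delta\in Q'(F)\bs G'(F)$ against the integral over $[G']^1$, the integral becomes a sum over parabolics $Q'$ of integrals over $U'(\A_F)\textbf{M}'(F)^1\bs\textbf{G}'(\A_F)^1$ of $\Lambda_m^{T'_0,Q'}(\phi\otimes\phi')(g)\,\phi''_{Q'}(g)$ against $\Gamma_{Q'}^{G'}(H_{Q'}(g)-T'_0,S')$. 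The inner integral over $U'(\A_F)\bs\textbf{M}'(\A_F)^1$ and over $\textbf{K}'$ converges (using the rapid decay of $\Lambda_m^{T'_0,Q'}$ from Lemma \ref{rd} together with moderate growth of $\phi''_{Q'}$), and what remains is an integral over $\aa_{Q'}$ of the product of $\Gamma_{Q'}^{G'}(X-T'_0,S')$ with an exponential-polynomial in $X$ whose exponents come from $\mc{E}_{Q'}((\phi\otimes\phi')_{Q,N})+\mc{E}_{Q'}(\phi''_{Q'})$ — this is where the substitution of the decomposition \eqref{dec} for both factors enters.

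The remaining step is the elementary integration lemma: for a compactly supported function such as $\Gamma_{Q'}^{G'}(\,\cdot-T'_0,S')$ on $\aa_{Q'}$, the integral $\int_{\aa_{Q'}}\Gamma_{Q'}^{G'}(X-T'_0,S')\,p(X)e^{\la\lambda,X\ra}\,dX$ against an exponential-polynomial is itself an exponential-polynomial in $S'$ (hence in $T'$) with the same exponent $\lambda$ (cf. \cite[Lemma 8.4.1]{LR} or the computations in \cite[Section 2]{IY}). Collecting the contributions of all $Q'$ and translating the dependence on $S'$ back to dependence on $T'$ gives the asserted form $\sum_\lambda p_\lambda(T')e^{\la\lambda,T'\ra}$ with $\lambda$ ranging over $\bigcup_{P'}\{\lambda_1+\lambda_2:\lambda_1\in\mc{E}_{P'}((\phi\otimes\phi')_{P,N}),\ \lambda_2\in\mc{E}_{P'}(\phi''_{P'})\}$.

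I expect the main obstacle to be purely bookkeeping rather than conceptual: one must justify the unfolding and the interchange of the sum over $\delta$ with the integral over $[G']^1$ (absolute convergence is guaranteed by Lemma \ref{rd}, but the reduction of the domain from $[G']^1$ to a Siegel set and the use of $\textbf{G}'(\A_F)^1=P_0'(F)\S^{P_0'}$ must be handled carefully in the metaplectic setting, keeping track of the genuineness characters $\chi_i$ so that the integrand descends to $G'(\A_F)$), and one must check that no new exponents are created — i.e. that the exponents of $\Lambda_m^{T'_0,Q'}(\phi\otimes\phi')$ along intermediate parabolics are still controlled by $\mc{E}_{P'}((\phi\otimes\phi')_{P,N})$, which follows from the same computation that proves Lemma \ref{rd} together with the transitivity of constant terms. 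Everything else is a direct transcription of the reductive-group argument, using that $(\phi\otimes\phi')_{P,N}\in\As_P(\textbf{G}')$.
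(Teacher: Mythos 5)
Your proposal follows the paper's own argument almost verbatim: fix a base point, apply Lemma \ref{mt}, unfold the sum over $Q'(F)\backslash G'(F)$, pass to the Iwasawa decomposition, invoke the exponential-polynomial decomposition \eqref{dec} for both $(\phi\otimes\phi')_{P,N}$ and $\phi''_{P'}$, and observe that integrating against the compactly supported $\Gamma_{Q'}^{G'}$ yields a polynomial exponential in the truncation parameter (the paper cites \cite[Lemma 2.2]{A2} for this last step). The subtleties you flag — absolute convergence via Lemma \ref{rd}, and checking that the partial truncation $\Lambda_m^{T'_0,Q'}$ contributes no exponents outside the stated set — are precisely what the paper handles (implicitly, via the Iwasawa rewriting and the rapid-decay lemma), so this is the same proof.
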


\begin{proof} 
Since $\Gamma_{P'}^{G'}(X-T',S')$ is compactly supported on $X$, 
using Lemma \ref{rd}, 
we integrate the equation in Lemma \ref{mt} against $\phi''(g)$ over $G'(F) \bs G'(\A_F)^1$ to obtain 
\begin{align*}
&\int_{G'(F) \bs G'(\A_F)^1}\Lambda_m^{T'+S'}(\phi \otimes \phi')(g)\phi''(g) dg 
\\&=
\sum_{P'}  \int_{G'(F) \bs G'(\A_F)^1}  
\left(\sum_{\delta \in P'(F) \bs G'(F)}  
\Lambda_m^{T',P'}(\phi \otimes \phi')(\delta g)  \phi''(g) \cdot\Gamma_{P'}^{G'}(H_{P'}(\delta g)-T',S') 
\right)dg
\\&= 
\sum_{P'}  \int_{P'(F) \bs G'(\A_F)^1}  
\Lambda_m^{T',P'}(\phi \otimes \phi')( g)  \phi''(g) \cdot\Gamma_{P'}^{G'}(H_{P'}( g)-T',S') dg
\\&=
\sum_{P'}  \int_{K'}\int_{M'(F) \bs M'(\A_F)^1}  \int_{\aa_{P'}}^{\#}
\Lambda_m^{T',P'}(\phi \otimes \phi')(e^Xmk)  \phi_{P'}''(e^Xmk) 
\cdot\Gamma_{P'}^{G'}(X-T',S')e^{-2\la \rho_{P'},X\ra} dXdmdk.
\end{align*}

(In the second equality, we used $\phi''(g)=\phi''(\delta g)$ for all $\delta \in G(F)$. 
The symbol $\int_{\aa_{P'}}^{\#}$ in the last equality represents 
the regularized integral of a polynomial exponential function over a cone in a vector space. 
See \cite[Sec 2]{JLR}.)
\par
 
The inner integral over $\aa_{P'}$ can be expressed in terms of 
the Fourier transform of $\Gamma_{P'}^{G'}(\cdot,T')$ evaluated at $\lambda_1+\lambda_2+\lambda_3 +\rho_{P}$  
for $
\lambda_1 \in \mc{E}_{P}(\phi_{P}),\ \lambda_2 \in \mc{E}_{P'}(\phi_{P'}), \ \lambda_3 \in  \mc{E}_{P'}(\phi''_{P'})$. 
Thus by \cite[Lemma 2.2]{A2}, 
the $\#$-integral is a polynomial exponential function in $S'$ 
whose exponents are 
$\{ \lambda_1 + \lambda_2 + \lambda_3 + \rho_{P} \ | \ 
\lambda_1 \in \mc{E}_{P}(\phi_{P}),\ \lambda_2 \in \mc{E}_{P'}(\phi_{P'}), \ \lambda_3 \in  \mc{E}_{P'}(\phi''_{P'}) \}_{Q' \supset P'}$.
\end{proof}

\begin{defn} 
Let $\As_0(\textbf{G},\textbf{G}')$ be the subspace of triplets in $\As(\textbf{G},\textbf{G}')$
such that the polynomial corresponding to the zero exponent of (\ref{reg}) is constant. 
For $(\phi,\phi',\phi'') \in \As_0(\textbf{G},\textbf{G}')$, 
we define its regularized period $\mc{P}(\phi,\phi',\phi'')$ as its value $p_0(T')$. 
We also write 
\[ 
\mc{P}(\phi,\phi',\phi'')
=  \int_{G'(F) \bs G'(\A_F)^1}^{*} (\phi \otimes \phi')_{G,N}(g)\phi''(g) dg.
\]
\end{defn}

For a parabolic subgroup $P'$ of $G'$, we set the $\#$-integral  
\[
\mc{P}_{P'}^{T'}(\phi,\phi',\phi'')
=\int_{P'(F) \bs G'(\A_F)^1}^{\#} \Lambda_m^{T',P'}(\phi \otimes \phi')(g)\phi_{P'}''(g)\tau_{P'}(H_{P'}(g)-T') dg
\] 
as the triple integral
\[ 
\int_{K'} \int_{M'(F) \bs M'(\A_F)^1} \int_{\mf{a}_{P'}}^{\#}
\Lambda_m^{T',P'}(\phi \otimes \phi')(e^Xmk)\phi_{P'}''(e^Xmk)\tau_{P'}(X-T') e^{-2\la \rho_{P'},X\ra }dX dm dk.
\]
Since the functions $m\longrightarrow \Lambda_m^{T',P'}(\phi \otimes \phi')(mk)$ are rapidly decreasing, 
$\mc{P}_{P'}^{T'}(\phi,\phi',\phi'')$ exists if and only if 
\[ 
\la \lambda_1+\lambda_2+\lambda_3 +\rho_{P}, \omega^{\vee} \ra \ne 0
\ \ \text{for all} \ \omega^{\vee} \in (\hat{\Delta}^{\vee})_{P'}, \  
\lambda_1 \in \mc{E}_{P}(\phi_{P}),\ \lambda_2 \in \mc{E}_{P'}(\phi_{P'}), \ \lambda_3 \in  \mc{E}_{P'}(\phi''_{P'}) 
\]
by \cite[Lemma 3]{JLR}.
\par

Let $\As(\textbf{G},\textbf{G}')^*$ be the space of all triplets $(\phi,\phi',\phi'') \in \As(\textbf{G},\textbf{G}')$ 
such that 
\[ 
\la \lambda_1+\lambda_2+\lambda_3+\rho_{P}, \omega^{\vee} \ra \ne 0
\ \ \text{for all} \ \omega^{\vee} \in (\hat{\Delta}^{\vee})_{P'}, \  
\lambda_1 \in \mc{E}_{P}(\phi_{P}),\ \lambda_2 \in \mc{E}_{P'}(\phi_{P'}), \ \lambda_3 \in  \mc{E}_{P'}(\phi''_{P'}) 
\]
for all proper parabolic subgroups $P'$ of $G'$. 
\par

\begin{prop} \label{inv} 
Let $(\phi,\phi',\phi'') \in \As(\textbf{G},\textbf{G}')^*$. 
Then the following statements hold. 
\begin{enumerate}
\item 
$\sum_{P'} \mc{P}_{P'}^{T'}(\phi,\phi',\phi'')$ is independent of $T'$.
\item 
$\mc{P}(\phi,\phi',\phi'')=\sum_{P'} \mc{P}_{P'}^{T'}(\phi,\phi',\phi'')$.
It says $\As(\textbf{G},\textbf{G}')^*\subset \As_0(\textbf{G},\textbf{G}')$.

\item 
The regularized period $\mathcal{P}$ is a right $N(\A_F) \rtimes G'(\A_F)^1$-invariant linear functional on $\As(\textbf{G},\textbf{G}')^*$.
\item
If $g \to (\phi \otimes \phi')_{G,N}(g) \phi''(g)$ is integrable over $G'(F) \bs G'(\A_F)^1$, then
\[\mc{P}(\phi,\phi',\phi'')=\int_{G'(F) \bs G'(\A_F)^1} (\phi \otimes \phi')_{G,N}(g) \cdot \phi''(g) dg.
\]
\end{enumerate}
\end{prop}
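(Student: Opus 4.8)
The plan is to mimic the standard argument from Lapid--Rogawski and Jacquet--Lapid--Rogawski (as adapted in \cite{IY}), using the combinatorial identity of Lemma \ref{mt} and the rapid decay from Lemma \ref{rd} as the two essential inputs. Throughout we work with a triplet $(\phi,\phi',\phi'') \in \As(\textbf{G},\textbf{G}')_{\chi}^*$, so that all the $\#$-integrals $\mc{P}_{P'}^{T'}$ are defined by \cite[Lemma 3]{JLR}.

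\emph{Step 1 (independence of $T'$, part (i)).} I would start from Lemma \ref{mt} applied with $P'=G'$, integrate against $\phi''$ over $[G']^1 = G'(F)\bs G'(\A_F)^1$, and unfold the sum over $\delta \in Q'(F)\bs G'(F)$ to collapse each term to an integral over $P'(F)\bs G'(\A_F)^1$, exactly as in the proof of Proposition \ref{poly}. This expresses the integral $(\ref{reg})$ at $T'+S'$ as $\sum_{P'} \mc{P}_{P'}^{T'}(\phi,\phi',\phi'')$ plus correction terms involving $\Gamma_{P'}^{G'}$; after the Iwasawa decomposition and the regularized $\aa_{P'}$-integral, one sees that the $S'$-dependence is governed entirely by the Fourier transforms of the $\Gamma_{P'}^{G'}(\cdot,S')$. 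Differentiating in $S'$ (or invoking \cite[Lemma 2.2]{A2}) and using the combinatorial identity $(\ref{ll0})$ to telescope the $\Gamma$-terms, the result is that $\sum_{P'}\mc{P}_{P'}^{T'}$ is annihilated by the relevant difference operators in $T'$, hence is a polynomial exponential with only the zero exponent surviving and constant coefficient there. This is the key point where the hypothesis $(\phi,\phi',\phi'')\in\As(\textbf{G},\textbf{G}')_{\chi}^*$ is used: it guarantees no nonzero exponent $\lambda_1+\lambda_2$ has a vanishing pairing against a coweight, so no genuine polynomial growth in $T'$ appears.

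\emph{Step 2 (identification with $p_0(T')$, part (ii)).} Taking $S'=0$ in the identity of Step 1, or more directly comparing with the asymptotic expansion in Proposition \ref{poly}, one identifies $\sum_{P'}\mc{P}_{P'}^{T'}(\phi,\phi',\phi'')$ with the constant term $p_0(T')$ of $(\ref{reg})$, which by definition is $\mc{P}(\phi,\phi',\phi'')$; since by Step 1 this sum is independent of $T'$ it is genuinely the constant $p_0$, so in particular $\As(\textbf{G},\textbf{G}')_{\chi}^*\subset\As_0(\textbf{G},\textbf{G}')_{\chi}$.

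\emph{Step 3 (right invariance, part (iii)).} For $h \in N(\A_F)\rtimes \textbf{G}'(\A_F)^1$, I would check that right translation by $h$ preserves $\As(\textbf{G},\textbf{G}')_{\chi}^*$ (the exponent sets $\mc{E}_{P'}$ are translation-invariant since $H_{P'}$ is right-$K'$-invariant and translation only shifts by a bounded amount, not changing which $\lambda_i$ occur) and that $\Lambda_m^{T'}$ and the pairing against $\phi''$ transform compatibly. Using the expression $\mc{P}=\sum_{P'}\mc{P}_{P'}^{T'}$ from part (ii), independence of $T'$ lets one absorb the $H_{P'}$-shift coming from right translation by changing $T'$, so $\mc{P}(\pi(h)\phi, \cdot,\cdot)$ equals $\mc{P}(\phi,\cdot,\cdot)$; linearity is clear from the construction. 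The $N(\A_F)$-invariance is immediate since $(\phi\otimes\phi')_{G,N}$ already integrates over $[N]$.

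\emph{Step 4 (agreement with the convergent integral, part (iv)).} If $g\mapsto (\phi\otimes\phi')_{G,N}(g)\phi''(g)$ is integrable over $[G']^1$, one runs the standard argument: write $\Lambda_m^{T'}(\phi\otimes\phi')\phi'' = (\phi\otimes\phi')_{G,N}\phi'' - \big((\phi\otimes\phi')_{G,N}-\Lambda_m^{T'}(\phi\otimes\phi')\big)\phi''$, integrate over $[G']^1$, and show the second integral tends to $0$ as $T'\to\infty$ in all directions (by the geometry of the truncation: the difference is supported where some $\hat\tau_{P'}(H_{P'}(\cdot)-T')$ fails, a region pushed off to infinity, combined with the rapid decay of the untruncated integrable function on Siegel sets). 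Since by part (i) the left side is the constant $\mc{P}(\phi,\phi',\phi'')$, independent of $T'$, passing to the limit gives the claim.

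\emph{Main obstacle.} The routine combinatorics of Steps 1--2 are standard; the delicate point I expect to be the real work is verifying in Step 1 that the regularized $\aa_{P'}$-integrals behave correctly, i.e. that the $\#$-regularization in \cite{JLR} interacts with the $N$-integration defining $(\phi\otimes\phi')_{P,N}$ without introducing spurious poles or exponents beyond those listed in Proposition \ref{poly} --- in other words, that the presence of the extra unipotent integral over $[N]$ does not disturb the Lapid--Rogawski formalism. One must confirm that $(\phi\otimes\phi')_{P,N}$ genuinely lies in $\As_{P'}(\textbf{G}')$ (which is Proposition \ref{poly}'s predecessor in the excerpt) and that its exponent set $\mc{E}_{P'}$ is finite and stable under the operations above; granting that, the rest follows the template of \cite[Section 5]{IY} essentially verbatim.
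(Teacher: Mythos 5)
Your overall architecture mirrors the paper's: Lemma \ref{mt} plus the Langlands combinatorics for parts (i)--(ii), translation invariance reduced to (i)--(ii), and a limit argument for (iv). Parts (i), (ii), (iv) are described with enough fidelity that I believe you could fill in the details (the paper uses $(\ref{ll2})$ rather than $(\ref{ll0})$ in the final telescoping, but that is a near-cosmetic difference since $(\ref{ll2})$ is a corollary of $(\ref{ll0})$; the paper also reduces the nested $\#$-integrals over $\aa_{P'}$ and $\aa_{Q'}^{P'}$ to one over $\aa_{Q'}$ via \cite[(14)]{JLR}, which you only gesture at with ``after the Iwasawa decomposition'').

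However, your Step 3 has a genuine gap. You write that ``independence of $T'$ lets one absorb the $H_{P'}$-shift coming from right translation by changing $T'$.'' This would work if $H_{P'}(\delta g x) - H_{P'}(\delta g)$ were a single constant $S'$, in which case the sum $\sum_{P'}\mc{P}_{P'}^{T'}(\phi_x,\phi'_x,\phi''_x)$ would simply be $\sum_{P'}\mc{P}_{P'}^{T'+S'}(\phi,\phi',\phi'')$ and part (i) would finish it. But the shift is $H_{P'}(K(\delta g)x)$, which depends on the summation variable $\delta$ and on $g$ through the Iwasawa $K'$-component $K(\delta g)$, and is therefore \emph{not} a uniform translation of $T'$. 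The paper's proof of (iii) confronts exactly this: one introduces $F_{P'}(g) = \Lambda_m^{T',P'}(\phi_{x^{-1}}\otimes\phi'_{x^{-1}})(gx)$, expands it \`a la \cite[p.\,193--194]{JLR} into a sum over $Q'\subset P'$ and $\delta\in Q'(F)\bs P'(F)$ with the $g$-dependent correction $\Gamma_{Q'}^{P'}(H_{Q'}(\delta g)-T',\,-H_{Q'}(K(\delta g)x))$, and only then applies $(\ref{ll2})$ (pointwise in $g$, with $H=H_{Q'}(gx)-T'$, $X=H_{Q'}(K(g)x)$) to collapse the double sum; a change of order of summation justified by \cite[Lemma 2.1]{A3} is also needed. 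So your Step 3 identifies the right target but does not supply the mechanism that makes the non-constant shift tractable; as stated it would not go through.

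Your Step 4 and the preamble about $(\phi\otimes\phi')_{P,N}\in\As_{P'}(\textbf{G}')$ and the finiteness/stability of exponent sets are consistent with the paper (the latter is Proposition 2.3 and the argument before \cite[Theorem 9]{JLR} respectively, both of which the paper does invoke for (iii) to check $(\phi_x,\phi'_x,\phi''_x)\in\As(\textbf{G},\textbf{G}')_\chi^*$).
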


\begin{proof} 
The proof of (i) is somewhat similar with \cite[Proposition 3.3]{IY1}. 
By the assumption, $\sum_{P'} \mc{P}_{P'}^{T'}(\phi,\phi',\phi'')$ is well-defined. 
Choose a regular $S'\in \aa_0'$. 
Using the Lemma \ref{mt} and the decomposition of automorphic forms (\ref{dec}), 
we can write $ \mc{P}^{T'+S'}_{P'}(\phi,\phi',\phi'')$ as 
\begin{align*}
&\int_{P'(F) \bs G'(\A_F)^1}^{\#}\sum_{\substack{ Q' \\ Q' \ss P'}} \sum_{\delta \in Q'(F) \bs P'(F)} \Lambda_m^{T',Q'}(\phi \otimes \phi')(\delta g) \phi_{P'}''(g)\Gamma_{Q'}^{P'}(H_{Q'}(\delta g)-T',S')\tau_{P'}(H_{P'}(g)-T'-S') dg 
\\&=\int_{Q'(F) \bs G'(\A_F)^1}^{\#}\sum_{\substack{ Q' \\ Q' \ss P'}} \Lambda_m^{T',Q'}(\phi \otimes \phi')( g) \phi_{P'}''(g)\Gamma_{Q'}^{P'}(H_{Q'}( g)-T',S')\tau_{P'}(H_{P'}(g)-T'-S') dg
\\&=\sum_{\substack{ Q' \\ Q' \ss P'}}   \int_{K'}\int_{(Q'\cap M')(F) \bs M'(\A_F)^1} \int_{\aa_{P'}}^{\#} \left(\Lambda_m^{T',Q'}(\phi \otimes \phi')\cdot \phi_{P'}''\right)(e^Xmk)\Gamma_{Q'}^{P'}(H_{Q'}(m)-T',S')
\\&\times
\tau_{P'}(X'-T'-S') e^{-2\la \rho_{P'},X \ra }dX dm dk.
\end{align*}
Using the Iwasawa decomposition of $M'$ with respect to $Q'\cap M'=U_{Q'}'M'_{Q'}$, write $m=ve^Ylk'$ where 
\[ 
v \in (U_{Q'} \cap M')(\A_F), \ Y \in \aa_{Q'}^{P'}, \ l \in M'_{Q'}(\A_F)^1, \ k' \in (K \cap M')(\A_F).
\]
Since $U'_{Q'}=U'(U'_{Q'} \cap M)$ and $\phi_P''(e^Xmk)=\phi_P''(ue^Xmk)$ for all $u\in U'(\A_F)$,
\[
\int_{[U'_{Q'} \cap M']}\phi_{P'}''(e^Xve^Ylk' k) dv
= \int_{[U']} \int_{[U'_{Q'} \cap M']}\phi_{P'}''(uv'e^{X+Y}lk' k) dv'du
=\phi_{Q'}''(e^{X+Y}lk'k).
\] 
Thus $ \mc{P}^{T'+S'}_{P'}(\phi,\phi',\phi'')$ equals the sum over $Q' \ss P'$ of
\begin{align*}
&\int_{K'}\int_{[M'_{Q'}]}  \int_{\aa_{Q'}^{P'}}^{\#} \int_{\aa_{P'}}^{\#}
\Lambda_m^{T',Q'}(\phi \otimes \phi')( e^{X+Y}lk) \phi_{Q'}''(e^{X+Y}lk) 
\Gamma_{Q'}^{P'}(Y-T',S')\tau_{Q'}(X-T'-S') e^{-2\la \rho_{Q'},X+Y \ra }dX dYdm dk
\\&=
\int_{K'}\int_{[M'_{Q'}]}  \int_{\aa_{Q'}}^{\#}\Lambda_m^{T',Q'}(\phi \otimes \phi')( e^{X}lk) \phi_{Q'}''(e^{X}lk) 
\Gamma_{Q'}^{P'}(X-T',S')\tau_P(X-T'-S') e^{-2\la \rho_{Q'},X\ra }dX dm dk.
\end{align*}
(We incorporated the double $\#$-integral into a single $\#$-integral over $\aa_{Q'}$ using \cite[(14)]{JLR}.)
\par

Thus if we change two sums over $P'$ and $Q'$ in $\sum_{P'} \mc{P}_{P'}^{T'+S'}(\phi,\phi',\phi'')$ and  use (\ref{ll2}), 
\begin{align*}
\sum_{P'} \mc{P}_{P'}^{T'+S'}(\phi,\phi',\phi'')
&=
\sum_{Q'}\int_{K'}\int_{[M'_{Q'}]}  \int_{\aa_{Q'}}^{\#}\Lambda_m^{T',Q'}(\phi \otimes \phi')( e^{X}lk) \phi_{Q'}''(e^{X}lk) \tau_{Q'}(X-T')e^{-2\la \rho_{Q'},X\ra }dX dm dk
\\&=
\sum_{Q'}\mathcal{P}_{Q'}^{T'}(\phi,\phi',\phi'')=\sum_{P'}\mathcal{P}_{P'}^{T'}(\phi,\phi',\phi'').
\end{align*}
This proves (i).
\par

Next we prove (ii). 
Applying the similar argument in the proof of Proposition \ref{poly} to $M'$ instead of $G'$, 
we see that $\mc{P}_{P'}^{T'}(\phi,\phi',\phi'')$ is also a polynomial exponential function in $T'$. 
Among all $ \mc{P}_{P'}^{T'}(\phi,\phi',\phi'')$,
the zero exponent appears only in the term $P' = G'$ by the assumption. 
Since $\sum_{P'}\mc{P}_{P'}^{T'}(\phi,\phi',\phi'')$ is independent of $T'$,  
other exponents are cancelled out except for the zero exponents. 
This proves (ii).
\par

Next we prove (iii). The linearity and the $N(\A_F)$-invariance is obvious from the definition of $\mathcal{P}$. Thus we are sufficient to check $G'(\A_F)^1$-invariance of $\mathcal{P}$. 
For any $x\in \textbf{G}'$ and a function $f$ on $\textbf{G}$ or $N(\A_F) \rtimes \textbf{G}'$ or $\textbf{G}'$, write $ f_x$ for the right translation of $f$ by $x$.

For $(\phi,\phi',\phi'')\in \As(\textbf{G},\textbf{G}')^*$, it is easy to see that 
\[(\phi_x \otimes \phi'_x)_{P,N}=\left((\phi \otimes \phi')_{P,N}\right)_x.
\]
The same argument just before \cite[Theorem 9]{JLR} shows that for arbitrary $\varphi \in \As_{P'}(\textbf{G}')$, \[\mc{E}_{P'}(\varphi)=\mc{E}_{P'}(\varphi_x).\]
Thus $\mathcal{E}_{P'}((\phi_x \otimes \phi'_x)_{P,N})=\mathcal{E}_{P'}\left((\phi \otimes \phi')_{P,N}\right)$ and so $(\phi_x,\phi'_x,\phi''_x)\in \As(\textbf{G},\textbf{G}')_{\chi}^*.$

By (ii), it is enough to show that for any $x \in G'(\A_F)^1$,
\beq \label{sinv} \mc{P}(\phi,\phi',\phi'')=\sum_{P'}\int_{P'(F) \bs G'(\A_F)^1}^{\#} \Lambda_m^{T',P'}(\phi_x \otimes \phi'_x)(g)(\phi_{x})_{P'}''(g)\tau_{P'}(H_{P'}(g)-T') dg.
\eeq
Fix $x \in G'(\A_F)^1$ and set $F_{P'}(g)=\Lambda_m^{T',P'}(\phi_{x^{-1}} \otimes \phi'_{x^{-1}})(gx)$. 
By applying the same argument in  \cite[page 193]{JLR}, we have
\begin{align*} &\int_{P'(F) \bs G'(\A_F)^1}^{\#} \Lambda_m^{T',P'}(\phi_x \otimes \phi'_x)(g)\phi_{x,P'}''(g)\tau_{P'}(H_{P'}(g)-T') dg
\\& \int_{P'(F) \bs G'(\A_F)^1}^{\#} F_{P'}(g)(\phi_{x^{-1}})_{P'}''(gx)\tau_{P'}(H_{P'}(gx)-T') dg=\int_{P'(F) \bs G'(\A_F)^1}^{\#} F_{P'}(g)\phi''_{P'}(g) \tau_{P'}(H_{P'}(gx)-T') dg
\end{align*} because $(\phi_{x^{-1}})_{P'}''(gx)=\phi''_{P'}(g)$. 

For $h \in G'(\A_F),$ let $K(h)\in K'$ be any element such that $hK(h)^{-1}\in P_0'(\A_F).$  Then using the same argument in \cite[page 193-194]{JLR}, 
we have
\[F_{P'}(g)=\underset{Q' \subset P'}{\sum_{Q'}}\sum_{\delta \in Q'(F) \bs P'(F)}\Lambda_m^{T',Q'}(\phi \otimes \phi')(\delta g) \Gamma_{Q'}^{P'}(H_{Q'}(\delta g)-T',-H_{Q'}(K(\delta g)x)).
\]
Applying the similar argument in the proof of (ii), we have
\begin{align*} & \int_{P'(F) \bs G'(\A_F)^1}^{\#}\sum_{\delta \in Q'(F) \bs P'(F)}\Lambda_m^{T',Q'}(\phi \otimes \phi')(\delta g) \phi''_{P'}(g)\Gamma_{Q'}^{P'}(H_{Q'}(\delta g)-T',-H_{Q'}(K(\delta g)x)) \tau_{P'}(H_{P'}(gx)-T') dg
\\& =\int_{Q'(F) \bs G'(\A_F)^1}^{\#} \Lambda_m^{T',Q'}(\phi \otimes \phi')( g)  \phi''_{Q'}(g)\Gamma_{Q'}^{P'}(H_{Q'}( g)-T',-H_{Q'}(K( g)x)) \tau_{P'}(H_{P'}(gx)-T') dg.
\end{align*}
Note that $H_{Q'}(gx)-H_{Q'}(g)=H_{Q'}(K(g)x)$. By putting $H=H_{Q'}(gx)-T', X=H_{Q'}(K(g)x)$ in (\ref{ll2}),
\begin{align*} &\underset{Q' \ss P' }{\sum_{P'}} \int_{Q'(F) \bs G'(\A_F)^1}^{\#} \Lambda_m^{T',Q'}(\phi \otimes \phi')( g)  \phi''_{Q'}(g)\Gamma_{Q'}^{P'}(H_{Q'}( g)-T',-H_{Q'}(K( g)x)) \tau_P(H_{P'}(gx)-T') dg
\\&=\int_{Q'(F) \bs G'(\A_F)^1}^{\#} \Lambda_m^{T',Q'}(\phi \otimes \phi')( g)  \phi''_{Q'}(g)\tau_{Q'}(H_{Q'}(g)-T') dg.
\end{align*}

By changing the order of integration and summation, we have
\begin{align*}
& \sum_{P'}\int_{P'(F) \bs G'(\A_F)^1}^{\#} \Lambda_m^{T',P'}(\phi_x \otimes \phi'_x)(g)(\phi_{x})_{P'}''(g)\tau_{P'}(H_{P'}(g)-T') dg
\\&= \sum_{P'}\int_{P'(F) \bs G'(\A_F)^1}^{\#} F_{P'}(g)\phi''_{P'}(g) \tau_{P'}(H_{P'}(gx)-T') dg
\\&=\sum_{Q'}  \underset{Q' \ss P' }{\sum_{P'}} \int_{Q'(F) \bs G'(\A_F)^1}^{\#} \Lambda_m^{T',Q'}(\phi \otimes \phi')( g)  \phi''_{Q'}(g)\Gamma_{Q'}^{P'}\left(H_{Q'}( g)-T',-H_{Q'}(K( g)x)\right) \tau_{P'}(H_{P'}(gx)-T') dg
\\&=\sum_{Q'} \int_{Q'(F) \bs G'(\A_F)^1}^{\#} \Lambda_m^{T',Q'}(\phi \otimes \phi')( g)  \phi''_{Q'}(g)\tau_{Q'}(H_{Q'}(g)-T') dg
\\&=\mc{P}(\phi,\phi',\phi'')
\end{align*}
and so we proved $(\ref{sinv})$ as required. (Changing the order of integration and summation in the third and fourth equality is justified by \cite[Lemma 2.1]{A3}. We refer the reader to \cite[page 194, 196]{JLR} for the detail.)

The last assertion (iv) essentially follows from the fact that \[\int_{G'(F) \bs G'(\A_F)^1}(\phi \otimes \phi')_{G,N}(g) \phi''(g) F^{G'}(g,T') dg-\int_{G'(F)\bs G'(\A_F)^1} \Lambda_m^{T'}(\phi \otimes \phi')(g) \phi''(g) dg \to 0\]
as $T' \to \infty$ in the positive Weyl chamber. (Here $F^{G'}(g,T')$ is the function defined just before Lemma \ref{kl}.) Since the proof is almost same with that of \cite[Proposition 3.8, Cor. 3.10]{IY}, we omit the detail. 
\end{proof}

\par 
 Let $\As(\textbf{G},\textbf{G}')^{**}$ be the subspace of all triplets 
$(\phi,\phi',\phi'') \in \As(\textbf{G},\textbf{G}')^*$ such that 
\[
\la \lambda_1+\lambda_2+\lambda_3+\rho_P, \omega^{\vee} \ra\ne 0
\quad \text{for all} \ 
\omega^{\vee} \in (\hat{\Delta}^{\vee})^{P'}_{Q'}, \ 
\lambda_1 \in \mc{E}_{Q}(\phi_{Q}),\ \lambda_2 \in \mc{E}_{Q'}(\phi_{Q'}), \ \lambda_3 \in  \mc{E}_{Q'}(\phi''_{Q'})
\] 
for all pairs of parabolic subgroups $Q'\subset P'$ of $G'$. 
Clearly $\As(\textbf{G},\textbf{G}')^{**} \subset \As(\textbf{G},\textbf{G}')^{*}$.
\par

If $(\phi,\phi',\phi'')\in \As(\textbf{G},\textbf{G}')^{**}$, then the regularized integral
\begin{align*}
&\int_{P'(F)\bs G'(\A_F)^1}^* (\phi \otimes \phi')_{P,N}(g)  \phi_{P'}''(g) \hat{\tau}_{P'}(H_{P'}(g)-T')  dg
\\&=\int_{K'} \int_{M'(F) \bs M'(\A_F)^1} \int_{\mf{a}_{P'}}^{\#} (\phi \otimes \phi')_{P,N}(e^Xmk) \phi_{P'}''( e^Xmk) \hat{\tau}_{P'}(X-T') e^{-2\la \rho_{P'},X\ra } dXdmdk
\end{align*}
is well-defined for all proper parabolic subgroups $P'$ of $G'$ by \cite[Lemma 3]{JLR}.

\begin{prop} \label{dec2} 
If $(\phi,\phi',\phi'') \in \As(\textbf{G},\textbf{G}')^{**}$, then
\begin{align*}
&\int_{G'(F) \bs G'(\A_F)^1} \Lambda_m^{T'}(\phi \otimes \phi')(g) \phi''(g)dg 
\\&=\sum_{P'} (-1)^{\dim \mathfrak{a}_{P'}}\int_{P'(F)\bs G'(\A_F)^1}^*(\phi \otimes \phi')_{P,N}(g)\cdot \phi''_{P'}(g)\hat{\tau}_{P'}(H_{P'}(g)-T')dg.
\end{align*}

\end{prop}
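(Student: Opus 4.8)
The plan is to unfold the definition of the mixed truncation $\Lambda_m^{T'}$ inside the integral on the left and to recognize the resulting terms as the regularized constant-term integrals on the right. Concretely: substituting the definition of $\Lambda_m^{T'}$, using $\phi''(\gamma g)=\phi''(g)$ for $\gamma\in G'(F)$ to carry $\phi''$ inside the sum over $\gamma\in P'(F)\bs G'(F)$, and unfolding that sum converts $\int_{G'(F)\bs G'(\A_F)^1}$ into $\int_{P'(F)\bs G'(\A_F)^1}$; since $(\phi\otimes\phi')_{P,N}$ and $g\mapsto\hat\tau_{P'}(H_{P'}(g)-T')$ are left $U'(\A_F)$-invariant, integrating $\phi''$ over $[U']$ replaces it by its constant term $\phi''_{P'}$, which is exactly the right-hand side. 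The left-hand side converges absolutely by Lemma \ref{rd} together with the moderate growth of $\phi''$, and is a polynomial-exponential function of $T'$ by Proposition \ref{poly}; the hypothesis $(\phi,\phi',\phi'')\in\As(\textbf{G},\textbf{G}')_{\chi}^{**}$ is imposed precisely so that, by \cite[Lemma 3]{JLR}, each summand on the right is well-defined as a $\#$-integral. The only real issue is that the individual summands of the unfolded expression are not absolutely convergent, so the interchange of summation and integration must be performed in the regularized sense and not by Fubini.

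To justify this I will follow the mechanism of \cite{JLR,LR}, adapted in \cite{IY}: insert Arthur's partition of unity (equation \ref{pu} at level $G'$) into the convergent left-hand side, interchange the double sum over $(P_1',\delta)$ with the integral -- legitimate by dominated convergence, the partial sums being bounded by $1$ and the integrand rapidly decreasing -- and unfold each $\delta$-sum to reduce to integrals over $P_1'(F)\bs G'(\A_F)^1$ of $\Lambda_m^{T'}(\phi\otimes\phi')(g)\phi''(g)$ against the cut-off $F^{P_1'}(g,T')\tau_{P_1'}(H_{P_1'}(g)-T')$. On the support of this cut-off, reduction theory -- the estimates of \cite{A1} and \cite[pages~92--97]{A2} already used in the proof of Lemma \ref{rd}, applicable here because $\mathcal{S}^{P'}\subset\mathcal{S}^{P}$ by Lemma \ref{kl} -- collapses the $\gamma$-sums in $\Lambda_m^{T'}(\phi\otimes\phi')$, so that this function agrees there with a finite alternating sum of constant terms $(\phi\otimes\phi')_{Q,N}$ multiplied by characteristic functions of cones.

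One then interchanges the sum over $P_1'$ with the sum over the parabolics appearing in the collapsed expression, re-folds each inner sum by equation \ref{pu} applied inside the relevant Levi together with Langlands' combinatorial lemma \ref{ll0} (and the identities \ref{ll1}--\ref{ll2}, which convert between the root cones $\tau$ and the weight cones $\hat\tau$), and finally recognizes each remaining truncated-cone integral as the $\#$-integral $\int^*_{P'(F)\bs G'(\A_F)^1}(\phi\otimes\phi')_{P,N}(g)\phi''_{P'}(g)\hat\tau_{P'}(H_{P'}(g)-T')\,dg$ using \cite[(14)]{JLR} and \cite[Lemma 2.2]{A2}. The extra integration over the compact quotient $[N]$ packaged inside $(\phi\otimes\phi')_{P,N}$ introduces nothing new, exactly as in the constant-term computations of Section 2. (Alternatively, one may deduce the identity from Proposition \ref{inv}: since $\mathcal{P}_{G'}^{T'}(\phi,\phi',\phi'')$ is, because $\aa_{G'}=0$, literally the left-hand side, one expands each $\hat\tau_{P'}(H_{P'}(g)-T')$ on the right via \ref{ll1} and matches the outcome with $\sum_{P'}\mathcal{P}_{P'}^{T'}(\phi,\phi',\phi'')$ using \ref{ll2}.)

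The main obstacle is precisely this final assembly: one cannot separate the sums before the regularization is in place because the individual integrals genuinely diverge, so the cut-off factors $F^{P_1'}\tau_{P_1'}$ -- or, equivalently, the $\#$-regularization and the identities of \cite[Section~2]{JLR} -- must be threaded through every interchange of summation and integration, and the combinatorial bookkeeping relating $\tau_{Q'}^{P'}$, $\hat\tau_{P'}$ and the functions $\Gamma_{Q'}^{P'}$ is delicate. This bookkeeping is identical to the one carried out in \cite{JLR,LR,IY} (e.g. in the vicinity of \cite[Proposition 3.8]{IY}); since the present situation differs from the classical one only by the harmless $[N]$-integration, I will indicate the required adaptations rather than reproduce the calculation.
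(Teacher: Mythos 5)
Your proposal takes a genuinely different route from the paper and, as sketched, has a gap at the crucial step.

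The paper's proof is by \emph{induction on the split rank of} $G'$. It first invokes Proposition \ref{inv}~(ii) to write the left-hand side as $\mc{P}(\phi,\phi',\phi'')-\sum_{P'\subsetneq G'}\mc{P}_{P'}^{T'}(\phi,\phi',\phi'')$; then, crucially, the induction hypothesis applied to the Levi $M_{P'}$ (combined with the mechanism of \cite[Theorem~10]{JLR}, see also \cite[Proposition~3.5]{Y2}) converts each term $\mc{P}_{P'}^{T'}(\phi,\phi',\phi'')$, for $P'\subsetneq G'$, into an explicit alternating sum over $R'\ss P'$ of $\#$-integrals of $(\phi\otimes\phi')_{R,N}\cdot\phi''_{R'}$ against $\hat\tau_{R'}^{P'}\tau_{P'}$; finally, a single application of Langlands' combinatorial lemma collapses the double sum over $R'\ss P'\subsetneq G'$ and yields the right-hand side. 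The induction is not decorative: it is the only place where the link between a $\#$-integral defined via $\Lambda_m^{T',P'}$ and a $\#$-integral of pure constant terms is actually established, and it lives at the Levi level.

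Your main route --- insert Arthur's partition of unity into the convergent left-hand side, interchange, and identify the resulting truncated-cone integrals with the $\#$-integrals on the right --- is closer to the proof of Lemma~\ref{rd} than to the proof of this proposition. The step you describe as ``recognizes each remaining truncated-cone integral as the $\#$-integral $\int^*_{P'(F)\bs G'(\A_F)^1}\dots$'' is exactly where the argument needs to close, and there is no mechanism in your sketch that performs this identification: the $\#$-integral is defined by subtracting boundary exponential terms, and nothing in the partition-of-unity expansion produces those subtractions for free. In the JLR/IY literature this identification is precisely what the induction on rank achieves (the statement at level $G'$ is reduced to the already-known statement on the Levi $M_{P'}$), and simply threading $F^{P_1'}\tau_{P_1'}$ through the interchanges does not replace it. Your alternative remark --- ``deduce the identity from Proposition~\ref{inv}'' by expanding each $\hat\tau_{P'}$ via (\ref{ll1}) and matching against $\sum_{P'}\mc{P}_{P'}^{T'}$ --- is closer in spirit to the paper, but as written it still skips the ingredient that makes the match go: you would still have to know that each $\mc{P}_{P'}^{T'}$ is a $\#$-integral of constant terms for proper $P'$, and that is again the induction hypothesis. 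So the missing idea is the induction on split rank, or an explicit substitute for it; without it, neither of your two routes actually produces the right-hand side.
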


\begin{proof} 
We prove this by induction on the split rank of $G'$. 
Assume that it holds for groups $M_{P'}$, 
where $M_{P'}$ is the Levi subgroup of a proper parabolic subgroup $P'$ of $G'$. 
\par

By Proposition \ref{inv} (ii), we can write 
\[
\int_{G'(F) \bs G'(\A_F)^1} \Lambda_m^{T'}(\phi \otimes \phi')(g) \phi''(g)dg =
\mc{P}(\phi,\phi',\phi'') -\sum_{P' \subsetneq G'} \mc{P}_{P'}^{T'}(\phi,\phi',\phi'').
\]
Using the similar argument in \cite[Theorem 10]{JLR}, 
we can show that $\mc{P}_{P'}^{T'}(\phi,\phi',\phi'')$ is equal to
 \[ 
 \sum_{R' \subset P'} (-1)^{\dim \mathfrak{a}_{R'}^{P'}}\int_{R'(F)\bs G'(\A_F)^1}^* (\phi \otimes \phi')_{R,N}(g)\cdot \phi''_{R'}(g)\hat{\tau}_{R'}^{P'}(H_{R'}(g)-T')\tau_{P'}(H_{P'}(g)-T')dg
 \]
from our induction hypothesis. 
(See also \cite[Proposition 3.5]{Y2}.) 
\par

Thus
\begin{align*}
&\ \sum_{P' \subsetneq G'} \mc{P}_{P'}^{T'}(\phi,\phi',\phi'') \\
&= 
\sum_{R' \subset P' \subsetneq G'} (-1)^{\dim \mathfrak{a}_{R'}^{P'}}\int_{R'(F)\bs G'(\A_F)^1}^* (\phi \otimes \phi')_{R,N}(g)\cdot \phi''_{R'}(g)\hat{\tau}_{R'}^{P'}(H_{R'}(g)-T')\tau_{P'}(H_{P'}(g)-T')dg
\\&=\sum_{R' \subsetneq G'}- (-1)^{\dim \mathfrak{a}_{R'}}\int_{R'(F)\bs G'(\A_F)^1}^*(\phi \otimes \phi')_{R,N}(g)\cdot \phi''_{R'}(g)\hat{\tau}_{R'}(H_{R'}(g)-T')dg
\end{align*}
by Langlands' combinatorial lemma. 
Thus we proved  
\[ 
\mc{P}(\phi,\phi',\phi'') -\sum_{P' \subsetneq G'} \mc{P}_{P'}^{T'}(\phi,\phi',\phi'')
= \sum_{P'} (-1)^{\dim \mathfrak{a}_{P'}}\int_{P'(F)\bs G'(\A_F)^1}^*(\phi \otimes \phi')_{P,N}(g)\cdot  \phi''_{P'}(g)\hat{\tau}_{P'}(H_{P'}(g)-T')dg,
\] 
as desired.
\end{proof}

We conclude this section by providing the definitions of the \emph{regularized Bessel periods} and the \emph{regularized Fourier-Jacobi periods} for all classical and metaplectic groups.

Let $\nu_{\psi^{-1},\mu^{-1}}$ be a global generic Weil representation of $N_{n,r}(\A_F) \rtimes \textbf{G}'$ and $\Theta_{\psi^{-1},\mu^{-1},W_m}(\cdot )$ a theta series functional which gives a realization of $\nu_{\psi^{-1},\mu^{-1}}$ in $\As(N_{n,r} \rtimes \textbf{G}')$. Let $\chi$ be a generic automorphic character of $N_{n,r}(\A_F)$, that is stable under the conjugate action of $G'$ in $G$. We view $\chi$ as an automorphic character of $N_{n,r}(\A_F) \rtimes \textbf{G}'$, which is trivial on $\textbf{G}'$. 

\begin{defn} \label{rdef}Let $\vi_1 \in \As(\textbf{G}), \vi_2 \in \As(\textbf{G}')$ and $f \in \nu_{\psi^{-1},\mu^{-1},W_m}$. 
Suppose that $(\vi_1,\chi,\vi_2)$ and $(\vi_1,\Theta_{\psi^{-1},\mu^{-1},W_m}(f),\vi_2)$ belong to $\As(\textbf{G},\textbf{G}')^*.$

When $n-m=2r+1$, the \textbf{regularized Bessel period} for $(\phi_1,\phi_2)$ is defined by
\[\mathcal{B}_{\chi}(\vi_1,\vi_2) \col \mc{P}(\vi_1,\chi,\vi_2).
\]

When $n-m=2r$, the \textbf{regularized Fourier-Jacobi period} for $(\phi_1,\phi_2,f)$ is defined by 
\[
\mathcal{FJ}_{\psi,\mu}(\vi_1,\vi_2,f) \col \mc{P}(\vi_1,\Theta_{\psi^{-1},\mu^{-1},W_m}(f),\vi_2).
\]

\begin{rem}\label{well} When $N=\mathbf{1}$, Remark \ref{con} and Proposition \ref{inv} (ii) show that $\mathcal{P}$ is exactly the same regularized period functional defined in  \cite{IY} and \cite{Y} in each case. Proposition \ref{inv} (iv) shows that the above  definitions coincide with those of the Bessel period and Fourier-Jacobi period defined on cusp forms. Consequently, we can provide a more explicit formulation of \cite[Conjecture 9.1]{Gan1} using our regularized periods.
\end{rem}

\end{defn}

\section{\textbf{Jacquet modules corresponding to the Fourier-Jacobi characters}}
Starting this section, we prove one implication in the global GGP conjecture for skew hermitian spaces. In this section, we shall prove a lemma which plays a key role in proving Lemma \ref{l1}, which asserts the vanishing of the regularized Fourier-Jacobi periods involving residual representations. 

Throughout the rest of the paper, we assume that $E/F$ is a quadratic extension of number fields. Then for a place $v$ of $F$,
\[ E_v =\begin{cases}F_v \oplus F_v \ \ , \quad \quad \quad \quad \quad \quad \quad \quad  \text{when $v$ splits in $E$}\\
\text{quadratic extension of }F_v, \quad  \text{when $v$ non-splits in $E$}.
\end{cases}
\]

Let $\sigma \in \text{Gal}(E_v/F_v)$ be the non-trivial element. If $E_v=F_v \oplus F_v$, $\sigma$ is defined by \[\sigma(x,y)=(y,x).\]

Since we shall only consider the local situation, we suppress $v$ from the notation until the end of this section.

Let $\psi:F \to \CC^{\times}$ be an additive character and $\mu : E^{\times} \to \CC^{\times}$ a multiplicative character defined similarly as in the global case.
Let $W_{n}$ be a free left $E$-module of rank $n$ which has a skew-hermitian structure $(\cdot ,\cdot )$ and $G_n$ be its unitary group. 
Let $k$ be the dimension of a maximal totally isotropic subspace of $W_n$ 
and we assume $k>0$. 
We fix maximal totally isotropic subspaces $X$ and $X^*$ of $W_n$ which are in duality with respect to $(\cdot , \cdot)$. 
Fix a complete flag in $X$
\[
0=X_0 \subset X_1 \subset \cdots \subset X_k=X,
\]
and choose a basis $\{e_1,e_2,\cdots,e_j\}$ of $X_j$ 
such that $\{e_1,\cdots,e_j\}$ is a basis of $X_j$ for $1\le j \le k$.
Let $\{f_1,f_2,\cdots ,f_k\}$ be a basis of $X^*$ which is dual to the fixed basis $\{e_1,\cdots,e_k\}$ of $X$, 
i.e., $(e_i,f_j)=\delta_{ij}$ for $1\le i,j \le k$, 
where $\delta_{i,j}$ denotes the Kronecker delta. 
We write $X^*_j$ for the subspace of $X^*$ spanned by $\{f_1, f_2,\cdots ,f_j\}$ 
and $W_{n-2j}$ for the orthogonal complement of $X_j + X^*_j$ in $W_n$. 
\par

For each $1\le j \le k$, denote by $P_{n,j}$ the parabolic subgroup of $G_n$ stabilizing $X_j$, 
by $U_{n,j}$ its unipotent radical and by $M_{n,j}$ the Levi subgroup of $P_{n,j}$ stabilizing $X_j^*$. 
Then $M_{n,j} \simeq GL(X_j) \times G_{n-2j}$. 
(Here, we regard $GL(X_j)\simeq GL_j(E)$ 
as the subgroup of $M_{n,j}$ which acts as the identity map on $W_{n-2j}$.) Let $P_0$ be the minimal parabolic subgroup of $G_n$ which stabilizes the above complete flag of subspaces within $X$. We also fix a good maximal compact subgroup $K_n$ of $G_n$ with respect to $P_0$ (see \cite[I. 1.4]{Mo}).
\par

For $0\le j \le k $, write $N_{n,j}$ (resp. $\mathcal{N}_{j}$) 
for the unipotent radical of the parabolic subgroup of $G_n$ (resp., $GL(X_j)$) 
stabilizing the flag $\{0\}=X_0 \subset X_1 \subset \cdots \subset X_j$. 
If we regard $\mathcal{N}_{j}$ as a subgroup of $M_{n,j}\simeq GL(X_j) \times G_{n-2j}$, 
it acts on $U_{n,j}$ and so $N_{n,j}=U_{n,j} \rtimes \mathcal{N}_{j}$. When $j=0$, $\mathcal{N}_0$ denotes the trivial group.
\par

For any $0\le j<k$, $(\mathrm{Res}_{E/F}(W_{n-2j}), \mathrm{Tr}_{E/F}(\cdot ,\cdot ))$ is 
a nondegenerate symplectic $F$-space of $F$-dimension $2(n-2j)$. 
Let $\mathcal{H}_{W_{n-2j}}=\mathrm{Res}_{E/F}(W_{n-2j}) \oplus F$ 
be the Heisenberg group associated to $(\mathrm{Res}_{E/F}(W_{n-2j}), \mathrm{Tr}_{E/F}(\cdot ,\cdot ))$ 
and $\Omega_{\psi^{-1},\mu^{-1}W_{n-2j}}$ be the Weil representation of $ \mathcal{H}_{W_{n-2j}} \rtimes G_{n-2j}$ 
with respect to $\psi^{-1},\mu^{-1}$. 
Then since $N_{n,j-1}\backslash N_{n,j} \simeq \mathcal{H}_{W_{n-2j}}$ and $\mathcal{N}_j \ss N_{n,j-1}$, 
we can pull back  $\Omega_{\psi^{-1},\mu^{-1},W_{n-2j}}$ to $ N_{n,j}\rtimes G_{n-2j}$ 
and  denote it by the same symbol $\Omega_{\psi^{-1},\mu^{-1},W_{n-2j}}$. When $E$ is a field, $\mathrm{Tr}_{E/F}$ and $\mathrm{N}_{E/F}$ denote the usual trace and norm map, respectively. When $E=F \oplus F$, define \[\mathrm{Tr}_{E/F}(x,y)=x+y, \quad \quad \mathrm{N}_{E/F}(x,y)=xy   ,  \quad (x,y) \in E.\]
When $j \ge 2$, we define a character $\lambda_j:\mathcal{N}_j \to \CC^{\times}$ by
\[ 
\lambda_j(n)=\psi( \mathrm{Tr}_{E/F}((ne_2,f_1)+(ne_3,f_2)+ \cdots + (ne_{j},f_{j-1})) ), \quad n \in \mathcal{N}_{j}.
\] When $j=0,1$, $\lambda_j$ denotes the trivial character.
Denote $H_{n,j}=N_{n,j} \rtimes G_{n-2j}$. Using the projection $H_{n,j}$ to $\mathcal{N}_{j}$, we regard $\lambda_j$ as a character of $H_{n,j}$.

Put $\nu_{\psi^{-1},\mu^{-1},W_{n-2j}}=\lambda_j^{-1} \otimes \Omega_{\psi^{-1},\mu^{-1},W_{n-2j}}$. We can embed $H_{n,j}$ into $G_n \times G_{n-2j}$ by inclusion on the first factor and projection on the second factor.
Then $\nu_{\psi^{-1},\mu^{-1},W_{n-2j}}$ is a smooth representation of $H_{n,j}=N_{n,j} \rtimes G_{n-2j}$ 
and up to conjugation of the normalizer of $H_{n,j}$ in $G_n \times G_{n-2j}$, 
it is uniquely determined by $\psi$ modulo $\mathrm{N}_{E/F}(E)^{\times}$ and $\mu$. We shall denote by $\omega_{\psi^{-1},\mu^{-1},W_{n-2j}}$ the restriction of $\nu_{\psi^{-1},\mu^{-1},W_{n-2j}}$ to $G_{n-2j}$.
\par

For $0 \le l \le k-1$, we define a character $\psi_l$ of $N_{n,l+1}$, 
which factors through the quotient $n \colon N_{n,l+1} \to U_{n,l+1} \backslash N_{n,l+1}\simeq \mathcal{N}_{l+1}$, 
by setting
\[
\psi_l(u)=\lambda_{l+1}(n(u)).
\]
\par
Denote by $q$ the cardinality of the residue field of $F$ and write $q_E=q^2$.

For an irreducible smooth representation $\pi'$ of $G_n$, 
 write $J_{\psi_l}(\pi' \otimes \Omega_{\psi^{-1},\mu^{-1},W_{n-2l-2}})$ 
for the Jacquet module of $\pi' \otimes \Omega_{\psi^{-1},\mu^{-1},W_{n-2l-2}}$ 
with respect to the group $N_{n,l+1}$ and its character $\psi_l$. It is a representation of the unitary group $G_{n-2l-2}$. We can also write this as a two-step Jacquet module as follows (see \cite[Chapter 6]{gjr}.) 
\\ For $1\le s \le k$, 
let $C_{s-1} \simeq F$ be the subgroup of $N_{n,s}$ corresponding to the center of the Heisenberg group $\mathcal{H}_{W_{n-2s}}$ through the isomorphism
$ N_{n,s-1}\backslash N_{n,s} \simeq \mathcal{H}_{W_{n-2s}}$. Note that the center of $\mathcal{H}_{W_{n-2s}}$ acts on $\Omega_{\psi^{-1},\mu^{-1},W_{n-2s}}$ by the character $\psi^{-1}$. For $1 \le l \le k-1$, put $N_{n,l+1}^0=N_{n,l}\cdot C_{l}$ and define a character $\psi_{l}^0:N_{n,l+1}^0(F) \to \CC$ as $\psi_{l}^0(nc)=\psi_{l-1}(n)\psi(c)$ for $n \in N_{n,l}, c \in C_{l}$.

Let $J_{\psi_{l}^0}(\pi')$ be the Jacquet module of $\pi'$ with respect to $N_{n,l+1}^0$ and $\psi_{l}^0$. We also denote by $J_{\mc{H}_{W_{n-2(l+1)}}/C_l}$ the Jacquet functor with respect to $W_{n-2(l+1)}/C_l$ and the trivial character.

 Then we have 
\beq \label{two}
J_{\psi_{l}}(\pi' \otimes \Omega_{\psi^{-1},\mu^{-1},W_{n-2(l+1)}})\simeq J_{\mc{H}_{W_{n-2(l+1)}}/C_l}(J_{\psi_{l}^0}(\pi')\otimes \Omega_{\psi^{-1},\mu^{-1},W_{n-2(l+1)}}).
\eeq
We use the symbol $\text{Ind}$ to denote the normalized induction functor, $\text{ind}$ for the unnormalized induction functor and $\text{c-ind}$ for compactly supported induction.

\begin{lem} \label{Jac} 
Let $E$ be a field and $n,m,a$ positive integers such that $2r=n-m\ge 0$. Let $\mathcal{E}$ be a smooth representation of $G_{m+2a}$ of finite length and $\sigma,\pi$ and $\Sigma$ be irreducible smooth representations of $GL(X_a),G_n$ and $G_{n+2a}$, respectively. 
Then 

\begin{enumerate}
\item  If $r>0,$ \begin{align*}& \Hom_{N_{n+2a,r} \rtimes G_{m+2a}} 
\left( \Sigma \otimes  \nu_{\psi^{-1},\mu^{-1}, W_{m+2a}} 
\otimes  \mathcal{E},
\CC \right)
\\& \simeq \Hom_{G_{n+2a}} \left( 
J_{\psi_{r-1}}(\Sigma  \otimes \Omega_{\psi^{-1},\nu^{-1},W_{m+2a}})
 , \mathcal{E}^{\vee} \right) \end{align*}
 
 \item If $r=0$,
 \begin{align*}
& \dim_{\mathbb{C}}\Hom_{G_{m+2a}} 
\left(\Ind_{P_{m+2a,a}}^{G_{m+2a}}(\sigma |\cdot|_E^s \boxtimes \pi) \otimes  \nu_{\psi^{-1},\mu^{-1}, W_{m+2a}} 
\otimes \mathcal{E}   ,
\CC \right)
\\& \le \dim_{\mathbb{C}}\Hom_{G_m}
(J_{\psi_{a-1}}(  \mathcal{E} \otimes \Omega_{\psi^{-1},\mu^{-1}, W_m}),\pi^{\vee})
\end{align*}
\end{enumerate}
except for finitely many $q_E^{-s}$. 
\end{lem}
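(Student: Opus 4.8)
The statement must be proved separately for $r>0$ and for $r=0$, as the paper's introduction anticipates; both cases rest on realizing the Fourier--Jacobi descent as an iterated twisted Jacquet functor and on the Stone--von Neumann theorem, with Kudla's filtration of the Jacquet module of a Weil representation entering only when $r=0$.

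\textbf{Case $r>0$.} Here the corank-$2r$ descent from $G_{n+2a}$ to $G_{m+2a}$ is effected directly by the unipotent $N_{n+2a,r}$, so a pure adjunction argument suffices and the isomorphism should in fact hold for every $s$, the genericity restriction being needed only for (ii). Since $G_{m+2a}$ normalizes $N_{n+2a,r}$, for any smooth $N_{n+2a,r}\rtimes G_{m+2a}$-module $V$ one has $\Hom_{N_{n+2a,r}\rtimes G_{m+2a}}(V,\CC)=\Hom_{G_{m+2a}}(V_{N_{n+2a,r}},\CC)$; applying this with $V=\mathcal{E}\otimes\nu_{\psi^{-1},\mu^{-1},W_{m+2a}}\otimes\Ind_{P_{m+2a,a}}^{G_{m+2a}}(\sigma|\cdot|_E^s\boxtimes\pi)$ and using that $N_{n+2a,r}$ acts trivially on the induced factor, everything reduces to computing the $N_{n+2a,r}$-coinvariants of $\mathcal{E}\otimes\nu_{\psi^{-1},\mu^{-1},W_{m+2a}}$. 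I would do this in two stages mirroring the decomposition $\nu_{\psi^{-1},\mu^{-1},W_{m+2a}}=\lambda_r\otimes\Omega_{\psi^{-1},\mu^{-1},W_{m+2a}}$: first the $\lambda_r$-twisted coinvariants along the normal subgroup $N_{n+2a,r-1}$, on which $\Omega$ is trivial; then the coinvariants along the Heisenberg quotient $N_{n+2a,r-1}\backslash N_{n+2a,r}\simeq\mathcal{H}_{W_{m+2a}}$, which by the Stone--von Neumann theorem extract the multiplicity space of $\Omega$. Comparing with the definition of the twisted Jacquet functor, this composite is exactly $J_{\psi_{r-1}^{-1}}(\mathcal{E}\otimes\Omega_{\psi^{-1},\nu^{-1},W_{m+2a}})$ --- the precise choice of the character $\nu$ and the inversion of $\psi_{r-1}$ being forced by the self-duality of the Weil representation --- and re-tensoring with the induced representation yields (i).

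\textbf{Case $r=0$.} Now $n=m$, the representation $\nu_{\psi^{-1},\mu^{-1},W_{m+2a}}$ restricts on $G_{m+2a}$ to the Weil representation $\omega_{\psi^{-1},\mu^{-1},W_{m+2a}}$, and the whole descent is concealed in the induced factor. Since a parabolic subgroup is cocompact, Frobenius reciprocity gives
\begin{align*}
&\Hom_{G_{m+2a}}\!\big(\mathcal{E}\otimes\omega_{\psi^{-1},\mu^{-1},W_{m+2a}}\otimes\Ind_{P_{m+2a,a}}^{G_{m+2a}}(\sigma|\cdot|_E^s\boxtimes\pi),\,\CC\big)\\
&\qquad\cong\;\Hom_{GL_a(E)\times G_m}\!\big(r_{P_{m+2a,a}}(\mathcal{E}\otimes\omega_{\psi^{-1},\mu^{-1},W_{m+2a}}),\;\sigma^\vee|\cdot|_E^{-s}\boxtimes\pi^\vee\big),
\end{align*}
where $r_{P_{m+2a,a}}$ denotes the normalized Jacquet functor (and all inductions and Jacquet functors are normalized). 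Now I would invoke Kudla's filtration of the Jacquet module of the Weil representation: $r_{P_{m+2a,a}}(\omega_{\psi^{-1},\mu^{-1},W_{m+2a}})$ carries, as a module over the Levi $GL_a(E)\times G_m$, a finite filtration whose graded pieces are assembled from induced representations of $GL_a(E)$ with fixed $|\det|_E$-exponents and Weil representations of smaller unitary groups, together with one distinguished ``primary'' piece built from $\mathcal{S}(GL_a(E))$ and $\omega_{\psi^{-1},\mu^{-1},W_m}$. Tensoring with $\mathcal{E}$ and taking coinvariants along $U_{m+2a,a}$ (an exact, filtration-compatible operation) yields a corresponding finite filtration of $r_{P_{m+2a,a}}(\mathcal{E}\otimes\omega_{\psi^{-1},\mu^{-1},W_{m+2a}})$. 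The genericity in $s$ enters here: once the $|\cdot|_E^s$-twist is taken into account, a non-primary graded piece can map onto $\sigma^\vee|\cdot|_E^{-s}\boxtimes\pi^\vee$ only for $q_E^{-s}$ in a finite set depending on $\mathcal{E},\sigma,\pi$ but not on $s$ --- because the relevant central and partial determinant characters of such a piece range over a finite, $s$-independent set while those of $\sigma^\vee|\cdot|_E^{-s}$ move with $s$ --- so that for all but finitely many $q_E^{-s}$ only the primary piece can contribute. By left-exactness of $\Hom$, the dimension of the left-hand side is then at most the dimension of the $\Hom$ against the primary piece, and pairing the latter with $\sigma^\vee|\cdot|_E^{-s}\boxtimes\pi^\vee$ --- using the Weil-representation structure of the $\mathcal{S}(GL_a(E))$-factor and the Heisenberg coinvariants exactly as in the first case --- is bounded by $\dim_{\mathbb{C}}\Hom_{G_m}(J_{\psi_{a-1}^{-1}}(\mathcal{E}\otimes\Omega_{\psi^{-1},\mu^{-1},W_m}),\pi^\vee)$. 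This is (ii).

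\textbf{Where the difficulty lies.} I expect the technical core to be case $r=0$: making Kudla's filtration explicit in the skew-Hermitian unitary setting --- with the correct $|\det|_E$-exponents and $\mu$-twists --- and verifying the genericity claim, namely bounding, for each non-primary graded piece, the finite set of $q_E^{-s}$ at which it can pair with $\sigma^\vee|\cdot|_E^{-s}$, so that the union over all pieces stays finite. A recurring nuisance is the non-admissibility of the Weil representation, which demands some care in applying Frobenius reciprocity and the contragredient functor; this is absorbed by keeping $\mathcal{E}$ of finite length and noting that the Jacquet modules actually entering the Hom computations are of finite length. Finally, it is precisely the passage from the primary graded piece to $J_{\psi_{a-1}^{-1}}$ --- resting only on left-exactness of $\Hom$ rather than on the splitting of the filtration --- that forces an inequality rather than an equality in (ii).
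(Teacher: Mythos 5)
Your treatment of the case $r>0$ matches the paper: the paper simply unwinds the semidirect product by Frobenius reciprocity, splitting $\nu_{\psi^{-1},\mu^{-1},W_{m+2a}}=\lambda_r\otimes\Omega_{\psi^{-1},\mu^{-1},W_{m+2a}}$ and recognizing the $N_{n+2a,r}$-coinvariants twisted by $\lambda_r=\psi_{r-1}|_{N_{n+2a,r}}$ as the twisted Jacquet functor $J_{\psi_{r-1}^{-1}}$, exactly as you describe; the Stone--von Neumann gloss is a harmless restatement, and you correctly observe that no genericity in $s$ is needed here.

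For $r=0$ the overall template (Frobenius reciprocity, a filtration of the Weil representation, genericity to kill all but one piece) is the right one, but your sketch diverges from the paper's argument in two ways and elides the ingredient that actually produces the Jacquet module $J_{\psi_{a-1}^{-1}}$. First, the paper does \emph{not} pass to the Levi and invoke the full Kudla filtration of $r_{P_{m+2a,a}}(\omega_{W_{m+2a}})$; it stays at the level of $\Hom_{P_{m+2a,a}}$ and uses only the coarse two-term exact sequence
\[
0 \to \text{c-ind}_{Q_{a,1}}^{P_{m+2a,a}}\bigl(\mu^{-1}|\cdot|_E^{1/2} \boxtimes \Omega_{W_m}\bigr) \to \omega_{W_{m+2a}}\big|_{P_{m+2a,a}} \to \mu^{-1}|\cdot|_E^{1/2}\boxtimes \omega_{W_m} \to 0
\]
from \cite[Theorem 16.1]{Gan2} (evaluation at $0$ versus sections supported off $0$ in the mixed model). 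The quotient term is killed for generic $s$ by the central-character mismatch you describe, and the remaining $\Hom$ is then transferred via Frobenius to the subgroup $Q_{a,1}=(\mathcal{P}_{a,1}\times G_m)\ltimes U_{m+2a,a}$, where $\mathcal{P}_{a,1}$ is the mirabolic of $GL_a$. Second --- and this is the step your proposal does not mention --- the paper at this point applies the Bernstein--Zelevinsky filtration of $\sigma|_{\mathcal{P}_{a,1}}$ with successive quotients $(\Phi^+)^{j-1}\Psi^+\sigma^{(j)}$; the genericity argument is then run on the $GL(X_{a-j})$-central character for $1\le j\le a-1$, leaving only $j=a$, whose $\Hom$ space is computed to be $\bigl(\Hom_{G_m}(J_{\psi_{a-1}^{-1}}(\mathcal{E}\otimes\Omega_{W_m}),\pi^\vee)\bigr)^{\oplus t}$ with $t=\dim\sigma^{(a)}\le 1$. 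It is this BZ step, not a ``primary piece of Kudla's filtration,'' that produces the character $\psi_{a-1}^{-1}$ and the group $N_{m,a}$ defining the Jacquet module on the right-hand side. Your plan could likely be made to work, since expanding the $\text{c-ind}$ piece and the BZ filtration together would reproduce something like the graded pieces you gesture at, but as written the passage from the ``primary piece built from $\mathcal{S}(GL_a(E))$ and $\omega_{W_m}$'' to the concrete bound by $\dim\Hom_{G_m}(J_{\psi_{a-1}^{-1}}(\mathcal{E}\otimes\Omega_{W_m}),\pi^\vee)$ is left entirely unexplained, and that is precisely the content of the lemma. You should also be careful that $r_P(\mathcal{E}\otimes\omega)$ is not $r_P(\mathcal{E})\otimes r_P(\omega)$; the paper sidesteps this by filtering $\omega|_P$ before ever taking any coinvariants.
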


\begin{proof} \noindent 
We first prove (i). By the Frobenius reciprocity,
 \begin{align*}
 & \Hom_{N_{n+2a,r} \rtimes G_{m+2a}} 
\left( \Sigma \otimes  \nu_{\psi^{-1},\mu^{-1}, W_{m+2a}} 
\otimes  \mathcal{E} ,
\CC \right)
\\&=\Hom_{N_{n+2a,r} \rtimes G_{m+2a}} 
\left(\Sigma \otimes  \lambda_r^{-1} \otimes \Omega_{\psi^{-1},\mu^{-1}, W_{m+2a}} 
\otimes  \mathcal{E} ,
\CC \right)
\\& \simeq \Hom_{G_{m+2a}} \left( 
J_{\psi_{r-1}}( \Sigma\otimes \Omega_{\psi^{-1},\nu^{-1},W_{m+2a}})
 \otimes \mathcal{E},\CC \right)
 \\& \simeq \Hom_{G_{m+2a}} \left( 
J_{\psi_{r-1}}( \Sigma \otimes \Omega_{\psi^{-1},\nu^{-1},W_{m+2a}})
 , \mathcal{E}^{\vee} \right).
\end{align*}
Next, we prove (ii). The proof is quite similar with \cite[Lemma 4.1]{Y}.  
By the Frobenius reciprocity,
\[
\Hom_{G_{m+2a}} \left( 
 \Ind_{P_{m+2a,a}}^{G_{m+2a}}(\sigma |\cdot|_E^s \boxtimes \pi) \otimes \nu_{\psi^{-1},\mu^{-1}, W_{m+2a}}
\otimes \mathcal{E}  ,
\CC \right) 
\]
is isomorphic to
\beq \label{hom} 
\Hom_{P_{m+2a,a}}\big( (\sigma |\cdot|_E^{s-s_0} \boxtimes \pi) 
\otimes  \omega_{\psi^{-1},\mu^{-1}, W_{m+2a}}
\otimes \mathcal{E} ,
\CC \big)
\eeq
where $s_0$ is defined so that $\delta_{P_{m+2a,a}}^{\frac{1}{2}} = |\cdot|_E^{s_0}$.
For $1\le i \le a$, write $\mathcal{P}_{a,i}$ for the subgroup of $GL(X_a)$ 
which stabilizes the flag $X_{a-i} \subset X_{a-i+1}\subset \cdots \subset X_{a-1}$ 
and fixes $e_j$ modulo $X_{j-1}$ for $a-i+1 \le j \le a$. 
Put $Q_{a,i}=(\mathcal{P}_{a,i} \times G_{m}) \ltimes U_{m+2a,a}$. 
Using a similar argument to the proof of \cite[Theorem 16.1]{Gan2}, we have an exact sequence
\begin{equation}\label{ex}
\begin{CD}
  0 \to   \text{c-ind}_{Q_{a,1}}^{P_{m+2a,a}} \mu^{-1}|\cdot|_E^{\frac{1}{2}} \boxtimes \Omega_{\psi^{-1},\mu^{-1}, W_{m}} \to \omega_{\psi^{-1},\mu^{-1}, W_{m+2a}}|_{P_{m+2a,a}}  \to \mu^{-1}|\cdot|_E^{\frac{1}{2}} \boxtimes \omega_{\psi^{-1},\mu^{-1}, W_{m}} \to 0,
\end{CD}
\end{equation}
where $\text{c-ind}$ is the unnormalized compact induction functor. 
Since $\mathcal{E}$ is admissible, we have 
\begin{align}
\notag
&\Hom_{P_{m+2a,a}}\left(
(\sigma |\cdot|_E^{s-s_0} \boxtimes \pi) \otimes ( \mu^{-1}|\cdot|_E^{\frac{1}{2}} \boxtimes \omega_{\psi^{-1},\mu^{-1}, W_{m}} )
\otimes\mathcal{E}, \CC \right)
\\\label{h1} 
&\simeq \Hom_{M_{m+2a,a}}\left(
\left(\sigma \mu^{-1}|\cdot|_E^{\frac{1}{2}+s} \boxtimes( \pi \otimes \omega_{\psi^{-1},\mu^{-1}, W_{m}} )\right) \otimes J_{P_{m+2a,a}}(\mathcal{E})  ,
\CC \right),
\end{align}
where $J_{P_{m+2a,a}}$ is the Jacquet functor with respect to $U_{m+2a,a}$ and its trivial character. 
Since the central character of any irreducible subquotient of 
$J_{P_{m+2a,a}}(\mathcal{E})|_{GL(X_a)} \otimes \sigma \mu^{-1}|\cdot|_E^{\frac{1}{2}+s} $ 
is not trivial for almost all $q_E^{-s}$,  
the $\Hom$ space in (\ref{h1}) is zero for almost all $q_E^{-s}$. 
By tensoring the exact sequence (\ref{ex}) with 
$\mathcal{E} \otimes (\sigma |\cdot|_E^{s-s_0} \boxtimes \pi)$ 
and applying the $\Hom_{P_{m+2a,a}}(-,\CC)$ functor, 
we can regard (\ref{hom}) as a subspace of
\begin{align*}
&\Hom_{P_{m+2a,a}}\left(  (\sigma |\cdot|_E^{s-s_0} \boxtimes \pi)
\otimes (\text{c-ind}_{Q_{a,1}}^{P_{m+2a,a}} \mu^{-1}|\cdot|_E^{\frac{1}{2}} 
\boxtimes \Omega_{\psi^{-1},\mu^{-1}, W_{m}})\otimes \mathcal{E} ,
\CC \right)
\\&\simeq 
\Hom_{Q_{a,1}}\left(
 (\sigma |\cdot|_E^{s-s_0} \boxtimes \pi) \otimes (\delta_{P_{m+2a,a}} \delta_{Q_{a,1}}^{-1}\mu^{-1}|\cdot|_E^{\frac{1}{2}} 
\boxtimes \Omega_{\psi^{-1},\mu^{-1}, W_{m}}) \otimes \mathcal{E} ,
\CC \right)
\\&\simeq 
\Hom_{Q_{a,1}}\left( 
(\sigma \mu^{-1}|\cdot|_E^{\frac{1}{2}+s-s_1} 
\boxtimes (\pi \otimes \Omega_{\psi^{-1},\mu^{-1}, W_{m}}) ) \otimes \mathcal{E} ,
\CC \right)
\end{align*}
for almost all $s$,
where $s_1$ is defined so that $\delta_{P_{m+2a,a}} \delta_{Q_{a,1}}^{-1}|\cdot|_E^{-s_0} = |\cdot|_E^{-s_1}$. 
(Here we have used the Frobenius reciprocity law again.)
\par

Note that $\mathcal{P}_{a,1}$ is a mirabolic subgroup of $GL(X_a)$. 
By the Bernstein and Zelevinsky's result (See \cite[Sec. 3.5]{BZ}), 
the restriction of $\sigma$ to $\mathcal{P}_{a,1}$ has a filtration
\[
\{0\}=\sigma_{a+1}  \subset \sigma_a \subset \dots \subset \sigma_2 \subset \sigma_{1}=\sigma
\]
such that $\sigma_j / \sigma_{j+1} \simeq (\Phi^{+})^{j-1}\Psi^+\sigma^{(j)}$ for $1\le j \le a$. 
(For the definition of $\Phi^{+}$ and $\Psi^{+}$, see \cite[Sec. 3]{BZ}.) 
Let $X_{a,i}$ be the subspace of $X_a$ generated by $e_{a-i+1},e_{a-i+2},\dots,e_a$ 
and put $\mathcal{N}_{a,i}$ by $\mathcal{N}_a \cap GL(X_{a,i})$. 
Then for $1\le j \le a$, 
\[
(\Phi^{+})^{j-1}\Psi^+\sigma^{(j)} 
\simeq \text{c-ind}_{\mathcal{P}_{a,j}}^{\mathcal{P}_{a,1}}
|\cdot|_E^{\frac{j}{2}}\sigma^{(j)}\boxtimes \lambda_{a}|_{\mathcal{N}_{a,j}}.
\]
By applying the Frobenius reciprocity law, we have
\begin{align}
\notag
&\Hom_{Q_{a,1}}\left(
  \left( ( (\Phi^{+})^{j-1}\Psi^+\sigma^{(j)} \mu^{-1}|\cdot|_E^{\frac{1}{2}+s-s_1}) 
\boxtimes (\pi\otimes \Omega_{\psi^{-1},\mu^{-1}, W_{m}}) \right) \otimes \mathcal{E},
\CC \right)
\\\label{h4}
&\simeq 
\Hom_{GL(X_{a-j}) \times (\mathcal{N}_{a,j} \times G_m) \rtimes U_{m+2j,j}}\left(
(|\cdot|_E^{\frac{j+1}{2}+s-s_1'}\mu^{-1} \sigma^{(j)} 
\boxtimes \lambda_{a}|_{\mathcal{N}_{a,j}} \boxtimes (\pi
\otimes \Omega_{\psi^{-1},\mu^{-1}, W_{m}})) \otimes J_{P_{m+2a,a-j}}(\mathcal{E}) ,
\CC \right)
\end{align}
for some $s_1' \in \RR$ (depending on $j$).
For $1\le j \le a-1$, the central character of any irreducible subquotient of 
$(|\cdot|_E^{\frac{j+1}{2}+s-s_1'}\mu^{-1} \sigma^{(j)}) \otimes J_{P_{m+2a,a-j}}(\mathcal{E})|_{GL(X_{a-j})}$ 
is not trivial for almost all $q_E^{-s}$. 
Thus the Hom space in (\ref{h4}) is zero for almost all $q_E^{-s}$ unless $j=a$, 
in which case it is isomorphic to
\begin{align*}
&\Hom_{Q_{a,a}}( ( \psi_{a-1}\boxtimes \pi ) \otimes
 \Omega_{\psi^{-1},\mu^{-1}, W_{m}} 
\otimes \mathcal{E}, \CC 
)
\\&\simeq 
\left(\Hom_{G_m}(
J_{\psi_{a-1}^{-1}}(\mathcal{E}\otimes \Omega_{\psi^{-1},\mu^{-1}, W_{m}}),\pi^{\vee}
)\right)^{\oplus t},
\end{align*}
where $t=\dim_{\CC}\sigma^{(a)}.$ 
Since $t \leq 1$, our claim is proved.

\end{proof}

\begin{rem} Lemma \ref{Jac} can be used to control the size of the Hom space
\[ \Hom_{N_{n+2a,r} \rtimes G_{m+2a}} 
\left( \Ind_{P_{n+2a,a}}^{G_{n+2a}}(\sigma |\cdot|_E^s \boxtimes \pi) \otimes  \nu_{\psi^{-1},\mu^{-1}, W_{m+2a}} 
\otimes  \mathcal{E},
\CC \right)
\]
in terms of the relevant Jacquet modules. When $r>0$, the existence of $N_{n+2a,r}$ in this Hom space becomes essential since its absence would lead to an infinite-dimensional Hom space. Consequently, we need to handle the cases of $r>0$ and $r=0$ separately.

\end{rem}


\section{\textbf{Eisenstein series and residual representation}}
In this section, we introduce the concept of residual Eisenstein series representations. For an irreducible cuspidal automorphic representation $\pi$ of $G_{n}(\A_F)$ and $\sigma$ of $GL_a(\A_E)$, 
 write $L(s,\sigma\times \pi)$ for the Rankin-Selberg $L$-function $L(s,\sigma\times BC(\pi))$. 
We also write $L(s,\sigma,As^{+})$ for the Asai $L$-function of $\sigma$ 
and $L(s,\sigma,As^{-})$ for the $\mu$-twisted Asai $L$-function $L(s,\sigma \otimes \mu, As^{+})$ 
(cf., \cite[Section 7]{Gan2}).
\par

For $g\in G_{n+2a}(\A_F)$, 
write $g=m_gu_gk$ for $m_g\in M_{n+2a,a}(\A_F), \ u_g\in U_{n+2a,a}(\A_F), \ k_g\in K$. 
Since $M_{n+2a,a}\simeq GL(X_a) \times G_n$, 
we decompose $m_g=m_1 m_2$, 
where $m_1 \in GL(X_a)(\A_E) , m_2 \in G_n(\A_F).$ 
Define $d(g)=|\det{m_1}|_{\A_E}$. 
Then for $\phi \in \As_{P_{n+2a,a}}^{\sigma \boxtimes \pi}(G_{n+2a})$, the associated Eisenstein series with $\phi$ is defined by
\[
E(g,\phi,z)=\sum_{P_{n+2a,a}(F) \bs G_{n+2a}(F)}\phi(\gamma g)d(\gamma g)^z.
\] 
From the theory of Eisenstein series, it converges absolutely when $\Re(z)$ is sufficiently large and admits a meromorphic continuation to the whole complex plane. \cite[IV.1.8.]{Mo}. 

\begin{rem} \label{I_z}For $\phi \in \As_{P_{n+2a,a}}^{\sigma \boxtimes \pi}(G_{n+2a})$, we can uniquely extend it to a flat section $\phi_z \in \As_{P_{n+2a,a}}^{\sigma|\cdot|^z \boxtimes \pi}(G_{n+2a})$ which satisfying $\phi_z(k)=\phi(k)$ for $k \in K$. Then $E(g,\phi,z)=E(g,\phi_z,0)$ and the map $\phi_z \to E(g,\phi_z,0)$ gives an automorphic realization of the induced representation $\Ind_{P_{n+2a,a}(\A_F)}^{G_{n+2a}(\A_F)}(\sigma  |\cdot|^z_{E} \ \boxtimes \pi)$.\end{rem}
One of the properties of cuspidal representations with generic $A$-parameters (i.e., Remark \ref{ec} (iii)) plays an essential role in the proof of the  following proposition.

\begin{prop}(\cite[Proposition~5.2]{JZ}, \cite[Proposition 5.3]{IY}) \label{p1} 
Let $\pi$ be an irreducible cuspidal automorphic representation of $G_{n}(\A_F)$ with generic $A$-parameter
and $\sigma$ an irreducible unitary cuspidal automorphic representation of $GL_a(\A_E)$. 
For $\phi \in \As_{P_{n+2a,a}}^{\sigma \boxtimes \pi}(G_{n+2a})$, 
the Eisenstein series $E(\phi,z)$ has at most a simple pole at $z=\frac{1}{2}$ and $z=1$. 
Moreover, it has a pole at $z=\frac{1}{2}$ as $\phi$ varies 
if and only if 
$L(s,\sigma\times \pi^{\vee})$ is non-zero at $s=\frac{1}{2}$ 
and $L(s,\sigma,As^{(-1)^{n-1}})$ has a pole at $s=1$. 
Furthermore, it has a pole at $z=1$ as $\phi$ varies 
if and only if $L(s,\sigma\times \pi^{\vee})$ has a pole at $s=1$. 
\end{prop}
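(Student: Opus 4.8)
The plan is to follow \cite[Proposition 5.3]{IY} and \cite[Proposition 5.2]{JZ}, whose only ingredient specific to temperedness is the holomorphy statement recorded in Remark~\ref{ec}(iii). First I would pass from $E(\phi,z)$ to its constant term along $P:=P_{n+2a,a}$: by the general theory of Eisenstein series (\cite[IV]{Mo}), in the region $\Re(z)>0$ the series $E(\phi,z)$ has a pole at a point $z_0$ if and only if its constant term $(E(\phi,z))_{P}$ does. Since $P$ is a maximal parabolic subgroup, the associated relative Weyl group has order two, generated by a single element $w$, and $(E(\phi,z))_{P}=\phi\, d(\cdot)^{z}+M(w,z)(\phi\, d(\cdot)^{z})$, where $M(w,z)$ is the global intertwining operator attached to $\sigma\boxtimes\pi$. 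The first term is entire in $z$, so all poles of the constant term, hence of $E(\phi,z)$ in $\Re(z)>0$, are poles of $M(w,z)$.

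Next I would factor $M(w,z)=r(z)\,N(w,z)$, where $N(w,z)=\bigotimes_v N_v(w,z)$ is the normalized intertwining operator and $r(z)$ collects the local normalizing factors. Running the Gindikin--Karpelevich / Langlands--Shahidi computation for the inclusion $GL(X_a)\times G_n\subset G_{n+2a}$ (compare \cite[Section 7]{Gan2}), the unipotent radical $U_{n+2a,a}$ contributes two pieces under the Levi action --- a ``linear'' piece of type $\sigma\boxtimes\pi$ and a ``quadratic'' Asai-type piece --- so that
\[
r(z)=\frac{L(z,\sigma\times\pi^{\vee})\,L(2z,\sigma,As^{(-1)^{n-1}})}{L(z+1,\sigma\times\pi^{\vee})\,L(2z+1,\sigma,As^{(-1)^{n-1}})}
\]
up to entire non-vanishing factors, the sign $(-1)^{n-1}$ being dictated by the parity of the skew-hermitian form restricted to the Asai piece. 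By Remark~\ref{ec}(iii) each $N_v(w,z)$, hence $N(w,z)$, is holomorphic on $\Re(z)\ge\frac{1}{2}$; moreover $N_v(w,z)$ acts as the identity on the normalized spherical vector for all but finitely many $v$, so $N(w,z)$ is not identically zero. Therefore, on $\Re(z)\ge\frac{1}{2}$ the poles of $M(w,z)$ coincide with those of $r(z)$, and at such a point $z_0$ one has $\Res_{z=z_0}M(w,z)=(\Res_{z=z_0}r)\cdot N(w,z_0)\ne0$; consequently $E(\phi,z)$ acquires a pole at $z_0$ for some $\phi$ exactly when $r$ has a pole there, which handles the ``as $\phi$ varies'' formulation.

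It then remains to locate the poles of $r(z)$ on $\Re(z)\ge\frac{1}{2}$. There the denominator $L(z+1,\sigma\times\pi^{\vee})\,L(2z+1,\sigma,As^{(-1)^{n-1}})$ lies in the range of absolute convergence, hence is holomorphic and non-vanishing; in the numerator, $L(s,\sigma\times\pi^{\vee})$ --- a product of Rankin--Selberg $L$-functions of unitary cuspidal representations by the structure of $BC(\pi)$ in Remark~\ref{ec}(ii) --- is holomorphic for $\Re(s)>0$ apart from an at most simple pole at $s=1$, and $L(s,\sigma,As^{(-1)^{n-1}})$ is holomorphic for $\Re(s)\ge1$ apart from an at most simple pole at $s=1$. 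Thus $r(z)$, and hence $E(\phi,z)$, is holomorphic on $\Re(z)\ge\frac{1}{2}$ except for at most simple poles at $z=1$ (where only $L(z,\sigma\times\pi^{\vee})$ can contribute, $L(2z,\sigma,As^{(-1)^{n-1}})$ being holomorphic and nonzero there) and at $z=\frac{1}{2}$ (where only $L(2z,\sigma,As^{(-1)^{n-1}})$ can contribute, the numerator also carrying the value $L(\frac{1}{2},\sigma\times\pi^{\vee})$). Reading this off gives the two equivalences: $E(\phi,z)$ has a pole at $z=1$ for some $\phi$ iff $L(s,\sigma\times\pi^{\vee})$ has a pole at $s=1$, and it has a pole at $z=\frac{1}{2}$ for some $\phi$ iff $L(s,\sigma,As^{(-1)^{n-1}})$ has a pole at $s=1$ \emph{and} $L(\frac{1}{2},\sigma\times\pi^{\vee})\ne0$.

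I expect the main obstacle to be twofold. First, pinning down $r(z)$ exactly --- the half-integral shifts in the arguments and, above all, which Asai $L$-function ($As^{+}$ or $As^{-}$, i.e.\ the sign $(-1)^{n-1}$) occurs --- requires carefully carrying out the Gindikin--Karpelevich computation for this particular $GL\times U\subset U$ configuration and keeping track of the normalization built into $d(\cdot)^{z}$. Second, and more fundamentally, the whole reduction rests on the holomorphy of the normalized local intertwining operators on $\Re(z)\ge\frac{1}{2}$, namely Remark~\ref{ec}(iii): this is a deep consequence of the endoscopic classification \cite{Mok,KMSW} together with the local Langlands correspondence, and it is precisely here that temperedness of $\pi$ enters (as flagged just before the statement). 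A secondary point --- already addressed above through the non-triviality of $N(w,z_0)$ --- is verifying that the residue of $E(\phi,z)$ is a genuinely nonzero linear functional in $\phi$, so that ``has a pole as $\phi$ varies'' really matches the vanishing or non-vanishing of the relevant $L$-quantity. Since the remaining bookkeeping is carried out in detail in \cite{IY} and \cite{JZ}, the proof would largely consist of transcribing their argument and checking that the holomorphy input they use is supplied in the present setting by Remark~\ref{ec}(iii).
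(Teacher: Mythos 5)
Your argument reproduces the standard Langlands--Shahidi analysis from \cite[Proposition 5.3]{IY} and \cite[Proposition 5.2]{JZ}, which are exactly the sources the paper cites for this proposition (the paper itself gives no independent proof), and you correctly identify that the key input beyond that bookkeeping is the holomorphy of the normalized local intertwining operators on $\Re(z)\ge\frac{1}{2}$ supplied by Remark~\ref{ec}(iii). This matches the paper's approach; no gaps.
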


Thanks to Proposition \ref{p1}, we can define the residues of the Eisenstein series to be the limits 
\[ 
\mathcal{E}^{0}(\phi)=\lim_{z \to \frac{1}{2}}(z-\frac{1}{2})E(\phi,z), 
\quad 
\mathcal{E}^{1}(\phi)=\lim_{z \to 1}(z-1)E(\phi,z) \quad  \text{ for } \phi \in \As_{P_{n+2a,a}}^{\sigma \boxtimes \pi}(G_{n+2a}).
\] 
For $i=0,1$, 
let $\mathcal{E}^{i}(\sigma,\pi)$ be the residual representation of $G_{n+2a}(\A_F)$ 
generated by $\mathcal{E}^{i}(\phi)$ for $\phi \in \As_{P_{n+2a,a}}^{\sigma \boxtimes \pi}(G_{n+2a})$. Since the residual representation is isomorphic to the image of some relevant intertwining operator, the residue map $\mc{E}^i : \As_{P_{n+2a,a}}^{\sigma \boxtimes \pi}(G_{n+2a}) \to \mathcal{E}^{i}(\sigma,\pi)$ is $G_{n+2a}-$equivariant.
\par

\begin{rem}  \label{r1}  As we mentioned in Remark \ref{ec} (ii), we can write $BC(\pi)$ as an isobaric sum of the form $\sigma_1  \boxplus \cdots \boxplus \sigma_t $, 
where $\sigma_1,\cdots,\sigma_t$ are 
distinct irreducible unitary cuspidal automorphic representations of some general linear groups 
such that the (twisted) Asai $L$-function $L(s,\sigma_i,As^{(-1)^{n-1}})$ has a pole at $s=1$. Since $L(s,\sigma\times \pi^{\vee})=\Pi_{i=1}^t L(s,\sigma \times \sigma_i^{\vee})$, 
Proposition \ref{p1} implies that $\mathcal{E}^{1}(\sigma,\pi)$ is non-zero 
if and only if $\sigma \simeq \sigma_i$ for some $1 \le i \le t$. 
\end{rem}

\begin{rem}\label{r2}
Let $c \colon E \to E$ be the non-trivial element in $\text{Gal}(E/F)$ and use the same notation for the automorphism of $GL_n(E)$ induced by $c \colon E \to E$. For a representation $\sigma$ of $GL_n(\A_E)$, define $\sigma^{c}:=\sigma \circ c$. 
Note that $L(s,\sigma,As^{\pm})$ are nonzero at $s=1$ by \cite[Theorem 5.1]{Sha}. 
Thus if $L(s,\sigma,As^{(-1)^{n-1}})$ has a pole at $s=1$, the Rankin--Selberg $L$-function 
\[
L(s,\sigma \times \sigma^c)=L(s,\sigma,As^{+})\cdot L(s,\sigma,As^{-})
\]
has a simple pole at $s=1$ and so $\sigma^c \simeq \sigma^{\vee}$.
\end{rem}

\begin{rem} \label{r3} Let $\pi$ be an irreducible cuspidal automorphic representation of $G_{n}(\A_F)$ with generic $A$-parameter
and $\sigma$ an irreducible unitary cuspidal automorphic representation of $GL_a(\A_E)$. Let $\mc{E}$ be an automorphic representation of $G_{m+2a}$. Then for $\phi \in \Ind_{P_{n+2a,a}}^{G_{n+2a}}(\sigma\boxtimes \pi), f \in \nu_{\psi^{-1},\mu^{-1},W_{m+2a}}$ and $\vi \in \mc{E}$, $ \mathcal{P}(E(\phi,z),\Theta_{\psi^{-1},\mu^{-1},W_{m+2a}}(f),\vi)$ is well defined for all $z\in \CC$ but the poles of $E(\phi,z)$. By the meromorphy of the Eisenstein series $E(\phi,z)$ and Proposition \ref{inv} (ii), $\mathcal{P}(E(\phi,z),\Theta_{\psi^{-1},\mu^{-1},W_{m+2a}}(f),\vi)$ extends to a meromorphic function on the whole complex plane with possible poles contained in the set of poles of $E(\phi,z)$. 
\end{rem}

\section{\textbf{Reciprocal non-vanishing of the Fourier--Jacobi periods}}
n this section, we present the proof of Theorem \ref{rthm}, which asserts the reciprocal non-vanishing of the Fourier--Jacobi periods. This result is of significant importance in establishing our main theorem. The proof relies on several lemmas that we introduce and prove along the way.

Let $m,a$ be positive integers and $r$ non-negative integer. Write $n=m+2r$ and let $\left(W_{n+2a},(\ ,\ )\right)$ be a  skew-hermitian spaces over $E$ of dimension $n+2a$. Let $X,X^*$ be both $r$-dimensional isotropic subspaces of $W_{n+2a}$ which are in duality with respect to $(\cdot , \cdot )$ and $\left( W_{m+2a},(\ , \ )\right)$ be its orthogonal complement. Then one has the polar decomposition $W_{n+2a}=X \oplus W_{m+2a} \oplus X^*$.

Suppose that $\left( W_{m+2a},(\ , \ )\right)$ also has the polar decomposition $W_{m+2a}=Y_a \oplus W_m \oplus Y_a^{*},$ 
where $Y_a, Y_a^{*}$ are both $a$-dimensional isotropic subspaces of $W_{m+2a}$ which are in duality with respect to $(\cdot , \cdot )$.  
\par For $h \in GL(Y_a(\A_E)), \ b \in \text{Hom}(W_m,Y_a), \ c \in \text{Hom}(Y_a^*,Y_a)$, we define $h^* \in GL(Y_a^{*}(\A_E)), \ b^* \in \text{Hom}(Y_a^*,W_m), \ c^*\in \text{Hom}(Y_a,Y_a^*)$ by requiring that
\begin{align*} &(h^*x,y)=(x,hy)  ,  \quad  (b^*x,z)=(x,bz)  , \quad (c^*y,x)=(y,cx)\ \ \text{ for all } x\in Y_a^*(\A_E), \ y\in Y_a(\A_E), \ z\in W_m(\A_E).
\end{align*}
Let $\big(\rho_a,\mathcal{S}(Y_a^* (\A_E))\big)$ be the Heisenberg representation of $\mathcal{H}_{Y_a \oplus Y_a^*}(\A_F)$ with respect to $\psi^{-1}$. By restriction of scalars, we view $W_m$ as a symplectic space over $F$ with a symplectic form $\Tr_{E/F}\big( ( \ ,\ ) \big)$. Let $W_m=Y\oplus Y^*$ be a Witt decomposition of $W_m$ so that $Y,Y^*$ are maximal isotropic subspaces of $W_m$ which are in duality with respect to $\Tr_{E/F}\big( ( \ ,\ ) \big)$. Denote $\mathcal{S}(Y(\A_F))$ by $\mathcal{S}^0$ and let $(\rho^0,\mathcal{S}^0)$ be the Heisenberg representation of $\mathcal{H}_{W_m}(\A_F)$ with respect to $\psi^{-1}$. The action of $\mathcal{H}_{W_m}(\A_F)$ on $\mathcal{S}^0$ is given as follows: for $f \in \mathcal{S}^0$ and $y\in Y(\A_F)$,
\begin{itemize}
\item $(\rho^0(0,z)(f))(y)=\psi^{-1}(z)\cdot f(y)$, \quad \quad \quad \ \ for $z\in \A_F$
\item $(\rho^0(y_1,0)(f))(y)=f(y_1+y)$, \quad \quad \quad \quad \ \ \ for $y_1 \in Y(\A_F)$
\item $(\rho^0(y^*,0)(f))(y)=\psi^{-1}((y^*,y))\cdot f(y)$, \quad for $y^* \in Y^*(\A_F)$.
\end{itemize}

Then $\mathcal{S}=\mathcal{S}(Y_a^*(\A_E)) \otimes \mathcal{S}^0$ provides a mixed model for both the Heisenberg representation $\rho$ of $\mathcal{H}_{W_{m+2a}}(\A_F)$ and the global Weil representation $\nu_{\psi^{-1},\mu^{-1},W_{m+2a}}=\otimes \nu_{\psi_v^{-1},\mu_v^{-1},W_{m+2a}}$ of $H_{n+2a,r}(\A_F)$. (Here, we regard $\mathcal{S}$ as a space of functions on $Y_a^*$ whose values in $\mathcal{S}^0$.)

Using [\cite{Ku}, Theorem 3.1], we can describe the (partial) $N_{n,r}(\A_F) \rtimes P_{m+2a,a}(\A_F) -$action of the global Weil representation $\nu_{\psi^{-1},\mu^{-1},W_{m+2a}}$ on $\mathcal{S}$ as follows: (see also [\cite{gi}, page 35])

For $f \in \mathcal{S}$ and $y \in Y_a^*(\A_E)$,
\begin{align}
& \label{w1} \left(\nu_{\psi^{-1},\mu^{-1},W_{m+2a}}(n)f\right)(y)=\nu_{\psi^{-1},\mu^{-1},W_{m}}(n)\left( f(y) \right), \ \ \quad \quad \quad \quad \quad  \quad  \quad \ \ n \in N_{n,r}(\A_F),
\\& \label{w2} \left(\nu_{\psi^{-1},\mu^{-1},W_{m+2a}}(g_0)f\right)(y)=\nu_{\psi^{-1},\mu^{-1},W_{m}}(g_0)\left(f(y)\right), \ \ \ \quad \quad \quad \quad \quad \quad \ \ g_0\in U(W_m)(\A_F),
\\& \label{w3}  \left(\nu_{\psi^{-1},\mu^{-1},W_{m+2a}}(h_0)f\right)(y)=\mu^{-1}(h_0)|\det(h_0)|_{\A_E}^{\frac{1}{2}}\cdot f(h_0^*y), \ \ \quad \quad \quad \quad \ \ h_0\in GL(Y_a(\A_E)),
\\& \label{w4} \left(\nu_{\psi^{-1},\mu^{-1},W_{m+2a}}(b)f\right)(y)= \rho^0(b^*y,0)\left(f(y)\right), \ \quad \quad \quad \quad \quad \quad \quad \quad \quad \quad \ \ b\in  \text{Hom}(W_m,Y_a)(\A_E),
\\& \label{w5}  \left(\nu_{\psi^{-1},\mu^{-1},W_{m+2a}}(c)f\right)(y)=\psi^{-1}(\frac{\mathrm{Tr}_{E/F} (cy,y) }{2}) \cdot f(y), \ \quad \quad \quad \quad \quad  \quad\ \ \ c\in  \text{Herm}(Y_a^*,Y_a)(\A_E),
\\& \label{w6} \left( \rho(x+x',0)f \right)(y)=\psi^{-1}(\mathrm{Tr}_{E/F}(y,x)+\frac{\mathrm{Tr}_{E/F}(x',x)}{2}) \cdot f(y+x'), \quad \quad  x\in Y_a(\A_E),  x' \in Y_a^*(\A_E)
\end{align}
where \[ \text{Herm}(Y_a^*,Y_a)=\{c \in \text{Hom}(Y_a^*,Y_a) \mid c^*=-c\}
\]
and we identified $\text{Herm}(Y_a^*,Y_a)$ and $\Hom(W_m,Y_a)$ as subgroups of $U_{m+2a,a}$ via the canonical isomorphism $U_{m+2a,a}\simeq\Hom(W_m,Y_a) \ltimes \text{Herm}(Y_a^*,Y_a)$.

By choosing a theta functional $\theta_1:\mathcal{S}^0 \to \CC$, we define theta functions associated to $\mathcal{S}$ as follows:\par For $f \in \mathcal{S}(Y_a^*(\A_E)) \otimes \mathcal{S}^0$, 
 its associated theta function $\Theta_{\psi^{-1},\mu^{-1},W_{m+2a}}(\cdot ,f)$ is defined by
\[
\Theta_{\psi^{-1},\mu^{-1},W_{m+2a}}(h,f)
=\sum_{y \in Y_a(E)} \theta_{1}\left(\left(\nu_{\psi^{-1},\mu^{-1},W_{m+2a}}(h)f\right)(y)\right),
\] where $h=((u,n),g) \in \left( (U_{n+2a, r}\rtimes \mathcal{N}_r)\rtimes G_{m+2a}\right) (\A_F)=(N_{n+2a,r} \rtimes G_{m+2a})(\A_F)=H_{n+2a,r}(\A_F)$. \\ \noindent Then $\Theta_{\psi^{-1},\mu^{-1},W_{m+2a}}(f) \in \As(H_{n+2a,r})$ and 
the space of these theta functions $\{ \Theta_{\psi^{-1},\mu^{-1},W_{m+2a}}( f) \ | \ f\in \mathcal{S} \}$ 
is a realization of the global Weil representation $\nu_{\psi^{-1},\mu^{-1},W_{m+2a}}$ as an automorphic representation of $H_{n+2a,r}(\A_F)$. 
By one of main results of Weil \cite{We}, the global Weil representation has a unique (up to scailing) automorphic realization. Thus we can choose $\theta_1$ so that this theta function coincides with the one we defined in Section 1.
\par

Since we have fixed $\mu, \psi$, we simply write $\nu_{W_{m+2a}}$ for $\nu_{\psi^{-1},\mu^{-1},W_{m+2a}}$ 
and its associated theta function $\Theta_{\psi^{-1},\mu^{-1},W_{m+2a}}( f)$ as $\Theta_{W_{m+2a}}( f)$. Write $\Theta_{P_{m+2a,a}}( f)$ for the constant term of $\Theta_{W_{m+2a}}( f)$ along $P_{m+2a,a}$. 
\par

From now on,  $G_{k}$ denotes the isometry group of $W_{k}$ for various $k$.
\begin{lem}[cf. \cite{Y}{,  Lemma 6.2]}] \label{l0} For $h \in H_{n+2a,r}(\A_F)$, $f \in \mathcal{S}(Y_a^*(\A_E)) \otimes \mathcal{S}^0$, 
\[
\Theta_{P_{m+2a,a}}\left(h,f\right)
=\theta_1\left( \left(\nu_{W_{m+2a}}(h)f\right)(0)\right)
.\]

\end{lem}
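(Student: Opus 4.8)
The plan is to compute the constant term $\Theta_{P_{m+2a,a}}(h,f) = \int_{[U_{m+2a,a}]} \Theta_{W_{m+2a}}(uh,f)\, du$ by unwinding the definition of the theta function as a sum over $y \in Y_a(E)$ and using the explicit formulas $(\ref{w1})$--$(\ref{w5})$ for the action of $U_{m+2a,a}$. First I would write $\Theta_{W_{m+2a}}(uh,f) = \sum_{y \in Y_a(E)} \theta_1\big( (\nu_{W_{m+2a}}(uh)f)(y)\big)$ and, using the decomposition $U_{m+2a,a} \simeq \Hom(W_m,Y_a) \ltimes \Herm(Y_a^*,Y_a)$, split the integration over $[U_{m+2a,a}]$ into an integral over $b \in [\Hom(W_m,Y_a)]$ and $c \in [\Herm(Y_a^*,Y_a)]$. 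Applying $(\ref{w4})$ and $(\ref{w5})$ to $\nu_{W_{m+2a}}(bc)(\nu_{W_{m+2a}}(h)f)$ and writing $f' = \nu_{W_{m+2a}}(h)f$, the summand at $y$ becomes $\psi^{-1}\!\big(\tfrac{1}{2}\mathrm{Tr}_{E/F}(cy,y)\big)\,\theta_1\big(\rho^0(b^*y,0)(f'(y))\big)$.

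The key step is then the integration. The integral over $c \in [\Herm(Y_a^*,Y_a)]$ of $\psi^{-1}\big(\tfrac12\mathrm{Tr}_{E/F}(cy,y)\big)$ is an orthogonality-of-characters computation: since $[\Herm(Y_a^*,Y_a)]$ is compact of volume $1$, this integral is $1$ if $y = 0$ and $0$ otherwise, because for $y \ne 0$ the character $c \mapsto \psi^{-1}\big(\tfrac12\mathrm{Tr}_{E/F}(cy,y)\big)$ of $\Herm(Y_a^*,Y_a)(\A_F)$ is nontrivial on $\Herm(Y_a^*,Y_a)(F)\backslash\Herm(Y_a^*,Y_a)(\A_F)$ (one needs that $\psi$ is nontrivial on $E\backslash\A_E$ and that $(cy,y)$ can be made to take any value in the relevant subspace as $c$ ranges over $\Herm$, using $y \ne 0$). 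This collapses the sum over $y \in Y_a(E)$ to the single term $y = 0$, giving $\int_{[\Hom(W_m,Y_a)]} \theta_1\big(\rho^0(b^*0,0)(f'(0))\big)\, db = \int_{[\Hom(W_m,Y_a)]}\theta_1(f'(0))\, db = \theta_1(f'(0))$ since $b^*0 = 0$ and the volume of $[\Hom(W_m,Y_a)]$ is $1$. Recalling $f'(0) = (\nu_{W_{m+2a}}(h)f)(0)$ yields exactly $\Theta_{P_{m+2a,a}}(h,f) = \theta_1\big((\nu_{W_{m+2a}}(h)f)(0)\big)$.

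The main obstacle I anticipate is verifying the nonvanishing/vanishing of the character integral over $[\Herm(Y_a^*,Y_a)]$ with the correct normalization — specifically, checking that for $y \ne 0$ the pairing $c \mapsto \mathrm{Tr}_{E/F}(cy,y)$ does not vanish identically on $\Herm(Y_a^*,Y_a)$ (this is where the skew-hermitian structure and the fact that $\frac12$ makes sense enter), and making sure the interchange of the sum over $y \in Y_a(E)$ with the compact integration over $[U_{m+2a,a}]$ is legitimate (justified by the rapid decay of Schwartz functions, so that the sum converges absolutely and uniformly, allowing Fubini). Both are routine once set up, but the character-sum step is the crux, and it is precisely the point where the choice of $\psi$ and the identification $U_{m+2a,a} \simeq \Hom(W_m,Y_a) \ltimes \Herm(Y_a^*,Y_a)$ must be used carefully; this is the analogue of \cite[Lemma 6.2]{Y}.
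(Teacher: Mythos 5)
Your proposal is correct and follows essentially the same route as the paper's own proof: decompose $U_{m+2a,a} \simeq \Hom(W_m,Y_a) \ltimes \Herm(Y_a^*,Y_a)$, use $(\ref{w5})$ and orthogonality of characters on $[\Herm(Y_a^*,Y_a)]$ to collapse the theta sum to the $y=0$ term, then observe via $(\ref{w4})$ that the remaining $b$-integral is trivial because $b^*0 = 0$. The only difference is that you spell out the nontriviality of the character for $y\neq 0$ and the Fubini justification, which the paper simply asserts.
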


\begin{proof}
Using the isomorphism $U_{m+2a,a}\simeq \Hom(W_m,Y_a) \ltimes \text{Herm}(Y_a^*,Y_a)$,  we take Haar measure $du=db \ dc$ on $U_{m+2a,a}$ for $u=bc$ with $b \in \Hom(W_m,Y_a), \ c \in \text{Herm}(Y_a^*,Y_a) $. 
For $y \in Y_a^*(\A_E)$, $\psi^{-1}(\frac{\mathrm{Tr}_{E/F} (cy,y) }{2})$ is the trivial character of $c \in \text{Herm}(Y_a^*,Y_a)$ if and only if $y=0$. Therefore by (\ref{w5}),
\[ \int_{\text{Herm}(Y_a^*,Y_a)}\Theta_{W_{m+2a}}((1,c)h,f) dc=\text{meas}(\text{Herm}(Y_a^*,Y_a)) \cdot \theta_1\left( \left(\nu_{W_{m+2a}}(h)f\right)(0)\right).
\] 
By ($\ref{w4}$), $\left(\nu_{W_{m+2a}}((1,b)h)(f)\right)(0)=\nu_{W_{m+2a}}(h)(f)(0)$ for all $b\in \Hom(W_m,Y_a)$ and so the claim follows.
\end{proof}

\begin{rem}\label{eval}Let $e$ be the identity element of $H_{n+2a,r}(\A_F) = N_{n+2a, r}(\A_F) \rtimes G_{m+2a}(\A_F)$. Observe that 
\[\Theta_{P_{m+2a,a}}\left(e,f\right)
=\theta_1\left( f(0)\right)
\]
and every Schwartz function in $\mathcal{S}^0$ can be obtained as evaluation at $0\in Y_a^*(\A_F)$ of some Schwartz function in $\mathcal{S}$. So we can regard theta functions in $\nu_{W_m}$ as the evaluation at $e$ of 
the constant terms of theta functions in $\nu_{W_{m+2a}}$ along $P_{m+2a,a}$. This observation will be used in the proof of Lemma \ref{l3}.
\end{rem}

\begin{rem}\label{ind}From (\ref{w3}),  we have
\[ 
\Theta_{P_{m+2a,a}}\left((u,hg),f\right)
=\mu^{-1}(h)|\det(h)|_{\A_E}^{\frac{1}{2}}\cdot \Theta_{P_{m+2a,a}}\left((u,g),f\right)
\] 
for $h \in GL(Y_a(\A_E))$. 
Thus from the the Lemma \ref{l0} and $(\ref{w1}), (\ref{w2})$, we see that the constant terms of theta functions $\{\Theta_{P_{m+2a,a}}(\cdot, f)\}_{f\in  \mathcal{S}}$ 
belong to the induced representation 
\[
\mathrm{ind}
_{GL_a(\A_E) \times (N_{n,r}(\A_F) \rtimes G_{m}(\A_F))}
^{N_{n+2a,r}(\A_F) \rtimes G_{m+2a}(\A_F)}
\mu^{-1} |\cdot|_{\A_E}^{\frac{1}{2}} \boxtimes \nu_{W_m}.
\] 
\end{rem}

The following vanishing lemma plays an essential role in proving Theorem \ref{rthm}.
\begin{lem}\label{l1}
Let $\sigma$ be an irreducible cuspidal automorphic representation of $GL_a(\A_E)$, 
$\pi_1,\pi_2$ irreducible cuspidal automorphic representations with generic $A$-parameters
of $G_n(\A_F)$ and  $G_m(\A_F)$, respectively. 
Write $BC(\pi_2)$ as an isobaric sum $\sigma_1 \boxplus \cdots \boxplus \sigma_t$, 
where $\sigma_1,\cdots,\sigma_t$ are distinct irreducible unitary cuspidal automorphic representations 
of the general linear groups such that 
the (twisted) Asai $L$-function $L(s,\sigma_i,As^{(-1)^{m-1}})$ has a pole at $s=1$.
Assume that $\sigma\simeq \sigma_i$ for some $1\le i \le t$. Then \[ \mathcal{P}(E(\phi,z), \Theta_{W_{m+2a}}(f),\vi)=0\] 
for all $ \phi\in \As_{P_{n+2a,a}}^{\mu \cdot \sigma^{\vee}\boxtimes\pi_1}(G_{n+2a}), \vi\in \mathcal{E}^1(\sigma,\pi_2)$ 
and $f\in \nu_{W_{m+2a}}$.
\end{lem}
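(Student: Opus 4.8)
The plan is to exhibit $\vi$ as a residue of an Eisenstein series and to reduce the vanishing to an analyticity statement in the residual parameter. By linearity of $\mc{P}$ in its first argument we may assume $\vi=\mathcal{E}^1(\phi_0)$, the residue at $z'=1$ of $E(\phi_0,z')$, for some $\phi_0\in\As_{P_{n+2a,a}}^{\sigma\boxtimes\pi_1}(G_{n+2a})$. Because the regularized period $\mc{P}(\,\cdot\,,\Theta(f),E(\phi,z))$ is obtained from the mixed truncation $\Lambda_m^{T'}$ by extracting the coefficient of the zero exponent (Propositions \ref{poly} and \ref{inv}), and for generic $(z',z)$ the triple $(E(\phi_0,z'),\Theta(f),E(\phi,z))$ lies in $\As(\textbf{G}_{n+2a},\textbf{G}_{m+2a})_{\chi}^{**}$ and depends meromorphically on $z'$, one obtains
\[
\mc{P}(\vi,\Theta(f),E(\phi,z))=\Res_{z'=1}\mc{P}(E(\phi_0,z'),\Theta(f),E(\phi,z)),
\]
and it suffices to show that $z'\mapsto\mc{P}(E(\phi_0,z'),\Theta(f),E(\phi,z))$ is holomorphic at $z'=1$ for all but finitely many $z$; the general $z$ then follows by meromorphic continuation.

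To analyze this function I would unfold the Eisenstein series $E(\phi,z)$. By Lemma \ref{rd} the form $\Lambda_m^{T'}(E(\phi_0,z')\otimes\Theta(f))$ is rapidly decreasing, so $\int_{[G_{m+2a}]^1}\Lambda_m^{T'}(E(\phi_0,z')\otimes\Theta(f))(g)E(\phi,z)(g)\,dg$ converges absolutely and unfolds along $P_{m+2a,a}(F)\bs G_{m+2a}(F)$; since the inducing datum $\mu\sigma^{\vee}\boxtimes\pi_2$ is cuspidal, $E(\phi,z)_{P'}$ vanishes for every standard parabolic $P'\ne G_{m+2a},\,P_{m+2a,a}$, so only $P_{m+2a,a}$ contributes nontrivially. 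After a further unfolding along $U_{m+2a,a}$ the integration over $[G_m]$ against the cuspidal $\pi_2$ extracts the $\pi_2^{\vee}$-isotypic part of the $G_m$-component of the relevant constant term of $\vi\otimes\Theta(f)$; locally this part is controlled by the Hom space $\Hom_{(N_{n+2a,r}\rtimes G_{m+2a})(F_v)}\big(\mathcal{E}^1(\sigma,\pi_1)_v\otimes\nu_{W_{m+2a},v}\otimes\Ind_{P_{m+2a,a}}^{G_{m+2a}}(\mu_v\sigma_v^{\vee}|\cdot|_E^{z}\boxtimes\pi_{2,v}),\CC\big)$, which Lemma \ref{Jac} rewrites — via the isomorphism of part (i) when $r>0$ and the dimension bound of part (ii) when $r=0$ — in terms of a Fourier–Jacobi Jacquet module of the local component of the residual representation $\mathcal{E}^1(\sigma,\pi_1)$.

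The heart of the matter is the location of the pole. Write $P$ for the parabolic of $G_{n+2a}$ with $P\cap G_{m+2a}=P_{m+2a,a}$, so that $M_P\simeq GL(Y_a)\times G_n$. The constant term $E(\phi_0,z')_P$ splits as a holomorphic section of $\aa_{P_{m+2a,a}}$-exponent $z'$ plus the intertwined section $M(z')\phi_0$ of exponent $-z'$, and by Proposition \ref{p1} the whole pole of $E(\phi_0,z')$ at $z'=1$ sits in the latter; by Remark \ref{ind} the constant term $\Theta_{P_{m+2a,a}}(f)$ contributes the $GL(Y_a)$-exponent $\tfrac12$ with $G_m$-part a theta function for $\nu_{W_m}$; and by Remark \ref{r2} the general linear components pair up through $\sigma^{c}\simeq\sigma^{\vee}$. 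Hence the exponents in $T'$ produced by the $P_{m+2a,a}$-term have $\aa_{P_{m+2a,a}}$-component in $\{\pm z'+\tfrac12\pm z\}$, so the pole at $z'=1$ can occur only in the coefficients of the nonzero exponents $-\tfrac12\pm z$ (for generic $z$), not in the coefficient of the zero exponent, which is precisely $\mc{P}(E(\phi_0,z'),\Theta(f),E(\phi,z))$; the finiteness supplied by Lemma \ref{Jac} guarantees no further pole is created by the $[G_m]$-integration. Therefore $\mc{P}(E(\phi_0,z'),\Theta(f),E(\phi,z))$ is holomorphic at $z'=1$, and taking the residue gives $\mc{P}(\vi,\Theta(f),E(\phi,z))=0$.

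As flagged in the introduction, this computation must be run separately for $r>0$ and $r=0$, exactly as for Lemma \ref{Jac}: when $r>0$ the period lives on the non-reductive group $N_{n+2a,r}\rtimes\textbf{G}_{m+2a}$ and the constant terms above are those of Section 2, the Weil representation being treated through the mixed model and the partial formulas (\ref{w1})--(\ref{w5}); when $r=0$ one is in the equal-rank case modelled on \cite{Y}. The main obstacle is not the exponent bookkeeping but the rigorous passage, in the regularized setting, from the global period to the unfolded expression: one must justify unfolding $E(\phi,z)$ against the truncated form, interchanging the constant-term operation along $U_{m+2a,a}$ with the mixed truncation, and the reduction to the single parabolic $P_{m+2a,a}$ — after which Lemma \ref{Jac}, Remark \ref{ind} and Remark \ref{r2} combine to yield holomorphy at $z'=1$ and hence the vanishing.
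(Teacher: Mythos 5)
The paper proves this lemma by a local argument that you never use. It observes, via Proposition \ref{inv}(iii), that $\mathcal{P}(\,\cdot\,,\Theta(f),E(\phi,z))$ defines an element of $\Hom_{N_{n+2a,r}(\A_F)\rtimes G_{m+2a}(\A_F)}\big(\mathcal{E}^1(\sigma,\pi_1)\otimes\nu_{W_{m+2a}}\otimes\Ind(\mu\sigma^\vee|\cdot|^z\boxtimes\pi_2),\CC\big)$, then fixes a finite split place $v$ where everything is unramified and identifies the local component of the residual representation explicitly as $\Sigma=\Ind^{GL_{n+2a}}_{\mathcal{P}_{(3,\dots,3,1,\dots,1)}}(\chi_1\circ\det_{GL_3}\boxtimes\cdots)$. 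The crux is that $\chi\circ\det_{GL_3}$ has vanishing Bernstein--Zelevinsky derivative $(\chi\circ\det_{GL_3})^{(t)}=0$ for $t\ge 2$ and $(\chi\circ\det_{GL_3})_{(s)}=0$ for $s\ge1$; feeding this into the Jacquet module machinery (Lemma \ref{Jac}, \cite[Theorem 6.5, Proposition 6.11]{gjr}) shows the local Hom space $(\ref{ob})$ is zero for almost all $q_E^{-z}$, hence the global functional vanishes for $\Re(z)\gg0$ and then identically by holomorphy in $z$.

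Your proposal takes a genuinely different analytic route, but it has a circularity at its core. You want to show $z'\mapsto\mathcal{P}(E(\phi_0,z'),\Theta(f),E(\phi,z))$ is holomorphic at $z'=1$, and justify this by arguing that the singular part $M(z')\phi_0\,d(\cdot)^{-z'}$ of the constant term contributes only nonzero $T'$-exponents $-z'-\tfrac12\pm z$. But that reasoning only controls the $T'$-exponents of the terms $\mathcal{P}^{T'}_{P'}$ for proper $P'$; it says nothing about the zero-exponent coefficient itself. By linearity in the first slot, $\mathcal{P}(E(\phi_0,z'),\Theta(f),E(\phi,z))=\dfrac{\mathcal{P}(\mathcal{E}^1(\phi_0),\Theta(f),E(\phi,z))}{z'-1}+(\text{regular at }z'=1)$, so the holomorphy you assert is \emph{equivalent} to the vanishing you are trying to prove. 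Nothing in the exponent bookkeeping rules out that the residue $\mathcal{P}(\mathcal{E}^1(\phi_0),\Theta(f),E(\phi,z))$ is a nonzero quantity (necessarily $T'$-independent, hence appearing in the zero exponent). The argument does not import any information about the actual representation-theoretic content of $\mathcal{E}^1(\sigma,\pi_1)$; it would "prove" the same conclusion for any residue, which cannot be right. What makes the lemma true is precisely the special local structure of $\mathcal{E}^1(\sigma,\pi_1)$ at a good split place — the $GL_3$-segment produced by the $\sigma\simeq\sigma_i$ hypothesis — and the vanishing of its higher derivatives, which your proposal never invokes. You mention Lemma \ref{Jac} only as a finiteness statement ("no further pole is created"), whereas the paper uses it, together with the derivative computation, to show the relevant Hom space is actually zero.

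A secondary but real gap is the passage $\mathcal{P}(\vi,\Theta(f),E(\phi,z))=\Res_{z'=1}\mathcal{P}(E(\phi_0,z'),\Theta(f),E(\phi,z))$: this requires checking that $(E(\phi_0,z'),\Theta(f),E(\phi,z))$ lies in $\As(\textbf{G},\textbf{G}')^{**}_{\chi}$ for $z'$ near $1$, that the $\#$-integrals and the residue in $z'$ can be interchanged, and that none of the auxiliary exponent conditions degenerate. These are plausible but not automatic, and the paper sidesteps them entirely by the local reduction.
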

\begin{proof}
Since  $G_{m+2a}(\A)=G_{m+2a}(\A)^1$ 
and $\{E(\phi,z) \mid \phi \in \As_{P_{n+2a,a}}^{\sigma \boxtimes \pi_1}(G_{n+2a})\}$ is an automorphic realization of $ \Ind_{P_{n+2a,a}}^{G_{n+2a}}(\sigma|\cdot|^{z} \boxtimes \pi_1)$, 
we can regard the functional $\mathcal{FJ}(E(\phi,z), \Theta_{W_{m+2a}}(f),\vi)$ 
as an element of 
\begin{equation}\label{ob1}
\Hom_{N_{n+2a,r}(\A) \rtimes G_{m+2a}(\A)} \left( 
 \Ind_{P_{n+2a,a}}^{G_{n+2a}}(\sigma|\cdot|^{z} \boxtimes \pi_1) \otimes  \nu_{W_{m+2a}}
\otimes  \mathcal{E}^1(\sigma,\pi_2), \CC 
\right)
\end{equation}\label{ob2}
by Proposition \ref{inv} (iii).

When $\sigma\simeq \sigma_i$, 
the residue $\mathcal{E}^1(\sigma,\pi_2)$ is non-zero by Remark \ref{r1}. 
We first prove the irreducibility of $\mathcal{E}^1(\sigma,\pi_2)$. 
Since the cuspidal support of the residues in $\mathcal{E}^1(\sigma,\pi_2)$ 
consists only of $\sigma |\cdot|^{-1} \boxtimes \pi_2$, 
the residues are square integrable by \cite[Lemma I.4.11]{Mo}. 
Thus $\mathcal{E}^1(\sigma,\pi_2)$ is a unitary quotient of 
$\Ind_{P_{n+2a,a}(\A)}^{G_{n+2a}(\A)}(\sigma |\cdot| \boxtimes \pi_2)$. 
Since the Langlands quotient of $\Ind_{P_{m+2a,a}(\A)}^{G_{m+2a}(\A)}(\sigma|\cdot| \boxtimes \pi_2)$ is its unique semisimple quotient, 
it is isomorphic to $\mathcal{E}^1(\sigma,\pi_2)$. 
\par
We fix a finite place $v$ of $F$ such that the local $v$-components of $\psi, \sigma, \pi_1, \pi_2,\mathcal{E}^1(\sigma,\pi_2)$ are all unramified. For the moment, we consider only the local situation and suppress the subscript $_v$ and drop the field $E_v,F_v$, when it is clear from the context.
Let $B_a$ and $B_k'$ denote the standard Borel subgroups of $GL_a$ and $G_k$, respectively. 
Recall that an irreducible generic unramified representation of $GL_a$ is an irreducible principal series representation.

\noindent By \cite[Theorem 1.5.2]{Ae}, we can write $\pi_2$ as the irreducible unramified constituent of 
\[
\Ind_{B_m'}^{G_m}
(\chi_{1} \boxtimes \cdots \boxtimes \chi_{m} ),
\] 
where $\chi_i$'s are unramified characters of $E^{\times}$ such that $\chi_i=\chi_i'\cdot |\cdot|^{s_i}$. (Here, $\chi_i'$'s are unitary character and $0\le s_i<\frac{1}{2}$.)

Since $\sigma$ is an isobaric summand of $BC(\pi_2)$, by Lemma \ref{r1}, we may assume \[\sigma \simeq \begin{cases}\Ind_{B_a}^{GL_a}(\chi_1  \boxtimes \cdots \boxtimes \chi_b \boxtimes \chi_b^{-1}  \boxtimes \cdots \boxtimes \chi_1^{-1}), \ \quad \quad \quad \quad \quad \quad \quad \quad \text{if} \ \ a=\text{even}, \ \omega_{\sigma}=\text{trivial}  \\
\Ind_{B_a}^{GL_a}(\chi_1  \boxtimes \cdots \boxtimes \chi_{b-1} \boxtimes 1 \boxtimes \chi_0 \boxtimes \chi_{b-1}^{-1}  \boxtimes \cdots \boxtimes \chi_1^{-1}), \quad \quad \quad \text{if} \ \ a=\text{even}, \ \omega_{\sigma}=\text{non-trivial}\\
 \Ind_{B_a}^{GL_a}(\chi_1  \boxtimes \cdots \boxtimes \chi_{b-1} \boxtimes 1 \boxtimes \chi_{b-1}^{-1} \boxtimes \cdots \boxtimes \chi_1^{-1}), \ \quad \quad \quad \quad  \quad   \text{if} \ \ a=\text{odd}, \ \omega_{\sigma}=\text{trivial}\\
  \Ind_{B_a}^{GL_a}(\chi_1  \boxtimes \cdots \boxtimes \chi_{b-1} \boxtimes \chi_0 \boxtimes \chi_{b-1}^{-1} \boxtimes \cdots \boxtimes \chi_1^{-1}), \ \quad \quad \quad \quad   \  \text{if} \ \ a=\text{odd}, \ \omega_{\sigma}=\text{non-trivial}
\end{cases}
\]
where  $\chi_0$ is the unique non-trivial unramified quadratic character of $E^{\times}$.

For a sequence $\mathbf{a}=(a_1,a_2,\ldots,a_t)$ of positive integers whose sum is equal or less than $k$, 
we denote by $\mathcal{P}_{\mathbf{a}}=\mathcal{M}_{\mathbf{a}}\mathcal{U}_{\mathbf{a}}$ (resp. $P_{\mathbf{a}}=M_{\mathbf{a}}U_{\mathbf{a}}$ )
the parabolic subgroup of $GL_k$ (resp. $G_k$) whose Levi subgroup $\mathcal{M}_{\mathbf{a}}$ is isomorphic to $GL_{a_1} \times \dots \times GL_{a_t} \times GL_{k-2\cdot \sum_{i=1}^ta_i}$ (resp. $GL_{a_1} \times \dots \times GL_{a_t} \times G_{k-2\cdot \sum_{i=1}^ta_i}$).
By conjugation of a certain Weyl element, it is not difficult to show that $\mathcal{E}^1(\sigma,\pi_2)$ is the irreducible unramified constituent of
\[ 
\Sigma:=\begin{cases}\Ind_{P_{(1,1)}}^{G_{m+2a}}
\left( \chi_1 |\cdot| \boxtimes \chi_1 |\cdot|^{-1}  \boxtimes \tau \right), \quad \quad \quad \text{if} \ \ a=\text{even}, \ \omega_{\sigma}=\text{trivial},  \\
\Ind_{P_{m+2a,1}}^{G_{m+2a}}
\left( \chi' |\cdot| \boxtimes \tau \right), \quad \quad \quad \quad \quad  \quad \quad  \text{ other cases,}
\end{cases}
\] 
where $\chi'$ is a quadratic character of $E^{\times}$ and $\tau$ is an irreducible smooth representation of $G_{m+2a-2}$.

Now we claim that \beq \label{ob3} \Hom_{N_{n+2a,r} \rtimes G_{m+2a}} \left( 
 \Ind_{P_{n+2a,a}}^{G_{n+2a}}(\sigma|\cdot|^{z} \boxtimes \pi_1) \otimes  \nu_{W_{m+a}}
\otimes  \mathcal{E}^1(\sigma,\pi_2), \CC 
\right) \eeq is zero for almost all $q^{-z}$. By Lemma \ref{Jac}, 
\[ (\ref{ob3}) \simeq \Hom_{G_{m+2a}} \left( 
J_{\psi_{r-1}}\left(\Ind_{P_{n+2a,a}}^{G_{n+2a}}(\sigma|\cdot|^{z} \boxtimes \pi_1) \otimes  \Omega_{W_{m+2a}} \right) ,{\mathcal{E}^1(\sigma,\pi_2)}^{\vee} \right).\]

Therefore, we will show  
\beq \label{ob4}
\Hom_{ G_{m+2a}} \left( J_{\psi_{r-1}}\left(\Ind_{P_{n+2a,a}}^{G_{n+2a}}(\sigma|\cdot|^{z} \boxtimes \pi_1) \otimes  \Omega_{W_{m+2a}} \right) 
, \Sigma^{\vee} 
\right)=0
\eeq
for almost all $q^{-z}$.

For a character $\chi$ of $F^{\times}$, the Bernstein-Zelevinski derivative of $\chi(k):=\chi (\det_{GL_k})$ is given by
\[ 
\left( \chi (k) \right)^{(t)}
=
\begin{cases} 
\chi(k-t)  &\text{ if } t \le 1, \\ 
0  & \text{ if } t \ge 2.
\end{cases}
\]

For a partition $(m_1,\cdots,m_t)$ of $k$ and characters $\mu_i$'s, let $\tau= \text{Ind}_{\mc{P}_{(m_1,\cdots,m_t)}}^{GL_k} \mu_1(m_1) \otimes \cdots \otimes \mu_t(m_t)$. Using the Leibniz rule of the Bernstein-Zelevinsky derivative, it is easy to see that for every non-negative integer $j$,
\beq \label{ee1}
 |\cdot|^{\frac{1-j}{2}}\cdot \tau^{(j)} \simeq \bigoplus_{  \substack{ i_1+\cdots + i_t=j \\ 0 \le i_1, \cdots, i_t \le 1}} \text{Ind}_{\mc{P}_{(m_1-i_1,\cdots,m_t-i_t)}}^{GL_{k-j}} |\cdot|^{\frac{1-i_1}{2}}\cdot \mu_1(m_1-i_1) \otimes \cdots \otimes|\cdot|^{\frac{1-i_t}{2}}\cdot  \mu_t(m_t-i_t)
\eeq 
up to semisimplication.

Using (\ref{two}), write $J_{\psi_{r-1}}\left(\Ind_{P_{n+2a,a}}^{G_{n+2a}}(\sigma|\cdot|^{z} \boxtimes \pi_1) \otimes  \Omega_{W_{m+2a}} \right)$ as

\beq \label{jac} J_{\mc{H}_{W_{m+2a}}/C_{r-1}}(J_{\psi_{r-1}^0}(\Ind_{P_{n+2a,a}}^{G_{n+2a}}(\sigma|\cdot|^{z} \boxtimes \pi_1) )\otimes \Omega_{W_{m+2a}}).\eeq
Now we apply \cite[Theorem 6.1]{gjr} with $j=a, \tau=\sigma$ and $l=r-1$.
Then by (\ref{jac}), \cite[Proposition 6.6]{gjr} and \cite[Proposition 6.7]{gjr}, 
\[
J_{\psi_{r-1}}\left(\Ind_{P_{n+2a,a}}^{G_{n+2a}}(\sigma|\cdot|^{z} \boxtimes \pi_1) \otimes  \Omega_{W_{m+2a}} \right)
\]

is decomposed as
\begin{equation} \label{e1} 
\bigoplus_{\substack{0\le t \le \min\{a,r-1\}  }}\mathrm{Ind}^{G_{m+2a}}_{P_{m+2a,a-t}} 
\left(
 \mu^{-1} |\cdot|^{\frac{1-t}{2}}\cdot (\sigma |\cdot|^z)^{(t)} \boxtimes J_{\psi_{r-1-t}}(\pi_1 \otimes \Omega_{W_{m+2t}})
\right)
\end{equation}

\begin{equation} \label{e2} \bigoplus \begin{cases} \mathrm{Ind}^{G_{m+2a}}_{P_{m+2a,a-r}}\left(\mu^{-1} \rvert \cdot \lvert^{\frac{1-r}{2}}(\sigma |\cdot|^z)^{(r)} \boxtimes (\pi_1 \otimes \omega_{W_{n}})\right),\quad \quad \ \ \text{ if } r\le a \\  0, \ \  \quad\quad\quad\quad\quad\quad\quad\quad\quad\quad\quad\quad\quad\quad\quad\quad\quad\quad \quad \quad \quad \quad \quad \text{ otherwise.} 
\end{cases}
\end{equation}
up to semisimplicaition. (Here, we reflected some typos in \cite[Proposition 6.7]{gjr}.)

Write $\pi_1=\Ind_{B_{n}'}^{G_{n}}
(\lambda_{1} \boxtimes \cdots \boxtimes \lambda_{[\frac{n}{2}]})$ for some unramified characters $\lambda_i$'s of $E^{\times}$ such that $\lambda_i=\lambda_i'\cdot |\cdot|^{s_i'}$. (Here, $\lambda_i'$ is a unitary character and $0\le s_i'<\frac{1}{2}$.)

Consider the map $T:\pi_1 \otimes \omega_{W_{n}} \to \text{Ind}_{B_{n}'}^{G_n}\big(\mu^{-1}|\cdot|^{\frac{1}{2}}\cdot (\lambda_{1} \boxtimes \cdots \boxtimes \lambda_{[\frac{n}{2}]})\big)$
 given by
\[T(f \otimes \phi)(h)=f(h) \otimes (\omega_{W_{n}}(h)\phi)(0).
\]

Then using arguments similar to \cite[Proposition 6.6]{gjr}, we can show that $T$ is well-defined and an isomorphism. Therefore, an irreducible unramified subquotient of $\mathrm{Ind}^{G_{m+2a}}_{P_{m+2a,a-r}}\left(\mu^{-1} \rvert \cdot \lvert^{\frac{1-r}{2}} \cdot(\sigma |\cdot|^z)^{(r)} \boxtimes (\pi_1 \otimes \omega_{W_{n}})\right)$ is that of 
 \begin{equation} \label{e3}\mathrm{Ind}^{G_{m+2a}}_{B'_{m+2a}}\left(\mu^{-1} ( |\cdot|^{z+\frac{1}{2}}\cdot \chi_{i_1}\boxtimes \cdots \boxtimes|\cdot|^{z+\frac{1}{2}}\cdot  \chi_{i_{a-r}}\boxtimes |\cdot|^{\frac{1}{2}} \cdot \lambda_{1} \boxtimes \cdots \boxtimes |\cdot|^{\frac{1}{2}}\cdot \lambda_{[\frac{n}{2}]} ) \right),\end{equation}
 where $\{i_1,\cdots,i_{a-r}\} \ss \{1,2,\cdots,a\}$.
However, \cite[Theorem 2.9]{BZ} implies that $\Sigma^{\vee}$ has no common composition factor with (\ref{e3}) for almost all $q^{-z}$. Therefore,
\[\Hom_{G_{m+2a}} \left( (\ref{e2})
, \Sigma^{\vee} 
\right)=0\]
for almost all $q^{-z}$ and so we are sufficient to show that \[\Hom_{G_{m+2a}} \left( (\ref{e1})
, \Sigma^{\vee} 
\right)=0.\]

For $0 \le i \le n-1$, put
\[
\pi_{1,i}\simeq \Ind_{B_{n-2i}'}^{G_{n-2i}}
(\lambda_{i+1} \boxtimes \cdots \boxtimes \lambda_{[\frac{n}{2}]} ).\]

We apply \cite[Theorem 6.1]{gjr} with $j=1,\tau=\lambda_1,l=r-1-t$. Then by \cite[Proposition 6.6]{gjr} and \cite[Proposition 6.7]{gjr},
\[
J_{\psi_{r-1-t}}\left(\pi_{1}\otimes  \Omega_{W_{m+2t}} \right)
\]
is decomposed as
\begin{equation} \label{e4} 
\bigoplus_{\substack{0\le t_1 \le 1 \\  -\frac{m}{2}-t<t_1 \le r-1-t }}\mathrm{Ind}^{G_{m+2t}}_{P_{m+2t,1-t_1}} 
\left(
  \mu^{-1} |\cdot|^{\frac{1-t_1}{2}}\cdot (\lambda_1)^{(t_1)} \boxtimes J_{\psi_{r-1-t-t_1}}(\pi_{1,1} \otimes \Omega_{W_{m+2(t+t_1-1)}})
\right)
\end{equation}
\begin{equation} \label{e5} \bigoplus \begin{cases} \pi_{1,1}\otimes \omega_{W_{m+2t}},\quad \ \ \text{ if } r-1-t=0 \\  0, \ \  \quad\quad\quad\quad\quad\quad \ \text{   otherwise.} 
\end{cases}
\end{equation}

When $r-1-t=0$, using the similar argument in the above, we have
\[\Hom_{G_{m+2a}} \left( \mathrm{Ind}^{G_{m+2a}}_{P_{m+2a,a-t}} 
\left(
 \mu^{-1}  |\cdot|^{\frac{1-t}{2}}\cdot (\sigma |\cdot|^z)^{(t)} \boxtimes  (\ref{e5})
, \Sigma^{\vee} \right)
\right)= 0\]
for almost all $q^{-z}$.

Therefore, we consider only when $r-1-t>0$ and it is enough to show that
\[\mathrm{Ind}^{G_{m+2a}}_{P_{(a-t,1-t_1)}} 
\left( \mu^{-1} \left( |\cdot|^{\frac{1-t}{2}}\cdot (\sigma |\cdot|^z)^{(t)} \boxtimes  |\cdot|^{\frac{1-t_1}{2}}\cdot (\lambda_1)^{(t_1)}\right) \boxtimes J_{\psi_{r-1-t-t_1}}(\pi_{1,1} \otimes \Omega_{W_{m+2(t+t_1-1)}}) \right)
\]
has no irreducible subquotient $\Sigma^{\vee}$ for almost all $q^{-z}$.

If we continue in this way, we are reduced to show that
\[\mathrm{Ind}^{G_{m+2a}}_{P_{(a-t,1-t_1,\cdots,1-t_k)}} 
\left(  \mu^{-1}\left( |\cdot|^{\frac{1-t}{2}}\cdot (\sigma |\cdot|^z)^{(t)} \boxtimes  |\cdot|^{\frac{1-t_1}{2}}\cdot (\lambda_1)^{(t_1)} \boxtimes \cdots \boxtimes  |\cdot|^{\frac{1-t_k}{2}}\cdot (\lambda_k)^{(t_k)}\right) \boxtimes J_{\psi_{0}}(\pi_{1,k} \otimes \Omega_{W_{m+2(r-1-k)}}) \right)
\]
has no irreducible subquotient $\Sigma^{\vee}$ for almost all $q^{-z}$.

To compute $J_{\psi_{0}}(\pi_{1,k} \otimes \Omega_{W_{m+2(r-1-k)}})$, write $\tau_k=\Ind_{B_{[\frac{n}{2}]-k-1}}^{GL_{[\frac{n}{2}]-k-1}}(\lambda_{k+1} \boxtimes \cdots \boxtimes \lambda_{[\frac{n}{2}]-1})$. We apply \cite[Theorem 6.1]{gjr} with $j=[\frac{n}{2}]-k-1, \tau=\tau_k$ and $l=0$. Then by \cite[Proposition 6.6]{gjr} and \cite[Proposition 6.7]{gjr}, $J_{\psi_{0}}(\pi_{1,k} \otimes \Omega_{W_{m+2(r-1-k)}})$ is decomposed as
\begin{equation} \label{e6} \begin{aligned}
&\Ind_{P_{(1,\cdots,1)}}^{G_{n-2k-2}}\left(\mu^{-1} |\cdot|^{\frac{1}{2}}\cdot (\lambda_{k+1} \boxtimes \cdots \boxtimes \lambda_{[\frac{n}{2}]-1})\boxtimes J_{\psi_{0}}(\Ind_{B_2'}^{G_2} \lambda_{[\frac{n}{2}]}  \otimes \Omega_{W_{0}})\right) \\
 &\bigoplus_{k+1\le s \le [\frac{n}{2}]-1} \Ind_{P_{(1,\cdots,1)}}^{G_{n-2k-2}}\left(\mu^{-1} |\cdot|^{\frac{1}{2}}\cdot( \lambda_{k+1} \boxtimes \cdots \boxtimes \lambda_{s-1} \boxtimes  \lambda_{s+1} \boxtimes \cdots \boxtimes \lambda_{[\frac{n}{2}]-1})\boxtimes (\Ind_{B_2'}^{G_2} \lambda_{[\frac{n}{2}]} \otimes \omega_{W_2})\right).
\end{aligned}
\end{equation}

Using arguments similar to \cite[Lemma 5.2]{gi1}, we can show that $\Ind_{B_2'}^{G_2} \lambda_{[\frac{n}{2}]}$ is irreducible. Further, it is $\psi$-generic by \cite[Theorem 1.1]{Mu}. Since $\Omega_{W_0}=\psi^{-1}$, we see that $J_{\psi_{0}}(\Ind_{B_2'}^{G_2} \lambda_{[\frac{n}{2}]}  \otimes \Omega_{W_{0}})=\textbf{1}$. Furthermore, as we have done previously, we see that \[ \Ind_{B_2'}^{G_2} \lambda_{[\frac{n}{2}]} \otimes \omega_{W_2}=\Ind_{B_2'}^{G_2}(\mu^{-1} |\cdot|^{\frac{1}{2}}\cdot \lambda_{[\frac{n}{2}]}).\] Therefore, \cite[Theorem 2.9]{BZ} implies that \[\mathrm{Ind}^{G_{m+2a}}_{P_{(a-t,1-t_1,\cdots,1-t_k)}} 
\left(  \mu^{-1} \left( |\cdot|^{\frac{1-t}{2}}\cdot (\sigma |\cdot|^z)^{(t)} \boxtimes  |\cdot|^{\frac{1-t_1}{2}}\cdot (\lambda_1)^{(t_1)} \boxtimes \cdots \boxtimes  |\cdot|^{\frac{1-t_k}{2}}\cdot (\lambda_k)^{(t_k)}\right) \boxtimes (\ref{e6}) \right)
\]
has no irreducible subquotient $\Sigma^{\vee}$ for almost all $q^{-z}$ and it proves our claim.

From what we have discussed so far, we see that
\[\mathcal{FJ}(E(\phi',z), \vi,f)=0\] at least when $\Re(z) \gg 0$. 
Since $z \mapsto \mathcal{FJ}(E(\phi',z), \vi,f)$ is meromorphic, $\mathcal{FJ}(E(\phi',z), \vi,f)$ is identically zero.
\end{proof}


From now on, we simply write $P$ (resp. $P_a,M_a, K_a$) for $P_{n+2a,a}$ (resp. $P_{m+2a,a},M_{m+2a,a}, K_{m+2a}$). Note that $N_{n+2a,r}\simeq  (\Hom(Y_a,X) \times \Hom(Y_a^*,X)) \rtimes N_{n,r}$ and $N_{n+2a,r} \cap P= \Hom(Y_a^*,X) \rtimes N_{n,r}$. Denote $\Hom(Y_a,X) \times \Hom(Y_a^*,X)$ by $V$ and let $dv$ be the Haar measure of $V$ such that $dn'=dvdn$, where $dn'$ (resp. $dn$) is the Haar measures of $N_{n+2a,r}$ (resp. $N_{n,r}$). The following remark is an easy observation which will be used in the proof of Lemma \ref{l2},
\begin{rem}\label{rJ}
For $x \in M_{m+2a,a}$, $xV x^{-1}=V$ and the Jacobian $\frac{d(xvx^{-1})}{dv}=1$. 
\end{rem}

\begin{lem}\label{l2} 
With the same notation as in Lemma \ref{l1}, assume $\sigma \simeq \sigma_i$ for some $1 \le i \le t$. 
If $ \phi\in \As_{P}^{\mu \cdot \sigma^{\vee}\boxtimes\pi_1}(G_{n+2a}), \vi\in \mathcal{E}^1(\sigma,\pi_2)$ 
and $f\in \nu_{W_{m+2a}}$, then
\begin{align*}
\notag
&\mathcal{FJ}_{\psi,\mu}(\mathcal{E}^0(\phi), \varphi,f)
=
\\
& \int_{K_a}\int_{V(\A_F)} \left( \int_{M_{a}(F)\bs M_{a}(\A_F)^1}  \left( \int_{[N_{n,r}]}\phi(nmvk) \cdot \Theta_{P_{a}}(f)\left((n,m)\cdot (v,k)\right) dn  \right) \varphi_{P_{a}}(mk) dm \right) \cdot e^{\frac{H_{P}(vk)}{2}}   dv dk.
\end{align*}
\end{lem}

\begin{proof}The proof is similar with \cite[Lemma 6.5]{Y}.
Using the residue theorem, write 
\[
\mathcal{E}^0(\phi)(g)
=\frac{1}{2\pi i} \int_{|z-\frac{1}{2}|=\epsilon}E(g,\phi,z)dz
\] 
for some small $\epsilon>0$. By employing arguments similar to those introduced by Arthur in \cite[pp. 47-48]{Af}, we can interchange the complex line integral with the mixed truncation operator. Then by Fubini's theorem, we have 
\begin{align}
&\int_{G_{m+2a}(F) \bs G_{m+2a}(\A_F)} 
\Lambda_m^T(\mathcal{E}^0(\phi) \otimes \Theta_{W_{m+2a}}(f))(g) \cdot \varphi(g)dg  \label{zc}
\\&=
\frac{1}{2\pi i }\cdot \int_{|z-\frac{1}{2}|=\epsilon} \int_{G_{m+2a}(F) \bs G_{m+2a}(\A_F)} 
\Lambda_m^T(E(\phi,z) \otimes \Theta_{W_{m+2a}}(f))(g) \cdot \varphi(g)dg dz  \nonumber
\\&=
\lim_{z\to \frac{1}{2}}\left(z-\frac{1}{2}\right) \cdot\int_{G_{m+2a}(F) \bs G_{m+2a}(\A_F)} 
\Lambda_m^T(E(\phi,z) \otimes \Theta_{W_{m+2a}}(f))(g) \cdot \varphi(g) dg.  \nonumber
\end{align}
Since $ \mathcal{FJ}_{\psi,\mu}(\mathcal{E}^0(\phi), \varphi,f)$ is the zero coefficient of the left-hand side, 
we compute the zero coefficient of the right-hand side. 
By Proposition \ref{dec2} and Lemma \ref{l1}, 
\begin{align*}
&\int_{G_{m+2a}(F) \bs G_{m+2a}(\A_F)} \Lambda_m^T\left( E(\phi,z) \otimes \Theta_{W_{m+a}}(f)\right)(g) \cdot \varphi(g) dg
\\&=
-\int_{P_{a}(F)\bs G_{m+2a}(\A_F)}^{*} 
\left(E(\phi,z) \otimes \Theta_{W_{m+2a}}(f)\right)_{P_{a},N_{n+2a,r}} \cdot  \varphi_{P_{a}}(g)\cdot \hat{\tau}_{P_{a}}(H_{P_{a}}(g)-T)dg.
\end{align*}
Note that $E_{P}(g,\phi,z)=\phi(g)d(g)^z+(M(z)\phi)(g)d(g)^{-z}$, where $M(z)$ is the relevant intertwining operator. Therefore,
\begin{align*}
&\int_{[N_{n+2a,r}]}E_{P}(\phi,z)(lg) \cdot \Theta_{P_{a}}(f)(l,g)  dl 
\\&= \int_{[N_{n+2a,r}]}\phi(lg) \cdot \Theta_{P_{a}}(f)(l,g)  \cdot d(lg)^z  dl + \int_{[N_{n+2a,r}]} (M(z)\phi)(lg) \cdot \Theta_{P_{a}}(f)(l,g) \cdot d(lg)^{-z} dl.
\end{align*}
Since the second term does not contribute to the zero coefficient in (\ref{zc}), we only consider the first term. 


Then by Lemma \ref{l0} and Remark \ref{rJ},
\begin{align*}
& -\int_{P_{a}(F)\bs G_{m+2a}(\A_F)}^{*}\left( \int_{[N_{n+2a,r}]}\phi(lg) \cdot \Theta_{P_{a}}(f)(l,g)  \cdot d(lg)^z  dl \right)\cdot \varphi_{P_{a}}(g) \cdot \hat{\tau}_{P_{a}}(H_{P_a}(g)-T)dg
\\& =-\int_{K_a} \int_{M_{a}(F)\bs M_{a}(\A_F)^1} \int_{T}^{\infty} \int_{[V]}\int_{[ N_{n,r}]}\phi(nvmk) \cdot \Theta_{P_{a}}(f)(nv,mk)   e^{z(X+H_{P}(vmk))}    \varphi_{P_{a}}(mk)   \cdot e^{-\frac{X}{2}} dn dv dX dm dk 
\\& =-\int_{T}^{\infty}e^{(z-\frac{1}{2})X}dX \cdot \int_{K_a}  \int_{M_{a}(F)\bs M_{a}(\A_F)^1}  \int_{[V]} \int_{[N_{n,r}]}\phi(nmvk) \cdot \Theta_{P_{a}}(f)\left((n,m)(v,k)\right)  \varphi_{P_{a}}(mk)   e^{zH_{P}(vk)}   dn dv dm dk.
\end{align*}
Since 
\[
-\int_{T}^{\infty}e^{(z-\frac{1}{2})X}dX =\frac{e^{(z-\frac{1}{2})T}}{z-\frac{1}{2}},
\] 
the zero coefficient of 
\[ 
\lim_{z\to \frac{1}{2}}\left(z-\frac{1}{2}\right) 
\cdot\int_{G_{m+2a}(F) \bs G_{m+2a}(\A_F)} \Lambda_m^T(E(g,\phi,z) \otimes \Theta_{W_{m+2a}}(f))(g) \cdot  \varphi(g) dg 
\] 
is equal to
\begin{align*}
& \lim_{z\to \frac{1}{2}}\left(z-\frac{1}{2}\right) \cdot \mathcal{P}(E(\phi,z) , \Theta_{W_{m+2a}}(f), \varphi)  \quad +
\\&  \int_{K_a}\int_{[V]} \left( \int_{M_{a}(F)\bs M_{a}(\A_F)^1}  \left( \int_{[N_{n,r}]}\phi(nmvk) \cdot \Theta_{P_{a}}(f)\left((n,m)\cdot(v,k)\right) dn  \right) \varphi_{P_{a}}(mk) dm \right) \cdot e^{\frac{H_{P}(vk)}{2}}   dv dk.
\end{align*}
Since $\mathcal{P}(E(\phi,z) , \Theta_{W_{m+2a}}(f), \varphi)=0$ by Lemma \ref{l1}, it completes the proof.
\end{proof}

\begin{lem}\label{l3}
With the same notation as in Lemma \ref{l1}, 
we assume $\sigma \simeq \sigma_i$ for some $1 \le i \le t$. Then the following conditions are equivalent.
\begin{enumerate}
\item
The Fourier--Jacobi period functional $\mathcal{FJ}_{\psi,\mu}(\pi_1, \pi_2,\nu_{W_m})$ is nonzero 

\item There exist $ \phi\in \As_{P_a}^{\mu \cdot \sigma^{\vee}\boxtimes\pi_1}(G_{m+2a}), \vi\in \mathcal{E}^1(\sigma,\pi_2)$ and $f\in \nu_{W_{m+2a}}$ such that 
\begin{equation} \label{rec}
\int_{K_a}\int_{[V]} \left( \int_{M_{a}(F)\bs M_{a}(\A_F)^1}  \left( \int_{[N_{n,r}]}\phi(nmvk) \cdot \Theta_{P_{a}}(f)\left((n,m)\cdot (v,k)\right) dn  \right) \varphi_{P_{a}}(mk) dm \right) \cdot e^{\frac{H_{P}(vk)}{2}}dvdk  \ne 0
\end{equation}
\end{enumerate}
\end{lem}

\bigbreak

\begin{rem}\label{ir}The above lemma seems similar to \cite[Proposition 5.3]{GJR}, which appears in the proof of one direction of the global GGP conjecture for the metaplectic-symplectic case. Experts discovered a technical gap in the proof of \cite[Proposition 5.3]{GJR} when the co-rank is not zero. To be more precise, they mistakenly assumed that the image of a certain residual intertwining operator would be the full induced representation, and consequently, they chose a vector in the image of the operator with good support. However, since the image of such an operator is not the full induced representation, it becomes highly non-trivial to choose a good vector within the operator's image. To circumvent such difficulty, we employed the isobaric sum of $BC(\pi_2)$ instead of $BC(\pi_1)$ in Lemma \ref{l1}. This approach has an advantage and a disadvantage. The advantage is that it eliminates the need to choose a special vector within the image of the residual intertwining operator in the proof of Lemma \ref{l3}. The disadvantage is that, compared to the approach using the isobaric sum of $BC(\pi_1)$, the proof of Lemma \ref{l1} becomes more complex because one must explicitly compute the relevant twisted Jacquet-modules corresponding to Fourier-Jacobi characters, even for non-first occurrences. Having proven Lemma \ref{l1}, we can then demonstrate the following lemma with reasonable ease by utilizing the inductive structure of the constant terms of theta series.
\end{rem}


\begin{proof} 
The proof of the (i) $\to$ (ii) direction essentially follows the reasoning presented in the proof of \cite[Lemma 6.5]{Y}. However, our situation is slightly more complex due to the inclusion of an integration over $V$. As a result, a more meticulous treatment becomes necessary to effectively manage this additional integral.

\noindent Put 
\[ 
\Pi=\mu  |\cdot|_E^{\frac{1}{2}}\cdot \sigma^{\vee} \boxtimes \pi_1, 
\quad
\Pi'=\sigma |\cdot|_E^{-1} \boxtimes \pi_2, 
\quad
\Pi''=\mu^{-1} |\cdot|_E^{\frac{1}{2}} \boxtimes \nu_{\psi^{-1},\mu^{-1},W_m}.
\] 
We define a functional on $\Pi \boxtimes \Pi' \boxtimes \Pi''$ by 
\[
l(\eta \boxtimes \eta' \boxtimes \eta'')
=\int_{M_{a}(F)\bs M_{a}(\A_F)^1}\left(\int_{[N_{n,r}]}\eta(nm)\eta''\left((n,m)\right)dn\right) \eta'(m) dm.
\]
\par

Let $(\Pi \boxtimes \Pi' \boxtimes \Pi'')^{\infty}$ be 
the canonical Casselman-Wallach globalization of $\Pi \boxtimes \Pi' \boxtimes \Pi''$ 
realized in the space of smooth automorphic forms 
without the $K_{M_{n+2a}} \times K_{M_{m+2a}} \times K_{M_{m+2a}}$-finiteness condition, 
where $K_{M_{i+2a}}=K_{i+2a} \cap M_{i+2a,a}$ for $i=n,m$ 
(cf. \cite{Ca}, \cite[Chapter 11]{Wa}). 
Since cusp forms are bounded, 
$l$ can be uniquely extended to a continuous functional on $(\Pi \boxtimes \Pi' \boxtimes \Pi'')^{\infty}$ 
and denote it by the same notation. 
Our assumption enables us to choose $\eta \in \Pi, \eta^{\prime} \in \Pi^{\prime}$ and $\eta^{\prime \prime} \in \Pi''$ 
so that $l(\eta \boxtimes \eta' \boxtimes \eta'')\ne 0$. 
We may assume that $\eta, \eta'$ and $\eta''$ are pure tensors. 
By \cite{LS,BS}, the functional $l$ is a product of local functionals 
$l_v \in \mathrm{Hom}_{M_{a,v}}
((\Pi_{v} \boxtimes \Pi^{\prime}_v \boxtimes \Pi^{\prime \prime}_v)^{\infty},\mathbb{C})$, 
where we set 
$(\Pi_v \boxtimes \Pi_v^{\prime} \boxtimes \Pi_v^{\prime \prime})^{\infty}
=\Pi_v \boxtimes \Pi_v^{\prime} \boxtimes \Pi_v^{\prime \prime}$ 
if $v$ is finite. 
Then we have $l_v(\eta_v \boxtimes \eta_v' \boxtimes \eta_v'')\ne 0$.
\par

Denote by $1$ (resp. $e$) the identity element of $G_{n+2a}$ (resp. $G_{m+2a}$).  Choose $\varphi' \in \mathcal{E}^1(\sigma,\pi_2)$ and $\ f \in \nu_{W_{m+2a}}$ such that 
\begin{enumerate}
\item 
$\delta_{P_{a}}^{-\frac{1}{2}}\cdot \varphi'_{P_{a}}=\boxtimes_v \varphi'_v, \ 
\Theta_{P_{a}}^{\psi}(f)=\boxtimes_v f_v$;
\item 
$\varphi'_v(e)=\eta_v$, $f_v((1,e))=\eta_v''$.
\end{enumerate}
(We can choose such $f$ by Remark \ref{eval} and Remark \ref{ind}.)  \par
\noindent Let $U_{n+2a,a}^{-}$ be the unipotent radical of the opposite parabolic subgroup $P^-$ of  $P$ and $\alpha_v$ be a smooth function on $U_{n+2a,a,v}^{-}$, which has a compact support. We can define a section $\phi_v$ by requiring 
\[
\phi_v(muu_{-})
=\delta_{P_{v}}(m)^{\frac{1}{2}}\alpha{_v} (u_{-})\cdot(\Pi_v)^{\infty}(m)\eta_v, 
\quad m\in M_{n+2a,a,v},  u \in U_{n+2a,a,v}, u_{-} \in U_{n+2a,a,v}^{-}.
\] 


Since $N_{n+2a,r}\simeq (\Hom(Y_a,X) \times \Hom(Y_a^*,X)) \rtimes N_{n,r}$, we regard elements of $\Hom(Y_a,X)$ and $\Hom(Y_a^*,X)$ as elements of  $N_{n+2a,r}$ via the natural inclusion maps. By Remark \ref{eval} and (\ref{w6}), for every $p_1 \in \Hom(Y_a,X)_v$, \[f_v((p_1,e))=
f_v((1,e))=\eta_v''.\] Using the Iwasawa decomposition of $G_{n+2a,v}$ with respect to $P_{v}$, it is easy to check that $\Hom(Y_a,X)_v$ is contained in $U_{n+2a,a,v}^{-}$. By taking the support of $\alpha_v$ sufficiently small near $e$, we have
\[
\int_{\Hom(Y_a,X)_v}l_v
\left(  \phi_v(p_1) \boxtimes  \eta'_v
  \boxtimes f_v\left((p_1,e)
\right) 
\right) \cdot e^{\frac{H_{P}(p_1)}{2}}  dp_1= \int_{\Hom(Y_a,X)_v}l_v
\left(  \phi_v(p_1) \boxtimes  \eta'_v   \boxtimes \eta_v''
\right) \cdot e^{\frac{H_{P}(p_1)}{2}}   dp_1\ne 0.
\]
On the other hand, there is a small neighborhood $N_v$ of $0$ in $\Hom(Y_a^*,X)_v$ such that 
\[\int_{\Hom(Y_a^*,X)_v}\int_{\Hom(Y_a,X)_v}l_v\left( \phi_v(p_1p_2) \boxtimes  \eta'_v
  \boxtimes f_v\left((p_1p_2,e)
\right)   \right) \cdot e^{\frac{H_{P}(p_1p_2)}{2}}   \cdot \chi_{N_v}(p_2)dp_1 dp_2 \ne 0.
\]
(Here, $\chi_{N_v}$ is the characteristic function on $N_v$.)
\par

By taking the support of the Schwartz function $f$ sufficiently small neighborhood of $0 \in Y_a^*$, 
we may assume that the support of $f_{v}$ is contained in $N_v$. 

Then 
\begin{align}
\label{ii} 
&\int_{\Hom(Y_a^*,X)_v}\int_{\Hom(Y_a,X)_v}l_v\left(\phi_v(p_1p_2) \boxtimes  \eta'_v
  \boxtimes f_v(\left(p_1p_2,e)\right)   \right) \cdot e^{\frac{H_{P}(p_1p_2)}{2}}  dp_1 dp_2
\\\notag
&=\int_{\Hom(Y_a^*,X)_v}\int_{\Hom(Y_a,X)_v}l_v\left( \phi_v(p_1p_2) \boxtimes  \eta'_v
  \boxtimes f_v\left((p_1p_2,e)\right)  \right)  \cdot e^{\frac{H_{P}(p_1p_2)}{2}}   \cdot \chi_{N_v}(p_2)dp_1 dp_2,
\end{align}
and so (\ref{ii}) is nonzero.
 \par
 
Put 
\[
I(\phi_v)=\int_{K_{a,v}} \int_{\Hom(Y_a^*,X)_v}\int_{\Hom(Y_a,X)_v}l_v
\left( \phi_v(p_1p_2k) \boxtimes  \varphi_v(k)
  \boxtimes f_v\left((p_1p_2,k)
\right) \right) \cdot e^{\frac{H_{P}(p_1p_2k)}{2}}   
dp_1 dp_2dk.
\] 
Since $P_{a,v}\cdot U_{a,v}^{-}$ is an open dense subset of $G_{m+2a,v}$, 
we can rewrite the local integral as 
\[
I(\phi_v)
=\int_{U_{a,v}^{-}} \int_{\Hom(Y_a^*,X)_v}\int_{\Hom(Y_a,X)_v}
\alpha_v(u_{-}) l_v
\left( \phi_v(p_1p_2u_{-}) \boxtimes  \eta_v'
  \boxtimes f_v\left((p_1p_2,u_{-})
\right)\right) \cdot e^{\frac{H_{P}(p_1p_2u_{-})}{2}}   
dp_1 dp_2 du_{-}.
\]
We can choose $\alpha_v$ to be supported in a small neighborhood of $e$ so that $I(\phi_v) \ne 0$. 
Since $I$ is continuous and $K_{n+2a,v}$-finite vectors are dense in the induced representation, 
we can choose $K_{n+2a,v}$-finite function $\phi_v$ such that $I(\phi_v)\ne 0$. This completes the proof of the (i) $\to$ (ii) direction. 

The proof of (ii) $\to$ (i) direction is almost immediate. From what we have seen in the above, we see that the Fourier--Jacobi period integral $\mathcal{FJ}_{\psi,\mu}(\pi_1, \pi_2,\nu_{W_m})$ is just the inner period integral in (\ref{rec}). Therefore, if $\mathcal{FJ}_{\psi,\mu}(\pi_1,\pi_2,\nu_{W_m})=0$, the integral (\ref{rec}) is always zero.
\end{proof}

By combining Lemma \ref{l2} with Lemma \ref{l3}, we get the following reciprocal non-vanishing theorem.

\begin{thm}\label{rthm} With the same notation as in Lemma \ref{l1}, assume $\sigma \simeq \sigma_i$ for some $1 \le i \le t$. 
Then $\mathcal{FJ}_{\psi,\mu}(\pi_1,\pi_2,\nu_{W_m}) \ne 0$ is equivalent to $\mathcal{FJ}_{\psi,\mu}(\mathcal{E}^0(\mu \cdot \sigma^{\vee},\pi_1), \mathcal{E}^1(\sigma,\pi_2),\nu_{W_{m+2a}}) \ne 0$.

\end{thm}
\bigbreak


\section{\textbf{Proof of the Main Theorem}}

\begin{proof}
We know that $BC(\pi_2)$ is an isobaric sum of the form $\sigma_1 \boxplus \cdots \boxplus \sigma_t$, 
where $\sigma_1,\dots,\sigma_t$ are distinct irreducible unitary cuspidal automorphic representations of general linear groups 
such that the (twisted) Asai $L$-function $L(s,\sigma_i,As^{(-1)^{m-1}})$ has a pole at $s=1$. 
Then for each $1 \le i \le t$,  $L(s,\mu^{-1}\cdot  \sigma_i,As^{(-1)^{m}})$ has a pole at $s=1$. 
On the other hand, $\mathcal{E}^0(\mu\cdot \sigma_i^{\vee},\pi_1)$ is nonzero by Theorem \ref{rthm}. 
Thus by Proposition \ref{p1}, we have $L(\frac{1}{2},BC(\pi_1^{\vee})\times \mu \cdot \sigma_i^{\vee})\ne 0$ 
and so $L(\frac{1}{2},BC(\pi_1)\times \mu^{-1}\sigma_i)\ne 0$ by the functional equation. 
Thus 
\[
L\left(\frac{1}{2},BC(\pi_1) \times BC(\pi_2)\otimes \mu^{-1}\right) 
= \prod_{i=1}^tL\left(\frac{1}{2},BC(\pi_1 )\times \mu^{-1}\sigma_i\right)\ne 0 
\] 
and so our claim is proved.
\end{proof}

\end{document}